\documentclass[11pt]{article}

\usepackage[margin=1.3in]{geometry}
\usepackage{mathtools}
\usepackage[dvips]{graphics}
\usepackage{epsfig}
\usepackage{multirow}
\usepackage{array}
\usepackage{extarrows}
\usepackage{graphicx}
\mathtoolsset{showonlyrefs}
\usepackage{algorithm}
\usepackage{algorithmic}
\usepackage[symbol]{footmisc}

\newcommand{\INPUT}{\REQUIRE}
\newcommand{\OUTPUT}{\ENSURE}

\usepackage{comment}
\usepackage[titletoc,title]{appendix}
\numberwithin{equation}{section}
\usepackage{amsthm}
\usepackage{enumerate}
\usepackage{amsmath}

\usepackage{amssymb,amsmath}
\usepackage{latexsym}
\usepackage[usenames]{xcolor}
\usepackage{pgf}
\usepackage[numbers]{natbib}

\usepackage{hyperref}
\hypersetup{
    colorlinks,%
    citecolor=black,%
    filecolor=black,%
    linkcolor=black,%
    urlcolor=black
}
\usepackage{subfig}
\captionsetup{font=small,belowskip=-8pt,aboveskip=4pt}



\numberwithin{equation}{section}

\newtheorem{theorem}{ \noindent T{\footnotesize HEOREM}}
\newtheorem{prop}{ \noindent P{\footnotesize ROPOSITION}}[section]
\newtheorem{lemma}{ \noindent L{\footnotesize EMMA}}[section]
\newtheorem{coro}{ \noindent C{\footnotesize OROLLARY}}
\newtheorem{assumption}{ \noindent A{\footnotesize SSUMPTION}}
\newtheorem{remark}{ \noindent R{\footnotesize EMARK}}[section]

\newcommand{\E}{\mathbb{E}}

\newcommand{\R}{\mathbb{R}}

\newcommand{\T}{\top}

\begin{document}

\title{A note on the improved sparse Hanson-Wright inequalities}
\author{Guozheng Dai$^\ddagger$, Yiyun He$^*$, Ke Wang$^\dagger$ and Yizhe Zhu$^\S$}

\date{}
\maketitle

\begin{abstract}
We establish sparse Hanson–Wright inequalities for quadratic forms of sparse $\alpha$-sub-exponential random vectors with exponent parameter $\alpha\in(0, 2]$.  In the regime $0< \alpha\le 1$ we derive a refined inequality that is optimal in several canonical models.  These results extend the classical Hanson–Wright bound to the sparse setting.  Illustrative applications include covariance matrix estimation with incomplete observations, low-rank matrix approximation under the maximum norm with sparsified sketches, and concentration inequalities for sparse $\alpha$-sub-exponential random vectors.
\end{abstract}

\footnotetext[3]{Department of Mathematics, Hong Kong University of Science and Technology, Clear Water Bay, Kowloon, Hong Kong, guozhengdai@ust.hk. }
\footnotetext[1]{Department of Mathematics, University of California San Diego, La Jolla, CA 92093, USA, yih@ucsd.edu.}
\footnotetext[2]{Department of Mathematics, Hong Kong University of Science and Technology, Clear Water Bay, Kowloon, Hong Kong, kewang@ust.hk. }
\footnotetext[4]{Department of Mathematics, University of Southern California, 3620 Vermont Avenue
Los Angeles, CA 90089, USA, yizhezhu@usc.edu. }

\bigskip

\section{Introduction}

The Hanson-Wright  inequality quantifies the concentration of a random quadratic form around its mean. Introduced in the 1970s \cite{Hanson_AMS}, it is now a standard device for covariance estimation, random-matrix analysis, empirical-process bounds and high-dimensional regression.

Let $\xi=(\xi_{1}, \cdots, \xi_{n})^{\top}$ be a random vector with independent centered entries and $A=(a_{ij})_{n\times n}$ be a fixed matrix. The Hanson-Wright inequality is to explore the concentration properties of the following quadratic random variables
 $$S_{A}(\xi):=\xi^\top A\xi=\sum_{i,j}a_{ij}\xi_{i}\xi_{j}.$$ 
 A well-known result, proved by Hanson and Wright \cite{Hanson_AMS}, claims that if $\xi_{i}$ are independent centered $2$-sub-exponential (sub-gaussian) random variables such that $\max_{i}\Vert \xi_{i}\Vert_{\Psi_{2}}\le L$, then for $t\ge 0$ (the following version was provided in \cite{Rudelson_ECP})

\begin{align}\label{Eq_Intro_HW_subgaussian}
	\mathbb{P}\big\{ \vert S_{A}(\xi)-\mathbb{E}S_{A}(\xi) \vert\ge t   \big\}\le 2\exp\Big(-c\min\Big\{\frac{t^{2}}{L^{4}\Vert A\Vert_{F}^{2}}, \frac{t}{L^{2}\Vert A\Vert_{l_{2}\to l_{2}}}   \Big\}   \Big),
\end{align}
where $\Vert \cdot\Vert_{F}$ and $\Vert\cdot\Vert_{l_{2}\to l_{2}}$ are the Frobenius norm and the spectral norm of a matrix respectively. Recall that a random variable $\eta_{1}$ is $\alpha$-sub-exponential if satisfying
\begin{align}
	\mathbb{P}\{\vert\eta_{1}\vert\ge t   \}\le 2\exp\big(-\frac{t^{\alpha}}{K^{\alpha}}\big),\quad t\ge 0.\nonumber
\end{align}
Its $\alpha$-sub-exponential norm is defined as 
\begin{align}
	\Vert \eta_{1}\Vert_{\Psi_{\alpha}}:=\inf \big\{ t>0:  \mathbb{E}\exp\big(\frac{\vert \eta_{1}\vert^{\alpha}}{t^{\alpha}}   \big)\le 2   \big\} .\nonumber
\end{align}

There is a lot of work dedicated to extending the classical Hanson-Wright inequality \eqref{Eq_Intro_HW_subgaussian} to more general cases. For example,  assume that $\xi_{1}, \cdots, \xi_{n}$ are independent centered $\alpha$-sub-exponential  variables and satisfy $\max_{i}\Vert \xi_{i}\Vert_{\Psi_{\alpha}}\le  L(\alpha)$. When $1\le \alpha\le 2$, Adamczak and Lata{\l}a \cite{Adamczak_AIHP}  proved for $t\ge 0$ (see also \cite{Adamczak_PTRF})
\begin{align}\label{Eq_intro_logconcave}
	\mathbb{P}\{\vert S_{A}(\xi)-\mathbb{E}S_{A}(\xi) \vert\ge L(\alpha)^{2}t    \}\le 2\exp(-c(\alpha)f_{1}(t)),
\end{align}
where 
\begin{align}\label{Eq_intro_f1}
	f_{1}(t)=\min\Big\{\Big( \frac{t}{\Vert A\Vert_{F}}\Big)^{2}, 
	& \frac{t}{\Vert A\Vert_{l_{2}\to l_{2}} }, \Big( \frac{t}{\Vert A\Vert_{l_{\alpha^{*}}(l_{2})}} \Big)^{\alpha},\nonumber\\
	&\Big( \frac{t}{\Vert A\Vert_{l_{2}\to l_{\alpha^{*}}}} \Big)^{\frac{2\alpha}{2+\alpha}}, \Big(\frac{t}{\Vert A\Vert_{l_{\alpha}\to l_{\alpha^{*}}}}\Big)^{\frac{\alpha}{2}}\Big\}.
\end{align}
One can refer to \eqref{matrix_norm} below for the definitions of the matrix's norms and $\alpha^{*}=\alpha/(\alpha-1)$.
When $0<\alpha\le 1$, Kolesko and Lata{\l}a \cite{Kolesko_SPL} proved for $t\ge 0$ (see also \cite{Gotze_EJP})
\begin{align}\label{Eq_intro_logconvex}
		\mathbb{P}\{\vert S_{A}(\xi)-\mathbb{E}S_{A}(\xi) \vert\ge L(\alpha)^{2}t    \}\le 2\exp(-c(\alpha)f_{2}(t)),
\end{align}
	where 
	\begin{align}\label{Eq_intro_f2}
		f_{2}(t)=\min\Big\{ \Big( \frac{t}{\Vert A\Vert_{F}}\Big)^{2}, 
		\frac{t}{\Vert A\Vert_{l_{2}\to l_{2}} }, \Big( \frac{t}{\Vert A\Vert_{l_2\to l_{\infty}}} \Big)^{\frac{2\alpha}{2+\alpha}}, \Big(\frac{t}{\Vert A\Vert_{\infty}}\Big)^{\frac{\alpha}{2}}\Big\}.
	\end{align}
As we have seen above, the Hanson-Wright inequalities \eqref{Eq_intro_logconcave} and \eqref{Eq_intro_logconvex} are quite intricate.  By evaluating the family of norms used therein,
Sambale \cite{Sambale_Progress_probability} obtained a simplified version. This simplified version is not only easily calculable but also sufficient for many applications (see \cite{Sambale_Progress_probability} for details). In particular, Sambale proved for  $0<\alpha\le 2$ and $t\ge 0$
\begin{align}\label{Eq_Intro_Sambale_HW}
	&\mathbb{P}\big\{ \vert S_{A}(\xi)-\mathbb{E}S_{A}(\xi) \vert\ge t   \big\}\nonumber\\
	\le& 2\exp\Big(-c(\alpha)\min\Big\{\frac{t^{2}}{L(\alpha)^{4}\Vert A\Vert_{F}^{2}}, \Big(\frac{t}{L(\alpha)^{2}\Vert A\Vert_{l_{2}\to l_{2}}}\Big)^{\alpha/2}   \Big\}   \Big).
\end{align}

Motivated by the covariance estimation problem in the matrix variate model, Zhou \cite{Zhou_Bernoulli} showed a sparse Hanson-Wright type inequality for sub-gaussian random variables. In particular, assume that $\{\xi_{i}=\delta_{i}\cdot\zeta_{i}, 1\le i\le n\}$ is a sequence of independent random variables, where $\delta_{i}\sim\textnormal{Bernoulli}(p_{i})$ (that is, taking values $0, 1$ with probability $1-p_{i}$ and $p_{i}$ respectively) and $\zeta_{i}$ is a  centered sub-gaussian variable independent of $\delta_{i}$. Zhou proved for $t\ge 0$
\begin{align}\label{Eq_intro_zhou_HW}
	\mathbb{P}\big\{ \vert S_{A}(\xi)-\mathbb{E}S_{A}(\xi) \vert\ge t   \big\}
	\le 2\exp\Big(-c\min\Big\{\frac{t^{2}}{L^{4}\gamma_{1}(A)}, \frac{t}{L^{2}\Vert A\Vert_{l_{2}\to l_{2}}}   \Big\}   \Big),
\end{align}
where $L=\max_{i}\Vert\zeta_{i}\Vert_{\Psi_{2}}$ and $\gamma_{1}(A)=\sum_{k}a_{kk}^{2}p_{k}+\sum_{i\neq j}a_{ij}^{2}p_{i}p_{j}$.

Recently, He, Wang and Zhu \cite{He_arXiv_HW} established sparse Hanson-Wright type inequalities in a more general setting. Specifically, let $\{ \xi_{i}=\delta_{i}\cdot \zeta_{i}, 1\le i\le n\}$ be a sequence of independent random variables, where $\delta_{i}\sim \textnormal{Bernoulli}(p_{i})$ and $\zeta_{i}$ is a centered $\alpha$-sub-exponential variable independent of $\delta_{i}$, $1\le i\le n$. \cite[Corollary 1]{He_arXiv_HW} showed that for $\alpha>0$ and $t\ge 0$
\begin{align}\label{Eq_Intro_He_Hw}
		&\mathbb{P}\big\{ \vert S_{A}(\xi)-\mathbb{E}S_{A}(\xi) \vert\ge t   \big\}\nonumber\\
	\le& 2\exp\Big(-c(\alpha)\min\Big\{\frac{t^{2}}{L(\alpha)^{4}\gamma_{1}(A)}, \frac{t}{L(\alpha)^{2}\gamma_{2}(A)},  \Big(\frac{t}{L(\alpha)^{2}\Vert A\Vert_{\infty}}\Big)^{\min\{\frac{\alpha}{2}, \frac{1}{2}\}}   \Big\}   \Big),
\end{align}
where $L(\alpha)=\max_{i}\Vert \zeta_{i}\Vert_{\Psi_{\alpha}}$, $\Vert A\Vert_{\infty}=\max_{ij}\vert a_{ij}\vert$, and
\begin{align}
	\gamma_{2}(A)=\max_{i}\big\{\sum_{j:j\neq i}\vert a_{ij}\vert p_{j},  \sum_{j:j\neq i}\vert a_{ji}\vert p_{j}, \vert a_{ii}\vert  \big\}.\nonumber
\end{align}
The work in \cite{He_arXiv_HW} further extends to sparse versions of Bernstein and Bennett type inequalities.

In addition to the works mentioned above, there are numerous other papers that delve into the Hanson-Wright inequality with sparse structure. We refer interested readers to \cite{CK25,GLZ00,He_aop,Park_Stat,schudy2011bernstein,schudy2012concentration,wu2023precise,Zhou_Bernoulli_2024} and the references therein for further exploration. 

\paragraph{Notation}
Given a fixed vector $x=(x_{1},\cdots, x_{n})^\top\in \mathbb{R}^{n}$,  denote by $\Vert x\Vert_{r}=(\sum_i\vert x_{i}\vert^{r})^{1/r}$ the $l_{r}$ norm. Use $\Vert \xi\Vert_{L_{r}}=(\mathbb{E}\vert \xi\vert^{r})^{1/r}$ as the $L_{r}$ norm of a random variable $\xi$.
  Let $A=(a_{ij})$ be an $m\times n$ matrix. We use the following notations of the matrix norms:
\begin{align}
	\Vert A\Vert_{F}=&\sqrt{\sum_{i,j}\vert a_{ij}\vert^{2}}, \quad \Vert A\Vert_{\infty}=\max_{i,j}\vert a_{ij}\vert, \\
    \Vert A\Vert_{l_{r}(l_{2})}&=(\sum_{i\le m}(\sum_{j\le n}\vert a_{ij}\vert^{2})^{r/2})^{1/r}, \quad  \label{matrix_norm}\\
	\Vert A\Vert_{l_{r_{1}}\to l_{r_{2}}}&=\sup \{\vert\sum_{i,j} a_{ij}x_{j}y_{i}\vert: \Vert x\Vert_{r_{1}}\le 1, \Vert y\Vert_{r_{2}^{*}} \le 1  \},\nonumber
\end{align}
where $1\le r_{1}, r_{2}<\infty$ and $ r_{2}^{*}=r_{2}/(r_{2}-1)$. Indeed, $\Vert A\Vert_{l_{2}\to l_{2}}$ is the spectral norm of $A$ and $\Vert A\Vert_{l_{1}\to l_{\infty}}=\Vert A\Vert_{\infty}$. 

Let $\xi_{1}, \cdots, \xi_{n}$ be a sequence of random variables and $f: \mathbb{R}^{n}\to \mathbb{R}$ be a measurable function. Then, $\mathbb{E}_{\xi_{i_{1}}, \cdots, \xi_{i_{s}}}f(\xi_{1}, \cdots, \xi_{n})$ means only taking expectation with respect to random variables $\xi_{i_{1}}, \cdots, \xi_{i_s}$, where $\{i_{1}, \cdots, i_{s}  \}\subset\{1,\cdots, n  \}$. Denote $\xi\sim \mathcal{W}_{s}(\alpha)$ when $\xi$ is  a symmetric Weibull variable with the scale parameter $1$ and the shape parameter $\alpha$. Specifically, $-\log \mathbb{P}\{\vert\xi\vert>x  \}=x^{\alpha}, x\ge 0$.

Unless otherwise stated,  denote by $C, C_{1}, c, c_{1},\cdots$ the universal constants independent of any parameters and the dimension $n$. Besides, let $C(\delta), c(\delta)\cdots $ be the constants depending only on the parameter $\delta$. Their values can change from line to line. For convenience, we say $f\lesssim g$ if $f\le Cg$ for some universal constant $C$. We write $f\lesssim_{\delta} g$ if $f\le C(\delta)g$ for some constant $C(\delta)$. Besides, we say $f\asymp g$ if $f\lesssim g$ and $g\lesssim f$, so does $f\asymp_{\delta} g$.

\paragraph{Organization of the paper} 

The rest of the paper is organized as follows. In the remaining part of Section 1, we will first present our main results. Then, we will draw comparisons between our results and existing ones to highlight the novelties and improvements. To conclude Section 1, we will pose two open questions that warrant further exploration.

In Section 2, we give three applications of our concentration inequalities. The applications include: (i) covariance matrix estimation from partially observed data; (ii) low-rank matrix approximation with sparsification; and (iii) concentration phenomena for sparse $\alpha$-sub-exponential random vectors. 

Section 3 is dedicated to introducing several key lemmas. These lemmas cover aspects such as the relationship between tails and moments, the decoupling inequality, the contraction principle, and concentration inequalities. They serve as fundamental tools for the subsequent theoretical derivations.

We  prove our main results in Section 4 and our applications in Section 5. In the appendix, we offer proofs for Proposition \ref{Lem_appendix_1}, Theorem \ref{Theorem_application}, Corollary \ref{Cor_Section2_quadratic_forms}, and Lemma \ref{Prop_Section3_Bernstein_sparse}.

\subsection{Main results}
This paper concentrates on the sparse Hanson-Wright inequalities in $\alpha$-sub-exponential random variables, $0<\alpha\le 2$. Our first main result in this regime reads as follows: 
\begin{theorem}\label{Theo_main1}
	Assume that $\{\delta_{i}, 1\le i\le n \}$ and $\{\zeta_{i}, 1\le i\le n \}$ are two independent sequences of random variables, where $\delta_{i}\sim\textnormal{Bernoulli}(p_{i})$ are independent and $\zeta_{i}$ are independent centered $\alpha$-sub-exponential random variables.
	Consider a random vector $\xi=(\xi_{1}, \cdots, \xi_{n})$  with $\xi_{i}=\delta_{i}\cdot\zeta_{i}$. Let $A=(a_{ij})_{n\times n}$ be a symmetric fixed matrix. Then for $0<\alpha\le 2$ and $t\ge 0$
	\begin{align}
		&\mathbb{P}\{\vert \xi^\top A\xi-\mathbb{E}\xi^\top A\xi\vert\ge L(\alpha)^{2}t  \}\nonumber\\
		\le& 2\exp\left(-c(\alpha)\min\left\{ \frac{t^{2}}{\sum_{k}a_{kk}^{2}p_{k}+\sum_{i\neq j}a_{ij}^{2}p_{i}p_{j}}, \left(\frac{t}{\Vert A\Vert_{l_{2}\to l_{2}}}\right)^{\alpha/2}  \right\}      \right),\nonumber
	\end{align}
where $L(\alpha)=\max_{i}\Vert \zeta_{i}\Vert_{\Psi_{\alpha}}$ and $c(\alpha)$ is a positive constant depending only on $\alpha$.
\end{theorem}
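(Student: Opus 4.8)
The plan is to run the moment method and then pass to tails. By the tail--moment equivalence recorded in Section~3, it suffices to establish the single $L_{p}$ estimate
\[
\Vert \xi^{\top}A\xi-\mathbb{E}\,\xi^{\top}A\xi\Vert_{L_{p}}\ \lesssim_{\alpha}\ L(\alpha)^{2}\Big(\sqrt{p\,\gamma_{1}(A)}+p^{2/\alpha}\,\Vert A\Vert_{l_{2}\to l_{2}}\Big),\qquad p\ge 1,
\]
where $\gamma_{1}(A)=\sum_{k}a_{kk}^{2}p_{k}+\sum_{i\neq j}a_{ij}^{2}p_{i}p_{j}$, because a bound of the shape $\Vert X\Vert_{L_{p}}\le a\sqrt{p}+b\,p^{2/\alpha}$ integrates exactly to the two--regime tail $2\exp(-c(\alpha)\min\{(t/a)^{2},(t/b)^{\alpha/2}\})$. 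I would split $\xi^{\top}A\xi-\mathbb{E}\,\xi^{\top}A\xi=\sum_{k}a_{kk}(\xi_{k}^{2}-\mathbb{E}\xi_{k}^{2})+\sum_{i\neq j}a_{ij}\xi_{i}\xi_{j}$ and bound the diagonal and off--diagonal moments separately, aiming the $\sqrt{p}$ part against $\gamma_{1}(A)$ and the $p^{2/\alpha}$ part against the spectral norm in each case.

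For the diagonal sum, the summands $a_{kk}(\xi_{k}^{2}-\mathbb{E}\xi_{k}^{2})$ are independent and centered, and each square $\xi_{k}^{2}=\delta_{k}\zeta_{k}^{2}$ is a sparse $\tfrac{\alpha}{2}$--sub--exponential variable with $\Vert\xi_{k}^{2}\Vert_{L_{p}}\lesssim_{\alpha}p_{k}^{1/p}\,p^{2/\alpha}L(\alpha)^{2}$ and variance at most $p_{k}\mathbb{E}\zeta_{k}^{4}\lesssim_{\alpha}p_{k}L(\alpha)^{4}$. Feeding these into the sparse Bernstein--type inequality (Lemma~\ref{Prop_Section3_Bernstein_sparse}) produces a Gaussian regime governed by the variance proxy $\sum_{k}a_{kk}^{2}p_{k}$ and a heavy--tail regime governed by $\max_{k}|a_{kk}|$; since $\max_{k}|a_{kk}|\le\Vert A\Vert_{l_{2}\to l_{2}}$, the diagonal contributes exactly the first summand of $\gamma_{1}(A)$ together with a $p^{2/\alpha}\Vert A\Vert_{l_{2}\to l_{2}}$ term.

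The off--diagonal part is the substantial one. First apply the decoupling inequality from Section~3 to replace $\sum_{i\neq j}a_{ij}\xi_{i}\xi_{j}$ (up to a constant in $L_{p}$) by the bilinear form $\sum_{i,j}a_{ij}\xi_{i}\xi_{j}'$, where $\xi'$ is an independent copy of $\xi$. Conditioning on $\xi'$ turns this into a linear form $\sum_{i}b_{i}\xi_{i}$ with $b_{i}=(A_{0}\xi')_{i}$ and $A_{0}=A-\mathrm{diag}(A)$, to which the linear--form moment bound for independent centered sparse $\alpha$--sub--exponential variables applies. The Gaussian part of that bound carries the conditional variance proxy
\[
\sum_{i}p_{i}b_{i}^{2}=\Vert DA_{0}\xi'\Vert_{2}^{2},\qquad D=\mathrm{diag}(\sqrt{p_{i}}),
\]
and taking $\mathbb{E}_{\xi'}$ of its second moment yields $\sum_{i\neq j}a_{ij}^{2}p_{i}\,\mathbb{E}\xi_{j}'^{2}\lesssim_{\alpha}L(\alpha)^{2}\sum_{i\neq j}a_{ij}^{2}p_{i}p_{j}$, i.e.\ precisely the second summand of $\gamma_{1}(A)$ (the extra factor $p_{j}$ coming from $\mathbb{E}\xi_{j}'^{2}$). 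The fluctuation of the Euclidean norm $\Vert DA_{0}\xi'\Vert_{2}$, and likewise of the maximal coefficient $\max_{i}|b_{i}|$ that drives the heavy--tail part, is controlled by the operator norm $\Vert DA_{0}\Vert_{l_{2}\to l_{2}}\le\Vert A\Vert_{l_{2}\to l_{2}}$, and therefore feeds only the spectral term.

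The \emph{main obstacle} is exactly this last point: the conditional quantities $\Vert DA_{0}\xi'\Vert_{2}$ and $\max_{i}|b_{i}|$ are norms of a linear image of an $\alpha$--sub--exponential vector, so controlling their $L_{p}$ norms requires a concentration--of--norm estimate that cleanly separates the mean (which reproduces $\gamma_{1}(A)$) from the fluctuation (which produces $\Vert A\Vert_{l_{2}\to l_{2}}$). In combining the two conditioning levels one generates cross terms scaling like $p^{1/2+1/\alpha}\Vert A\Vert_{l_{2}\to l_{2}}$; these are absorbed into the target $p^{2/\alpha}\Vert A\Vert_{l_{2}\to l_{2}}$ precisely because $\tfrac12+\tfrac1\alpha\le\tfrac2\alpha$ for $\alpha\le 2$, so the hypothesis $\alpha\le 2$ enters here in an essential way. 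A preliminary symmetrization together with the contraction principle from Section~3 lets me reduce to symmetric (indeed Weibull) variables, where the required linear--form moment bounds take their cleanest form; the remaining work is the bookkeeping that keeps every constant dependent on $\alpha$ alone and checks that each auxiliary term is dominated by the two stated ones.
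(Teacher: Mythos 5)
Your high-level architecture (reduce to a single moment bound $\sqrt{r}\sqrt{\gamma_{1}(A)}+r^{2/\alpha}\Vert A\Vert_{l_{2}\to l_{2}}$ and convert via the tail--moment lemma; handle the diagonal by symmetrization plus the sparse Bernstein inequality for the $\tfrac{\alpha}{2}$-sub-exponential squares) coincides with the paper, and that part is fine. The off-diagonal treatment, however, diverges from the paper in a way that leaves two genuine gaps. First, after decoupling you condition on the \emph{whole vector} $\xi'$ and invoke a linear-form moment bound for sparse $\alpha$-sub-exponential variables with the $p$-weighted variance proxy $\sum_{i}p_{i}b_{i}^{2}$. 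Such a bound (Lemma~\ref{Prop_Section3_Bernstein_sparse}, built on Lemma~\ref{Lem_Lp_linearsum}) exists only for $0<\alpha\le 1$: the Hitczenko--Montgomery-Smith--Oleszkiewicz lemma requires log-convex tails, and $\delta_{i}\zeta_{i}$ fails this for $\alpha>1$ because $-\log\mathbb{P}\{\vert\delta_{i}\eta_{i}\vert\ge t\}=t^{\alpha}-\log p_{i}$ has an upward jump at $t=0$ and so cannot be convex. Indeed, the sparse linear-form inequality for $1<\alpha\le 2$ (with the requisite $\Vert\cdot\Vert_{\alpha^{*}}$-type term) is exactly Open Question~1 of this paper, so your plan does not cover the range $1<\alpha\le 2$ at all. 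The paper circumvents this by conditioning only on the Bernoulli masks and applying the \emph{bilinear} Weibull moment bound (Corollary~\ref{Cor_Section2_quadratic_forms}) to $\sum a_{ij}\delta_{i}\eta_{i}\tilde{\delta}_{j}\tilde{\eta}_{j}$, so the mask-randomness enters only through $\big(\sum a_{ij}^{2}\delta_{i}\tilde{\delta}_{j}\big)^{1/2}$ (tamed by two applications of Talagrand's inequality, Lemma~\ref{Lem_concentration_inequality}) and through $\Vert(a_{ij}\delta_{i}\tilde{\delta}_{j})\Vert_{l_{2}\to l_{2}}\le\Vert A\Vert_{l_{2}\to l_{2}}$, which is deterministic.

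Second, even for $0<\alpha\le 1$ your treatment of the heavy-tail coefficient is wrong: you assert that $\max_{i}\vert b_{i}\vert=\Vert A_{0}\xi'\Vert_{\infty}$ is ``controlled by $\Vert DA_{0}\Vert_{l_{2}\to l_{2}}$,'' but convex concentration only controls its \emph{fluctuation}; its \emph{mean} is not $O(\Vert A\Vert_{l_{2}\to l_{2}})$. For instance, with $A_{0}$ a zero-diagonal permutation-type matrix ($a_{i\sigma(i)}=1$) and $p_{i}\equiv n^{-1/2}$, one has $\mathbb{E}\Vert A_{0}\xi'\Vert_{\infty}\asymp(\log n)^{1/\alpha}$ while the theorem's moment bound at constant $r$ is $\sqrt{r}\,p\sqrt{n}+r^{2/\alpha}=O(1)$; so the conditional route through a maximum necessarily produces a spurious dimension-dependent factor and cannot prove the stated inequality. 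The repair is to never form the max: in the paper's proof of Theorem~\ref{Theo_main2} (the only place where this conditional linear-form strategy is actually executed, and only for $\alpha\le 1$), the first term of Lemma~\ref{Lem_Lp_linearsum} is kept as the $p$-weighted aggregate $\big(\sum_{i}p_{i}\mathbb{E}\vert b_{i}\vert^{r}\big)^{1/r}$ and is absorbed via the Young-inequality estimate $\big(\sum_{i,j}p_{i}p_{j}\vert a_{ij}\vert^{r}\big)^{1/r}\le e^{6}\Vert A\Vert_{\infty}+e^{-r}\big(\sum_{i,j}a_{ij}^{2}p_{i}p_{j}\big)^{1/2}$, which is what eliminates both the maximum and any $\log n$ loss. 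Your cross-term absorption $r^{1/2+1/\alpha}\le r^{2/\alpha}$ for $\alpha\le 2$ is correct and matches the paper, but it does not rescue either of the two gaps above.
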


\begin{remark}[Comparison of Theorem~\ref{Theo_main1} with Hanson-Wright inequalities in \cite{Sambale_Progress_probability,Zhou_Bernoulli}] 
(i). When $p_{1}=\cdots=p_{n}=1$, i.e., $\delta_{i}=1$ for $1\le i\le n$, Theorem \ref{Theo_main1} recovers Sambale's result \cite{Sambale_Progress_probability}, see \eqref{Eq_Intro_Sambale_HW}.  
(ii). When $\alpha=2$, Theorem \ref{Theo_main1} recovers the sparse Hanson-Wright inequality for sparse sub-Gaussian random vectors obtained by Zhou in \cite{Zhou_Bernoulli}, see \eqref{Eq_intro_zhou_HW}. 
\end{remark}

Moreover, when only considering the case $0<\alpha\le 1$, we have the following more refined result.
\begin{theorem}\label{Theo_main2}
	In the same setting of Theorem \ref{Theo_main1}, we have for $0<\alpha\le 1$ and $t\ge 0$
	\begin{align}
		\mathbb{P}\{\vert \xi^\top A\xi-\mathbb{E}\xi^\top A\xi\vert\ge L(\alpha)^{2}t    \}\le 2\exp(-c(\alpha)f(t)),\nonumber
	\end{align}
where  $L(\alpha)=\max_{i}\Vert \zeta_{i}\Vert_{\Psi_{\alpha}}$, $c(\alpha)$ is a positive constant depending only on $\alpha$, and
\begin{align}
	f(t)=\min\Big\{ \frac{t^{2}}{\sum_{k}a_{kk}^{2}p_{k}+\sum_{i\neq j}a_{ij}^{2}p_{i}p_{j}}, 
	& \frac{t}{\Vert (a_{ij}\sqrt{p_{i}p_{j}})_{n\times n}\Vert_{l_{2}\to l_{2}} }, \nonumber\\
	&\Big( \frac{t}{\max_{i}\big( \sum_{j}a_{ij}^{2}p_{j}\big)^{1/2}} \Big)^{\frac{2\alpha}{2+\alpha}}, \Big(\frac{t}{\Vert A\Vert_{\infty}}\Big)^{\frac{\alpha}{2}}\Big\}.\nonumber
\end{align}
\end{theorem}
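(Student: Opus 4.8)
The plan is to prove a single moment bound for $S_A(\xi)-\mathbb E S_A(\xi)$ and then read off the tail through the tails--moments relationship of Section~3. After rescaling each $\zeta_i$ by $L(\alpha)$ (which replaces $A$ by $L(\alpha)^2A$ in the quadratic form), it suffices to assume $L(\alpha)=1$ and to establish, for every $p\ge 2$,
\begin{align}
\big\Vert S_A(\xi)-\mathbb E S_A(\xi)\big\Vert_{L_p}\lesssim_\alpha \sqrt p\,\gamma_1^{1/2}+p\,\Vert B\Vert_{l_2\to l_2}+p^{\frac12+\frac1\alpha}M+p^{2/\alpha}\Vert A\Vert_\infty,
\end{align}
where $\gamma_1=\sum_k a_{kk}^2p_k+\sum_{i\ne j}a_{ij}^2p_ip_j$, $B=(a_{ij}\sqrt{p_ip_j})_{n\times n}$ and $M=\max_i(\sum_j a_{ij}^2p_j)^{1/2}$. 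Matching each summand $c_kp^{\beta_k}$, with $\beta$-exponents $\tfrac12,1,\tfrac{2+\alpha}{2\alpha},\tfrac2\alpha$, against $(t/c_k)^{1/\beta_k}$ reproduces exactly the four terms of $f(t)$, so the tails--moments lemma yields the theorem.

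To obtain the moment bound, decompose
\begin{align}
S_A(\xi)-\mathbb E S_A(\xi)=\sum_i a_{ii}\big(\xi_i^2-\mathbb E\xi_i^2\big)+\sum_{i\ne j}a_{ij}\xi_i\xi_j,
\end{align}
the off-diagonal part being already centered since the entries are independent and centered. The diagonal part is a sum of independent centered variables $a_{ii}(\delta_i\zeta_i^2-p_i\mathbb E\zeta_i^2)$; as $\zeta_i^2$ is $(\alpha/2)$-sub-exponential with norm $\lesssim_\alpha 1$ and $\mathbb E(\delta_i\zeta_i^2)^2=p_i\mathbb E\zeta_i^4\asymp_\alpha p_i$, the sparse Bernstein inequality (Lemma~\ref{Prop_Section3_Bernstein_sparse}) controls it by $\exp(-c(\alpha)\min\{t^2/\sum_k a_{kk}^2p_k,(t/\Vert A\Vert_\infty)^{\alpha/2}\})$, which is absorbed into $f(t)$; note that the first-power weights $p_k$ in $\gamma_1$ originate here. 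For the off-diagonal part I would invoke the decoupling inequality of Section~3 to pass, up to universal constants, to the decoupled form $Z=\sum_{i,j}a_{ij}\xi_i\tilde\xi_j$ with $\tilde\xi$ an independent copy of $\xi$.

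The core is the estimate of $\Vert Z\Vert_{L_p}$, which I would run as a two-level iteration of a \emph{sparse linear-form moment bound}. Conditioning on $\tilde\xi$, one has $Z=\sum_i\xi_i c_i$ with $c_i=\sum_j a_{ij}\tilde\xi_j$; stripping the Bernoulli factors by the contraction principle and applying a Gluskin--Kwapie\'n type bound for $\alpha$-sub-exponential sums ($0<\alpha\le1$) gives, for fixed $c_i$, the estimate $\Vert\sum_i\xi_ic_i\Vert_{L_p}\lesssim_\alpha\sqrt p(\sum_i p_ic_i^2)^{1/2}+p^{1/\alpha}\max_i|c_i|$ (the variance carries the weight $p_i$ because $\mathbb E(\delta_i\zeta_i)^2\asymp p_i$, while the heavy-tail term keeps the unweighted maximum because $\Vert\delta_i\zeta_i\Vert_{\Psi_\alpha}\le\Vert\zeta_i\Vert_{\Psi_\alpha}$). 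Taking $L_p$ over $\tilde\xi$,
\begin{align}
\Vert Z\Vert_{L_p}\lesssim_\alpha \sqrt p\,\big\Vert(\textstyle\sum_i p_ic_i^2)^{1/2}\big\Vert_{L_p}+p^{1/\alpha}\big\Vert\max_i|c_i|\big\Vert_{L_p}.
\end{align}
For the first term, $\sum_i p_ic_i^2=\tilde\xi^\top M_0\tilde\xi$ with $M_0=A^\top\mathrm{diag}(p)A\succeq0$, whose mean is $\asymp_\alpha\Vert B\Vert_F^2\le\gamma_1$, giving the $\sqrt p\,\gamma_1^{1/2}$ term; its fluctuation, handled by a self-bounding recursion of the same inequality applied to $M_0$ at exponent $p/2$, contributes the spectral term, since $\mathrm{diag}(\sqrt p)M_0\mathrm{diag}(\sqrt p)=B^\top B$ forces $\Vert\mathrm{diag}(\sqrt p)M_0\mathrm{diag}(\sqrt p)\Vert_{l_2\to l_2}=\Vert B\Vert_{l_2\to l_2}^2$ and $\sqrt p\cdot(p\Vert B\Vert_{l_2\to l_2}^2)^{1/2}=p\Vert B\Vert_{l_2\to l_2}$ (the remaining recursion terms being dominated by the other three). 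For the second term, applying the same linear-form bound row-wise to $c_i=\sum_j a_{ij}\tilde\xi_j$ yields $\Vert c_i\Vert_{L_p}\lesssim_\alpha\sqrt p(\sum_j a_{ij}^2p_j)^{1/2}+p^{1/\alpha}\Vert A\Vert_\infty$, so that $p^{1/\alpha}\Vert\max_i|c_i|\Vert_{L_p}$ produces precisely $p^{1/2+1/\alpha}M+p^{2/\alpha}\Vert A\Vert_\infty$.

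I expect the main obstacle to be this off-diagonal moment computation, specifically (i) establishing the sparse linear-form bound with the correct variance weight $\sum_i p_ic_i^2$ while leaving the heavy-tail term with the unweighted $\max_i|c_i|$, so that the contraction step degrades neither piece, and (ii) closing the self-bounding recursion for $\tilde\xi^\top M_0\tilde\xi$ so that exactly the weighted spectral norm $\Vert B\Vert_{l_2\to l_2}$ appears rather than a lossy surrogate. The passage from $\max_i\Vert c_i\Vert_{L_p}$ to $\Vert\max_i|c_i|\Vert_{L_p}$ must likewise be carried out without a dimension-dependent factor, which is where the full strength of the $\alpha$-sub-exponential moment growth for $0<\alpha\le1$ is used.
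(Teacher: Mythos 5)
Your roadmap (moments plus Lemma \ref{Lem_Moments_to_tailbound}, decoupling, sparse Bernstein for the diagonal, and a conditional linear-form bound with $p_i$-weighted variance) is the paper's roadmap, but the step you yourself flag as the obstacle is a genuine gap, not a technicality. Your conditional bound $\Vert\sum_i\xi_ic_i\Vert_{L_p}\lesssim_\alpha \sqrt p\,(\sum_ip_ic_i^2)^{1/2}+p^{1/\alpha}\max_i|c_i|$ is correct for fixed $c$ (it is the moment form of Lemma \ref{Prop_Section3_Bernstein_sparse}), but after integrating in $\tilde\xi$ your claim that $p^{1/\alpha}\,\Vert\max_i|c_i|\Vert_{L_p}$ ``produces precisely'' $p^{1/2+1/\alpha}M+p^{2/\alpha}\Vert A\Vert_\infty$ is false for $p\ll\log n$. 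Take $p_i\equiv 1$ and $a_{ij}=1$ if $j=\sigma(i)$ (and $0$ otherwise) for a fixed-point-free involution $\sigma$; then $c_i=\tilde\xi_{\sigma(i)}$, so $\Vert\max_i|c_i|\Vert_{L_p}\asymp_\alpha p^{1/\alpha}+(\log n)^{1/\alpha}$, while $M=\Vert A\Vert_\infty=1$: the left side carries an unavoidable $(\log n)^{1/\alpha}$ factor that the right side lacks, and the crude bound $\Vert\max_i|c_i|\Vert_{L_p}\le(\sum_i\Vert c_i\Vert_{L_p}^p)^{1/p}$ costs $n^{1/p}$. The paper never forms the maximum: it keeps the weighted aggregate $(\sum_ip_i|c_i|^r)^{1/r}$ (the weight $p_i$ coming from $\mathbb E|\delta_i\eta_i|^r=p_i\mathbb E|\eta_i|^r$, via Lemma \ref{Lem_Lp_linearsum}, since $\delta_i\eta_i$ has log-convex tails for $\alpha\le1$), passes it through Fubini, applies the linear bound again in $j$, and only then, at the purely deterministic level, converts $(\sum_{i,j}|a_{ij}|^rp_ip_j)^{1/r}$ and $(\sum_ip_i(\sum_ja_{ij}^2p_j)^{r/2})^{1/r}$ into $\Vert A\Vert_\infty$ and $M$ by a Young-inequality trick whose $e^{-r}$ remainders carry \emph{both} weights $p_i,p_j$ and are therefore absorbed into the Frobenius term $\sqrt r\,(\sum_{i,j}a_{ij}^2p_ip_j)^{1/2}$. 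Once you discard the outer weight $p_i$ in favor of $\max_i|c_i|$, this absorption mechanism is unavailable (the remainders retain only $p_j$), and in the regime $p_i\to0$ no other term can compensate; so your proposal is missing the one idea that makes the theorem dimension-free at all moment scales.

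The second divergence is your self-bounding recursion for $\sqrt p\,\Vert(\tilde\xi^\top M_0\tilde\xi)^{1/2}\Vert_{L_p}$ with $M_0=A\,\mathrm{diag}(p)\,A$: as stated it invokes the very inequality being proved, for a different matrix with nonzero diagonal, at exponent $p/2$, and the assertion that ``the remaining recursion terms are dominated'' is not checked (it happens to be verifiable, e.g.\ $\Vert M_0\Vert_\infty\le M^2$ by Cauchy--Schwarz, but one must also organize the induction over dyadic exponent levels with matrices changing at each level and control the compounding constants over $\log p$ iterations). The paper avoids the recursion entirely with a Gaussian redistribution: it writes the $L_r$-norm of $(\sum_ip_ic_i^2)^{1/2}$ via the Gaussian linear form $\sum_i\sqrt{p_i}g_ic_i$, swaps the order of summation, applies Lemma \ref{Lem_Lp_linearsum} in the $(\tilde\delta,\tilde\eta)$ variables, and lands on the decoupled sub-Gaussian chaos $\sum_{i,j}a_{ij}\sqrt{p_ip_j}\,g_i\tilde g_j$, to which Corollary \ref{Cor_Section2_quadratic_forms} gives exactly $\sqrt r\,\Vert B\Vert_F+r\,\Vert B\Vert_{l_2\to l_2}$ with $B=(a_{ij}\sqrt{p_ip_j})$. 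Your diagonal treatment and the final reduction to the tail via Lemma \ref{Lem_Moments_to_tailbound} do match the paper; the proof fails in the off-diagonal moment computation at the two places above.
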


\begin{remark}[Theorem~\ref{Theo_main2} improves Theorem~\ref{Theo_main1}]
     Theorem \ref{Theo_main1} for the case $0<\alpha\le 1$ can be derived from Theorem \ref{Theo_main2}. Indeed, a direct integration of the tail probability in Theorem \ref{Theo_main2} yields that 
\begin{align}
	\Big\Vert  \xi^\top A\xi-\mathbb{E}\xi^{\top}A\xi \Big\Vert_{L_{r}}\lesssim_{\alpha}&r^{2/\alpha}\Vert A\Vert_{\infty}+r^{1/2+1/\alpha}\max_{i} \Big(\sum_{j}a_{ij}^{2}p_{j} \Big)^{1/2}\nonumber\\
	&+r\Vert (a_{ij}\sqrt{p_{i}p_{j}})\Vert_{l_{2}\to l_{2}}+\sqrt{r}\Big(\sum_{k}a_{kk}^{2}p_{k}+\sum_{i\neq j}a_{ij}^{2}p_{i}p_{j} \Big)^{1/2}\nonumber\\
	\lesssim_{\alpha} &r^{2/\alpha}\Vert A\Vert_{l_{2}\to l_{2}}+\sqrt{r}\Big(\sum_{k}a_{kk}^{2}p_{k}+\sum_{i\neq j}a_{ij}^{2}p_{i}p_{j} \Big)^{1/2},\nonumber
\end{align}
where we use the following fact
\begin{align}
 &\max_{i} \Big(\sum_{j}a_{ij}^{2}p_{j} \Big)^{1/2}\leq \|A\|_{l_2\to l_{\infty}}\leq \|A\|_{l_2\to l_2},\nonumber\\
 &\Vert (a_{ij}\sqrt{p_{i}p_{j}})\Vert_{l_{2}\to l_{2}}\le \Vert A\Vert_{l_{2}\to l_{2}},\quad \|A\|_{\infty}\leq \Vert A\Vert_{l_{2}\to l_{2}}.
\end{align}
Then, one can obtain Theorem \ref{Theo_main1} by Lemma \ref{Lem_Moments_to_tailbound} below.
\end{remark}

\begin{remark}[Comparison of Theorem~\ref{Theo_main2} with Hanson-Wright inequalities  in \cite{He_arXiv_HW,Kolesko_SPL}]

(i).  When $p_{1}=\cdots=p_{2}=1$, Theorem \ref{Theo_main2} recovers the Hanson-Wright inequality \eqref{Eq_intro_logconvex} due to Kolesko and Lata{\l}a in \cite{Kolesko_SPL}.

(ii). Note that for $r\ge 1$,
    \begin{align}
        r^{\frac{1}{2}+\frac{1}{\alpha}}\max_{i}(\sum_{j}a_{ij}^{2}p_{j})^{1/2}&\le (r^{1/\alpha}\Vert A\Vert_{\infty}^{1/2})\cdot(r^{1/2}\max_{i}(\sum_{j}\vert a_{ij}\vert p_{j})^{1/2})\nonumber\\
        &\le \frac{1}{2}r^{2/\alpha}\Vert A\Vert_{\infty}+\frac{1}{2}r\max_{i}\sum_{j}\vert a_{ij}\vert p_{j}\nonumber
    \end{align}
and 
\begin{align}
    \Vert (a_{ij}\sqrt{p_{i}p_{j}})\Vert_{l_{2}\to l_{2}}\le \max_{i}\sum_{j}\vert a_{ij}\vert \sqrt{p_{i}p_{j}}\le K\max_{i}\sum_{j}\vert a_{ij}\vert p_{j},\nonumber
\end{align}
where $K=K(p_{1}, \cdots,p_{n})=\max p_{i}/\min p_{j}$. Hence, Theorem \ref{Theo_main2} yields that 
\begin{align}
	\Big\Vert  \xi^\top A\xi-\mathbb{E}\xi^{\top}A\xi \Big\Vert_{L_{r}}\lesssim_{\alpha}&r^{2/\alpha}\Vert A\Vert_{\infty}+r^{1/2+1/\alpha}\max_{i} \Big(\sum_{j}a_{ij}^{2}p_{j} \Big)^{1/2}\nonumber\\
	&+r\Vert (a_{ij}\sqrt{p_{i}p_{j}})\Vert_{l_{2}\to l_{2}}+\sqrt{r}\Big(\sum_{k}a_{kk}^{2}p_{k}+\sum_{i\neq j}a_{ij}^{2}p_{i}p_{j} \Big)^{1/2}\nonumber\\
	\lesssim_{\alpha} &r^{2/\alpha}\Vert A\Vert_{\infty}+Kr\max_{i}\sum_{i}\vert a_{ij}\vert p_{j}+\sqrt{r}\Big(\sum_{k}a_{kk}^{2}p_{k}+\sum_{i\neq j}a_{ij}^{2}p_{i}p_{j} \Big)^{1/2}.\nonumber
\end{align}
Then, by Lemma \ref{Lem_Moments_to_tailbound} below, we deduce \eqref{Eq_Intro_He_Hw} from Theorem \ref{Theo_main2} but with an extra factor $K$.
	
\end{remark}

Note that, when $A$ is a diagonal-free matrix, the decay rate of the Hanson-Wright inequality \eqref{Eq_intro_logconvex} is optimal in the sense that, there exist independent centered $\alpha$-sub-exponential random variables $\eta_{1}, \cdots, \eta_{n}$ such that a matching lower bound holds.  See Proposition~\ref{Lem_appendix_1} for details and its proof is given in Appendix~\ref{appendix_A}.
\begin{prop}[Optimality of Theorem~\ref{Theo_main2} when $p_{1}=\cdots=p_{2}=1$]\label{Lem_appendix_1}
	Let $\eta_{1}, \cdots, \eta_{n}\stackrel{i.i.d.}{\sim}\mathcal{W}_{s}(\alpha)$ and $A$ be a symmetric diagonal-free matrix. Then for $0<\alpha\le 1$ and $t\ge 0$
	\begin{align}
		\mathbb{P}\{\vert \eta^\top A\eta-\mathbb{E}\eta^\top A\eta\vert\ge t    \}\ge C(\alpha)\exp(-c(\alpha)f_{2}(t)),\nonumber
	\end{align}
	where $\eta=(\eta_{1}, \cdots,\eta_{n})^{\top}$, $C(\alpha), c(\alpha)$ are  positive constants depending only on $\alpha$, and $f_{2}(t)$ is defined in \eqref{Eq_intro_f2}.
\end{prop}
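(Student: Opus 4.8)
The plan is to prove the matching lower bound at the level of moments and then convert to tails. Since $A$ is symmetric and diagonal-free and the $\eta_i$ are independent and centered, $\mathbb{E}\,\eta^\top A\eta=0$, so the statement concerns the fluctuations of $Y:=\eta^\top A\eta$ around $0$. Because $f_2(t)$ is a minimum of four terms, $\exp(-c(\alpha)f_2(t))=\max_k\exp(-c(\alpha)g_k(t))$, so it suffices to produce, for each of the four terms $g_k$, a lower bound $\mathbb{P}\{|Y|\ge t\}\ge C(\alpha)\exp(-c(\alpha)g_k(t))$. The route I would take is to establish the matching \emph{lower} moment estimate
\begin{align}
\Vert Y\Vert_{L_r}\gtrsim_\alpha \sqrt r\,\Vert A\Vert_F+r\,\Vert A\Vert_{l_2\to l_2}+r^{1/2+1/\alpha}\,\Vert A\Vert_{l_2\to l_\infty}+r^{2/\alpha}\,\Vert A\Vert_\infty,\qquad r\ge 2,\nonumber
\end{align}
because these four moment terms are exactly the functional inverses of the four tail terms in $f_2$ (this is the computation already carried out in the Remark following Theorem~\ref{Theo_main2}, specialized to $p_i=1$). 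The conversion back to a tail lower bound is a Paley--Zygmund argument: the upper moment bound supplies the norm regularity $\Vert Y\Vert_{L_{2r}}\lesssim_\alpha\Vert Y\Vert_{L_r}$, whence $\mathbb{P}\{|Y|\ge\tfrac12\Vert Y\Vert_{L_r}\}\gtrsim_\alpha\exp(-c(\alpha)r)$, and choosing $r=r(t)$ so that the $r$-th moment is $\asymp t$ gives $r(t)\asymp f_2(t)$ and the claim. As a sum is comparable to its maximum, it remains to bound $\Vert Y\Vert_{L_r}$ below by each of the four terms.

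For this I would decouple and linearize. By the (two-sided) decoupling inequality recalled in Section~3, $\Vert Y\Vert_{L_r}\gtrsim\Vert \widetilde Y\Vert_{L_r}$ where $\widetilde Y=2\sum_{i,j}a_{ij}\eta_i\eta_j'$ and $\eta'$ is an independent copy; conditioning on $\eta'$, this becomes the linear form $\widetilde Y=2\langle\eta,A\eta'\rangle$ in the independent symmetric variables $\eta_i$, with coefficient vector $b=A\eta'$. The key input is the two-sided moment bound for linear forms of symmetric $\alpha$-sub-exponential variables,
\begin{align}
\Vert \langle\eta,b\rangle\Vert_{L_r}\gtrsim_\alpha \sqrt r\,\Vert b\Vert_2+r^{1/\alpha}\Vert b\Vert_\infty,\nonumber
\end{align}
valid for every fixed $b$ with a constant uniform in $b$. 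Applying it inside the expectation over $\eta'$ (Fubini for the $r$-th power, then monotonicity) yields
\begin{align}
\Vert \widetilde Y\Vert_{L_r}\gtrsim_\alpha \sqrt r\,\big\Vert\,\Vert A\eta'\Vert_2\,\big\Vert_{L_r(\eta')}+r^{1/\alpha}\big\Vert\,\Vert A\eta'\Vert_\infty\,\big\Vert_{L_r(\eta')}.\nonumber
\end{align}

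It then remains to lower bound the two vector-valued moments. For the first, $\Vert\,\Vert A\eta'\Vert_2\,\Vert_{L_r}\ge\Vert A\eta'\Vert_{L_2}\asymp\Vert A\Vert_F$ produces the $\sqrt r\,\Vert A\Vert_F$ term, while projecting onto the leading eigenvector $u$ of $A$ gives $\Vert A\eta'\Vert_2\ge\Vert A\Vert_{l_2\to l_2}\,|\langle u,\eta'\rangle|$ and hence, by the sub-gaussian lower half of the linear-form bound, $\Vert\,\Vert A\eta'\Vert_2\,\Vert_{L_r}\gtrsim\sqrt r\,\Vert A\Vert_{l_2\to l_2}$, producing the $r\,\Vert A\Vert_{l_2\to l_2}$ term. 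For the second, choosing $i_0$ to be the row maximizing $\Vert A\Vert_{l_2\to l_\infty}$ (resp. containing the largest entry) and using $\Vert A\eta'\Vert_\infty\ge|\langle A_{i_0\cdot},\eta'\rangle|$ with the same bound gives $\Vert\,\Vert A\eta'\Vert_\infty\,\Vert_{L_r}\gtrsim\sqrt r\,\Vert A\Vert_{l_2\to l_\infty}+r^{1/\alpha}\Vert A\Vert_\infty$, which after the $r^{1/\alpha}$ prefactor produces the $r^{1/2+1/\alpha}\,\Vert A\Vert_{l_2\to l_\infty}$ and $r^{2/\alpha}\,\Vert A\Vert_\infty$ terms. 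Taking the maximum over the four contributions completes the moment lower bound.

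The main obstacle is the two-sided moment bound for linear forms, specifically its sub-gaussian lower half $\Vert\langle\eta,b\rangle\Vert_{L_r}\gtrsim_\alpha\sqrt r\,\Vert b\Vert_2$. The heavy-tail half $r^{1/\alpha}\Vert b\Vert_\infty$ is immediate from symmetry and Jensen (drop all but the largest coordinate and use $\mathbb{E}|V+W|^r\ge\mathbb{E}|V|^r$ for independent $V$ with $\mathbb{E}[W\mid V]=0$), but the $\sqrt r\,\Vert b\Vert_2$ estimate is a genuine anti-concentration / CLT-type lower deviation bound that must be established for symmetric Weibull variables with $0<\alpha\le 1$ (for instance via a Gaussian comparison, using that such variables are non-degenerate in the sense $\Vert\eta_i\Vert_{L_2}\lesssim_\alpha\Vert\eta_i\Vert_{L_1}$). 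This sub-gaussian lower bound feeds the three ``light'' terms (Frobenius, spectral, and the $l_2\to l_\infty$ part), and is where I expect the real work to lie; the remaining steps---decoupling, the Fubini interchange, and the Paley--Zygmund conversion---are comparatively routine given the lemmas of Section~3 and the upper moment bounds already available.
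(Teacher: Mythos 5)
Your proposal is correct, and its skeleton is exactly the paper's: two-sided decoupling for a symmetric diagonal-free matrix, a moment lower bound matching the four terms of $f_{2}$, Paley--Zygmund with the ratio $\Vert Y\Vert_{L_{2r}}/\Vert Y\Vert_{L_{r}}$ bounded by a constant $C(\alpha)$, and the inversion $r\asymp f_{2}(t)$. The one genuine difference is at the key input: the paper simply quotes the \emph{two-sided} moment estimate of Kolesko and Lata{\l}a for the decoupled bilinear form (Lemma \ref{Lem_Lp_quadratic_0_1}, stated with $\asymp_{\alpha}$) and is done in a few lines, whereas you re-derive the lower half of that estimate by conditioning on $\eta'$, invoking a two-sided linear-form bound, and extracting the four terms via the $L_{2}$ average, the leading eigenvector (valid since $A$ is symmetric, so some eigenvalue attains $\Vert A\Vert_{l_{2}\to l_{2}}$ in modulus), and the extremal rows; this buys self-containedness at the cost of length. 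One correction to your own assessment: the ``main obstacle'' you flag, namely the sub-gaussian lower half $\Vert\langle\eta,b\rangle\Vert_{L_{r}}\gtrsim_{\alpha}\sqrt{r}\,\Vert b\Vert_{2}$, requires no Gaussian comparison or anti-concentration argument, because $b_{i}\eta_{i}$ with $\eta_{i}\sim\mathcal{W}_{s}(\alpha)$, $0<\alpha\le 1$, has log-convex tails, so the paper's Lemma \ref{Lem_Lp_linearsum} (Hitczenko's two-sided bound) gives directly $\Vert\langle\eta,b\rangle\Vert_{L_{r}}\asymp_{\alpha}\bigl(\sum_{i}|b_{i}|^{r}\,\mathbb{E}|\eta_{1}|^{r}\bigr)^{1/r}+\sqrt{r}\,\Vert b\Vert_{2}\gtrsim_{\alpha}r^{1/\alpha}\Vert b\Vert_{\infty}+\sqrt{r}\,\Vert b\Vert_{2}$, which is exactly your key input. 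Two small points of care: the regularity $\Vert Y\Vert_{L_{2r}}\lesssim_{\alpha}\Vert Y\Vert_{L_{r}}$ needs both the upper bound at level $2r$ and your lower bound at level $r$ (the upper bound alone does not suffice, though you have both); and the Paley--Zygmund estimate is obtained for $r\ge 2$ and must be extended to all $t\ge 0$ by adjusting constants, as the paper does by replacing $e^{-c_{1}(\alpha)r}$ with $e^{-c_{1}(\alpha)(r+2)}$.
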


\begin{remark}[Generalization to general $A$]
 Theorems \ref{Theo_main1} and \ref{Theo_main2} assume that the matrix $A$ is symmetric. This assumption was made primarily for the convenience of presentation. Indeed, the above results can be extended to general square matrices. The only modification required is that in many places, $A$ should be replaced by $\frac{1}{2}(A+A^{\top})$.
\end{remark}




\subsection{Disscussion}
In this section, we assume that $\{\delta_{i}, 1\le i\le n \}$ is a sequence of  independent random variables with $\delta_{i}\sim\textnormal{Bernoulli}(p_{i})$ and $\{\zeta_{i}, 1\le i\le n  \}$ is a sequence of independent centered $\alpha$-sub-exponential random variables. Assume $\delta_{i}$'s are independent of $\zeta_{i}$'s. 
Denote by $\xi=(\xi_{1}, \cdots, \xi_{n})$ the random vector  with $\xi_{i}=\delta_{i}\cdot\zeta_{i}$. Let $A=(a_{ij})_{n\times n}$ be a symmetric fixed matrix and $a=(a_{1}, \cdots, a_{n})^\top$ be a fixed vector. Set $L(\alpha)=\max_{i}\Vert \zeta_{i}\Vert_{\Psi_{\alpha}}$. We mention the following two open questions:

\begin{enumerate}
    \item For $1\le \alpha\le 2$ and $t>0$, it holds (see Theorem 6.1 in \cite{Adamczak_PTRF})
	\begin{align}
	\mathbb{P}\Big\{\big\vert \sum_{i=1}^{n}a_{i}\zeta_{i}\big\vert\ge t   \Big\}\le 2\exp\Big(-c(\alpha)\min\Big\{\frac{t^{2}}{L(\alpha)^{2}\Vert a\Vert_{2}^{2}}, \Big(\frac{t}{L(\alpha)\Vert a\Vert_{\alpha^{*}}}\Big)^{\alpha}  \Big\}   \Big)\nonumber
\end{align}
and 
\begin{align}
	\mathbb{P}\{\vert \zeta^\top A\zeta-\mathbb{E}\zeta^\top A\zeta\vert\ge L(\alpha)^{2}t    \}\le 2\exp(-c(\alpha)f_{2}(t)),\nonumber
\end{align}
where  $\alpha^{*}=\alpha/(\alpha-1)$ and $f_{1}(t)$ is defined in \eqref{Eq_intro_f1}.
How to obtain upper bounds for
\begin{align}
	\mathbb{P}\Big\{\big\vert \sum_{i=1}^{n}a_{i}\xi_{i}\big\vert\ge t   \Big\},\quad \mathbb{P}\{\vert \xi^\top A\xi-\mathbb{E}\xi^\top A\xi\vert\ge t    \}\nonumber
\end{align}
which recover the above results when $\delta_{1}=\cdots\delta_{n}=1$ is an interesting question.
\item 
In this paper, we give upper bounds for the linear and quadratic forms of sparse $\alpha$-sub-exponential random vectors:
\begin{align}
	\big\Vert \sum_{i=1}^{n} a_{i}\xi_{i}\big\Vert_{L_{r}}, \quad \Vert \xi^{\top}A\xi-\mathbb{E}\xi^{\top}A\xi\Vert_{L_{r}}.\nonumber
\end{align}
Investigating their lower bounds is an interesting question, which may help us establish the optimal bounds for the above quantities.
\end{enumerate}


\section{Applications}
In this section, we present several applications in covariance estimation, randomized low-rank approximation, and concentration of random vectors.
Sparsity can naturally come from the fact that  a high-dimensional random vector  is sparse, for example, when the elements of the vector are missing at random, or when we intentionally sparsify the vector to speed up computation.

\subsection{Covariance matrix estimation under missing observations}

Concentration properties of the sample covariance matrix play a central role in numerous high-dimensional inference tasks, including sparse PCA, graphical model selection, discriminant analysis, and high-dimensional regression \cite{CK22}. 

We consider the problem of estimating the covariance matrix of a random vector from i.i.d.\ samples, where each vector is observed with entries missing independently.  In the sub-Gaussian setting, this problem was first analyzed in \cite{lounici2014high}, motivated by applications in climate science, gene expression, and cosmology, where an unbiased estimator derived from the sample covariance matrix was proposed. The corresponding error bounds were later sharpened in \cite{klochkov2020uniform}. More recently, under the $L_4$--$L_2$ moment equivalence assumption, Abdalla \cite{abdalla2024covariance} introduced an estimator that achieves both optimal dependence on the missingness parameter~$p$ and the minimax optimal convergence rate under the spectral norm. However, this estimator is computationally intractable in practice.  

Beyond these contributions, a number of related results on covariance estimation with missing data have appeared in the literature, primarily for sub-Gaussian random vectors; see, for example, \cite{cai2016minimax,pavez2020covariance,park2021estimating,kolar2012estimating,Zhou_Bernoulli,Zhou_Bernoulli_2024}. In this work, we go beyond the sub-Gaussian framework and develop  concentration results for covariance estimation under heavy-tailed distributions with missing observations.


We state our assumptions as follows:
\begin{assumption}[Data assumption]
Assume that the entries of $\xi\in \R^m$ are independent, centered, $\alpha$-sub-exponential random variables with $\alpha \in (0,2]$ with variance $1$. Assume \[Y=B\xi\] where \(B\) is a fixed \(d\times m\) matrix and $BB^\top =\Sigma$.
\end{assumption}
This means the data vector $Y$ satisfies a multivariate model \cite{fan2008high,cai2016minimax} as a linear transformation of $\xi^{(s)}$, which is a common assumption in high-dimensional statistics.
We assume the data vector is observed partially with missing entries, defined as follows:
\begin{assumption}[Missing at random]
      We observe \(n\) independent samples \(X^{(1)},\ldots,X^{(n)}\in \R^d\), where each  
\[
X^{(s)}= \delta^{(s) } \circ   Y^{(s)},
\]  
where 
\begin{itemize}
    \item \(\delta^{(s)}\in\{0,1\}^{d}\) are i.i.d. Bernoulli random vectors with independent entries \(\delta^{(s)}_{i}\sim\mathrm{Bernoulli}(p_i)\), \(i=1,\dots,d\);
    \item \(Y^{(s)}\in\mathbb{R}^{m}\) are i.i.d. copies of a random vector $Y$.
\end{itemize}
\end{assumption}
 


The goal is to estimate  $BB^\top=\Sigma$. A natural and unbiased estimator of $\Sigma$ is defined by

\begin{align*}
    \widehat{\Sigma}_{jk} =\begin{cases}
        \frac{1}{np_j p_k}\sum_{s=1}^n
     X_j^{(s)}X_k^{(s)}, & \quad j\not=k,\\
     \frac{1}{np_j} \sum_{s=1}^n {(X_j^{(s)})}^2 & \quad j=k.
    \end{cases} 
\end{align*}
This estimator is called the \textit{inverse probability weighting} (IPW) estimator \cite{kolar2012estimating,friedman2008sparse,cai2016minimax,park2021estimating}. When $Y$ is a sub-Gaussian random vector, the element-wise deviation of the IPW estimator has been considered in \cite{kolar2012estimating,lounici2014high,park2021estimating}.



As soon as the dimension $d$ exceeds the sample size $n$, however, the sample covariance matrix ceases to be consistent in the operator norm.    A standard remedy is to restrict attention to the “significant” coordinates, i.e. to work with a sparse sub-vector.  
Motivated by this practice we analyse the maximum $k$-sparse sub-matrix operator norm of the covariance matrix \cite{CK22}. In particular, we would like to bound the following quantity:
\[\mathrm{RIP}_n(k):= \sup_{\substack{\theta\in\mathbb{R}^d, \|\theta\|_2\le 1\\ \|\theta\|_0\le k}} \left| \theta^\T(\widehat{\Sigma} - \Sigma)\theta\right|.\]

This norm is also of importance in high-dimensional
linear regression due to its connections to the restricted isometry property (RIP) \cite{candes2007dantzig} and the restricted eigenvalue (RE) condition \cite{bickel2009simultaneous}.
When $k=1$, $\mathrm{RIP}_n(1)$ is $\|\hat{\Sigma}-\Sigma\|_{\infty}$ and when $k=d$, $\mathrm{RIP}_n(1)$ is the operator norm  $\|\hat{\Sigma}-\Sigma\|_{l_2\to l_2}$.

Before giving the main result of this subsection, we introduce some notations.  For the vectors $p=(p_1,\cdots, p_d)^\T$ and $\theta=(\theta_1,\cdots, \theta_d)^\T$, the vector $\theta\circ\frac{1}{p}$ is defined as follows:
\begin{align}
    \theta\circ\frac{1}{p}=(\theta_1/p_1, \cdots, \theta_d/p_d)^\T.
\end{align}
We define the matrix  $A_{\theta, p}\in\mathbb{R}^{d\times d}$ as follows:
    \[
(A_{\theta, p})_{jk}=
\begin{cases}
\theta_j^2/p_j, & j=k,\\[4pt]
\dfrac{\theta_j\theta_k}{p_j p_k}, & j\neq k.
\end{cases}
\]

\begin{theorem}\label{thm:RIP}
    Assume that $\max_{i}\Vert \xi_{i}\Vert_{\Psi_{\alpha}}\le 1$. For any $t\ge 0$, with probability at least $1-2e^{-t}$,
    \begin{align}      \mathrm{RIP}_n(k)&\lesssim_{\alpha}\frac{(t+k\log(48ed/k))^{1/2}}{\sqrt{n}}\sup_{\theta\in\Omega}\Big(\mathbb{E}\Vert B^\T\mathrm{Diag}(\delta^{(1)})A_{\theta, p}\mathrm{Diag}(\delta^{(1)})B\Vert_{F}^2\Big)^{1/2}\nonumber\\&+\Big(\frac{(t+k\log(48ed/k))^{3/4}}{n^{3/4}}+\frac{(t+k\log(48ed/k))^{2/\alpha}}{n}  \Big)\times\nonumber\\
    &\quad\Vert B\Vert_{l_2\to l_2}\cdot(\mathrm{Diag}(\sqrt{p_1}, \cdots, \sqrt{p_d} )\cdot B\Vert_{F}+\Vert B\Vert_{l_2\to l_2})\cdot\sup_{\theta\in\Omega}\Vert\theta\circ\frac{1}{p}\Vert^2_{2},
    \end{align}
   where $\Omega=\{\theta\in\mathbb{R}^{d}: \Vert \theta\Vert_{2}\le 1, \Vert \theta\Vert_{0}\le k\}$.
\end{theorem}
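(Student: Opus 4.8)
The plan is to reduce $\mathrm{RIP}_n(k)$ to the supremum, over $k$-sparse $\theta$, of a single large quadratic form in the \emph{independent} vector $\xi$, to which the dense inequality of Theorem~\ref{Theo_main1} applies conditionally on the masks, and then to handle the sparse supremum by an entropy/net argument. First I would observe that, writing $X^{(s)}=\mathrm{Diag}(\delta^{(s)})B\xi^{(s)}$, for each fixed $\theta$
\[
\theta^\top\widehat{\Sigma}\theta=\frac1n\sum_{s=1}^n (X^{(s)})^\top A_{\theta,p}X^{(s)}=\frac1n\sum_{s=1}^n(\xi^{(s)})^\top M^{(s)}\xi^{(s)}=:\Xi^\top\mathcal{M}_\theta\Xi,
\]
where $M^{(s)}:=B^\top\mathrm{Diag}(\delta^{(s)})A_{\theta,p}\mathrm{Diag}(\delta^{(s)})B$, the vector $\Xi:=(\xi^{(1)},\dots,\xi^{(n)})\in\R^{nm}$ has independent centered $\alpha$-sub-exponential entries, and $\mathcal{M}_\theta:=\frac1n\,\mathrm{diag}(M^{(1)},\dots,M^{(n)})$ is block diagonal and symmetric. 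A direct computation using $\E\delta_j=p_j$ shows $\E[\Xi^\top\mathcal{M}_\theta\Xi]=\theta^\top\Sigma\theta$, so I would split the deviation as $\theta^\top(\widehat{\Sigma}-\Sigma)\theta=(\Xi^\top\mathcal{M}_\theta\Xi-\tr\mathcal{M}_\theta)+(\tr\mathcal{M}_\theta-\theta^\top\Sigma\theta)$, the first being the quadratic fluctuation of $\xi$ at fixed masks and the second a centered average of the Bernoulli quadratic forms $\tr M^{(s)}=\sum_{j,k}\delta^{(s)}_j\delta^{(s)}_k(A_{\theta,p})_{jk}\Sigma_{kj}$.

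\textbf{Moment bound for a fixed $\theta$.} Conditioning on $\delta:=(\delta^{(1)},\dots,\delta^{(n)})$, I would apply Theorem~\ref{Theo_main1} with all $p_i=1$ to the dense symmetric form $\Xi^\top\mathcal{M}_\theta\Xi$, which after integrating the tail (Lemma~\ref{Lem_Moments_to_tailbound}) gives $\|\Xi^\top\mathcal{M}_\theta\Xi-\tr\mathcal{M}_\theta\|_{L_r(\xi)}\lesssim_\alpha \sqrt r\,\|\mathcal{M}_\theta\|_{F}+r^{2/\alpha}\|\mathcal{M}_\theta\|_{l_2\to l_2}$. Taking $L_r(\delta)$ norms and using that the masks only shrink the spectral norm, $\|\mathcal{M}_\theta\|_{l_2\to l_2}=\tfrac1n\max_s\|M^{(s)}\|_{l_2\to l_2}\le \tfrac1n\|B\|_{l_2\to l_2}^2\|\theta\circ\tfrac1p\|_2^2$ (since $\|A_{\theta,p}\|_{l_2\to l_2}\lesssim\|\theta\circ\tfrac1p\|_2^2$), produces the $r^{2/\alpha}/n$ contribution. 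For the Frobenius term I would write $\|\mathcal{M}_\theta\|_F^2=\frac1{n^2}\sum_s\|M^{(s)}\|_F^2$ and control the empirical average $\frac1n\sum_s\|M^{(s)}\|_F^2$ of i.i.d.\ nonnegative variables via Lemma~\ref{Prop_Section3_Bernstein_sparse}: its mean yields $\frac{\sqrt r}{\sqrt n}(\E\|M^{(1)}\|_F^2)^{1/2}$, while its fluctuation, after the outer square root, yields the intermediate $r^{3/4}/n^{3/4}$ term. Carrying out the $\delta$-expectations (which introduce the $\sqrt p$-weights) one unwinds the two higher-order coefficients into the common factor $\|B\|_{l_2\to l_2}(\|\mathrm{Diag}(\sqrt{p_1},\dots,\sqrt{p_d})B\|_F+\|B\|_{l_2\to l_2})\|\theta\circ\tfrac1p\|_2^2$. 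The second summand $\tr\mathcal{M}_\theta-\theta^\top\Sigma\theta$ is again a centered average of Bernoulli quadratic forms, handled by the same inequalities and dominated by the above.

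\textbf{Supremum over sparse $\theta$.} Since $\widehat{\Sigma}-\Sigma$ does not depend on $\theta$, for each support $S$ with $|S|\le k$ one has $\sup_{\|\theta\|_2\le1,\ \mathrm{supp}(\theta)\subseteq S}|\theta^\top(\widehat{\Sigma}-\Sigma)\theta|=\|(\widehat{\Sigma}-\Sigma)_{S,S}\|_{l_2\to l_2}$, which a $1/4$-net $\mathcal{N}_S$ of the sphere in $\R^S$ reduces to $2\max_{\theta\in\mathcal{N}_S}|\theta^\top(\widehat{\Sigma}-\Sigma)\theta|$. The union over the $\binom{d}{k}$ supports gives a net $\mathcal{N}$ with $\log|\mathcal{N}|\lesssim k\log(48ed/k)$. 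I would then apply the fixed-$\theta$ bound at $r\asymp t+k\log(48ed/k)=:u$, use $\E\max_{\theta\in\mathcal{N}}|\cdot|^r\le|\mathcal{N}|\max_\theta\E|\cdot|^r$ so that $|\mathcal{N}|^{1/r}=O(1)$, replace each $\theta$-dependent coefficient by its supremum over $\Omega$, and finally convert the resulting $L_u$ bound into the stated high-probability statement via Lemma~\ref{Lem_Moments_to_tailbound}.

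The reduction and the net step are routine. The delicate point is the $L_r(\delta)$ control of $\|\mathcal{M}_\theta\|_F$, i.e.\ the concentration of the empirical Frobenius energy $\frac1n\sum_s\|M^{(s)}\|_F^2$: this is precisely what creates the non-standard $n^{-3/4}$ rate and forces the mixed coefficient $\|B\|_{l_2\to l_2}(\|\mathrm{Diag}(\sqrt p)B\|_F+\|B\|_{l_2\to l_2})$. Making the $\delta$-expectations reproduce exactly these $\sqrt p$-weighted norms, and verifying that the trace-fluctuation term is genuinely of lower order, is where the main bookkeeping effort will lie.
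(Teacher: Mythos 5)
Your plan follows essentially the same roadmap as the paper's proof: the same reduction of $\theta^\top(\widehat{\Sigma}-\Sigma)\theta$ to a block-diagonal quadratic form in the stacked vector $\Xi$, the same conditional moment bound $\sqrt{r}\,\Vert\mathcal{M}_\theta\Vert_F + r^{2/\alpha}\Vert\mathcal{M}_\theta\Vert_{l_2\to l_2}$ (the paper obtains it from decoupling plus Corollary \ref{Cor_Section2_quadratic_forms}, which is exactly what your conditional application of Theorem \ref{Theo_main1} with all $p_i=1$ amounts to), the same Bernstein-type control of $\frac{1}{n^2}\sum_s\Vert M^{(s)}\Vert_F^2$ producing the $n^{-3/4}$ term, the same spectral bound $\Vert\mathcal{M}_\theta\Vert_{l_2\to l_2}\le \frac{1}{n}\Vert B\Vert_{l_2\to l_2}^2\Vert A_{\theta,p}\Vert_{l_2\to l_2}$, and the same sparse net of cardinality $(48ed/k)^k$ (the paper takes $\varepsilon=1/16$ and a symmetrized bilinear trick; your per-support spectral-net reduction is equivalent, as is your moment-level union bound with $|\mathcal{N}|^{1/r}=O(1)$ at $r\asymp t+k\log(48ed/k)$).

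There is, however, one genuine gap: the input that legitimizes the Bernstein step is never established. You need that $\Vert M^{(s)}\Vert_F=\Vert B^\top\mathrm{Diag}(\delta^{(s)})A_{\theta,p}\mathrm{Diag}(\delta^{(s)})B\Vert_F$ is sub-Gaussian in the masks with parameter $K_1(\theta)=\Vert B\Vert_{l_2\to l_2}\big(\Vert \mathrm{Diag}(\sqrt{p_1},\dots,\sqrt{p_d})B\Vert_F+\Vert B\Vert_{l_2\to l_2}\big)\Vert\theta\circ\frac{1}{p}\Vert_2^2$, so that $\Vert M^{(s)}\Vert_F^2$ is sub-exponential at scale $K_1(\theta)^2$ and its fluctuation enters at $(r/n^3)^{1/2}K_1(\theta)^2$, yielding the $r^{3/4}/n^{3/4}$ term after the outer square root. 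The paper proves exactly this in Propositions \ref{Proposition_application_covariace_1} and \ref{Proposition_application_covariace_2}, by splitting $A_{\theta,p}=(\theta\circ\frac{1}{p})(\theta\circ\frac{1}{p})^\top+(\text{diagonal correction})$ and applying Talagrand's inequality (Lemma \ref{Lem_concentration_inequality}) to the convex Lipschitz map $\delta\mapsto\Vert\sqrt{A}\cdot\mathrm{Diag}(\delta)\cdot B\Vert_F$; your citation of Lemma \ref{Prop_Section3_Bernstein_sparse} does not discharge this, since that lemma (with $\alpha=1$, $p_i=1$ it reduces to ordinary Bernstein) \emph{presupposes} the sub-exponentiality of the summands with the correct parameter — which is the very point at issue. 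You flag this as "the main bookkeeping effort," but without the Talagrand argument the $n^{-3/4}$ rate and the mixed coefficient are asserted, not derived. Separately, your explicit split of the conditional-mean fluctuation $\tr\mathcal{M}_\theta-\theta^\top\Sigma\theta$ is a careful touch the paper elides (it centers by the full expectation inside the decoupled bound); your claim that it is dominated is plausible, being an average of bounded Bernoulli chaoses with coefficients controlled by $K_1(\theta)$, but it too would need its own short estimate.
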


\begin{remark}[Comparison with \cite{KC22}]
 When all \( p_i=1 \), \cite{KC22} studied covariance estimation for marginally sub-Weibull random vectors. Compared with their result, Theorem \ref{thm:RIP} removes the extra \( \log n \) factor of the $t^{2/\alpha}$ term, and extends the analysis to the missing-observation setting; however, our assumption on the data \( Y \) is stronger, whereas \cite{KC22} does not require \( X \) to follow a multivariate model.
\end{remark}
\begin{remark}
    (i). By Propositions \ref{Proposition_application_covariace_1} and \ref{Proposition_application_covariace_2} below, we have
    \begin{align}
        &\Big(\mathbb{E}\Vert B^\T\mathrm{Diag}(\delta^{(1)})A_{\theta, p}\mathrm{Diag}(\delta^{(1)})B\Vert_{F}^2\Big)^{1/2}\nonumber\\
        \lesssim& \Vert B\Vert_{l_2\to l_2}\cdot( \|\mathrm{Diag}(\sqrt{p_1}, \cdots, \sqrt{p_d} )\cdot B\Vert_{F}+\Vert B\Vert_{l_2\to l_2})\cdot\Vert\theta\circ\frac{1}{p}\Vert^2_{2}.
    \end{align}

    (ii). An exact calculation of $\mathbb{E}\Vert B^\T\mathrm{Diag}(\delta^{(1)})A_{\theta, p}\mathrm{Diag}(\delta^{(1)})B\Vert_{F}^2$ is given in the Appendix \ref{appendix_e}.
\end{remark}


\subsection{Low-rank approximation with sparsified sketches}
Low-rank approximation is fundamental in data analysis, signal processing, and machine learning. Many large data matrices, such as user–item ratings, gene expression profiles, or kernel matrices, are approximately low-rank due to latent factors or redundancy~\cite{udell2019big}. Exploiting this structure enables efficient algorithms and reduces storage and computation in downstream tasks.

Recent work~\cite{udell2019big,Stanislav_LAA} shows that large matrices with small spectral norm admit nearby low-rank approximations in the \emph{maximum norm}, providing uniform entrywise guarantees. Building on this perspective, we propose a randomized construction using \emph{sparsified random vectors}. Instead of dense Gaussian sketches, we use probes of the form $\delta \circ \xi$, where $\delta$ is a Bernoulli mask and $\xi$ is sub-Gaussian. This reduces storage and matrix–vector costs while introducing dependence on the missingness probability $p$. Our sparse Hanson–Wright inequalities yield sharp analysis, showing that sparsification preserves robust max-norm guarantees with explicit dependence on matrix coherence.

Sparsified sketches have also appeared in stochastic trace estimation~\cite{kalantzis2024asynchronous}, analyzed via sparse Hanson–Wright inequalities~\cite{Zhou_Bernoulli}. Our results extend this principle to low-rank approximation, demonstrating both computational savings and theoretical control.


Before stating the main result in this subsection we fix notation.  
For a rank-$k$ matrix $X\in\mathbb{R}^{m\times n}$ with thin SVD $X=U\Sigma V^{\top}$, the column and row-space coherences are

\[
\mu_{\mathrm{col}}(X)=\frac{m}{k}\max_{i\in[m]}\|U^{\top}e_{i}\|_{2}^{2}, \qquad
\mu_{\mathrm{row}}(X)=\frac{n}{k}\max_{j\in[n]}\|V^{\top}\tilde e_{j}\|_{2}^{2},
\]
where $e_{i}\!\in\!\mathbb{R}^{m}$ and $\tilde e_{j}\!\in\!\mathbb{R}^{n}$ are the standard basis vectors.

We propose a randomized algorithm to construct a low-rank approximation of a rank-$k$ matrix $X$ given in Algorithm~\ref{alg:low_rank}. The analysis of Algorithm~\ref{alg:low_rank} uses the sparse Hanson-Wright inequality for a sparse sub-Gaussian random vector.

\begin{algorithm}[h!]
\caption{Low rank approximation with sparsified sketches}
\begin{algorithmic}[1]
\INPUT A rank-$k$ matrix $X\in \R^{m\times n}$, target rank $r$, sparsity parameter $p$. 
\STATE Compute singular value decomposition $X=U\Sigma V^\top$ where $U\in\mathbb{R}^{m\times k}, \Sigma\in \mathbb{R}^{k\times k}$ and $V\in \mathbb{R}^{n\times k}$.  Define \[\widetilde{U}=U\Sigma^{1/2}=[\tilde{u}^\T_1, \cdots, \tilde{u}^\T_m]^\T,\quad \widetilde{V}=V\Sigma^{1/2}=[\tilde{v}^\top_1, \cdots, \tilde{v}^\top_n]^\top.\]
\STATE  Let $Q=[Q_1, \cdots, Q_r]\in\mathbb{R}^{k\times r}$ be a random matrix with independent copies of $r^{-\frac{1}{2}}\delta\circ\xi$, where $\delta\sim \mathrm{Bernoulli}(p)$ and $\xi$ is a centered sub-gaussian random variable satisfying $\Vert \xi\Vert_{\Psi_2}\le 1$ and $\mathbb{E}\xi^2=1$. The random variable $\delta$ is independent of $\xi$.
\OUTPUT Matrix $Y\in \R^{m\times n}$ such that $Y_{ij}=\frac{1}{p} \tilde u_i QQ^\top \tilde v_j^\top$.
\end{algorithmic}\label{alg:low_rank}
\end{algorithm}

\begin{theorem}\label{Theo_application_low}
    Assume that $m, n, k\in\mathbb{N}$ satisfies $k\le \min\{m, n\}$. Let $0<p\le 1$ and $\varepsilon>0$. Then for every matrix $X\in\mathbb{R}^{m\times n}$ of rank $k$ and any constant $r$ such that $$k\ge r\ge C_1\log \frac{mn}{\eta}\cdot\max\{ \frac{ p}{\varepsilon^2}, \frac{1 }{\varepsilon} \},$$ then Algorithm~\ref{alg:low_rank} outputs a matrix $Y\in\mathbb{R}^{m\times n}$ of rank no more than $r$ such that with probability at least $1-\eta$,
    \begin{align}
        \Vert X-Y\Vert_{\infty}\le \frac{k\varepsilon}{p\sqrt{mn}}\sqrt{\mu_\mathrm{col}\mu_\mathrm{row}}\cdot\Vert X\Vert_{l_2\to l_2},
    \end{align}
    where $\mu_\mathrm{col}$ and $\mu_\mathrm{row}$ are the coherence of the column and row spaces of $X$.
\end{theorem}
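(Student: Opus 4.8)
The plan is to reduce the maximum-norm guarantee to an entrywise tail estimate, apply the sparse Hanson–Wright inequality (Theorem \ref{Theo_main1} with $\alpha=2$, equivalently Zhou's bound \eqref{Eq_intro_zhou_HW}) to each fixed entry, and close with a union bound over the $mn$ entries. Throughout write $w^{(l)}=\delta^{(l)}\circ\xi^{(l)}\in\mathbb{R}^{k}$ for the $l$-th sparsified probe, so that $QQ^\top=\frac1r\sum_{l=1}^r w^{(l)}(w^{(l)})^\top$. First I would record unbiasedness: since coordinates are independent and $\xi$ is centered, $\mathbb{E}[w^{(l)}(w^{(l)})^\top]=pI_k$ (diagonal entries $\mathbb{E}[\delta_a\xi_a^2]=p$, off-diagonal entries vanish), so $\mathbb{E}[\tfrac1p QQ^\top]=I_k$ and hence $\mathbb{E}Y_{ij}=\tilde u_i\tilde v_j^\top=X_{ij}$.

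Next, fix $(i,j)$ and set $a=\tilde u_i^\top$, $b=\tilde v_j^\top\in\mathbb{R}^k$ and $A^{(ij)}=ab^\top$. Using $\tilde u_i w^{(l)}(w^{(l)})^\top\tilde v_j^\top=(w^{(l)})^\top A^{(ij)} w^{(l)}$, the deviation becomes $Y_{ij}-X_{ij}=\frac1{pr}\bigl(W^\top \mathbf A W-\mathbb{E}W^\top\mathbf A W\bigr)$, where $W=(w^{(1)},\dots,w^{(r)})\in\mathbb{R}^{kr}$ is a sparse sub-Gaussian vector and $\mathbf A=I_r\otimes A_s^{(ij)}$ is block diagonal with $A_s^{(ij)}=\frac12\bigl(A^{(ij)}+(A^{(ij)})^\top\bigr)$, the symmetrization being permitted by the corresponding remark since it leaves the quadratic form unchanged. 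Applying Theorem \ref{Theo_main1} with $\alpha=2$ and $L(2)\le 1$ gives $\mathbb{P}\{|W^\top\mathbf A W-\mathbb{E}|\ge\tau\}\le 2\exp\bigl(-c\min\{\tau^2/\gamma_1(\mathbf A),\,\tau/\|\mathbf A\|_{l_2\to l_2}\}\bigr)$, and the block structure yields $\gamma_1(\mathbf A)=r\,\gamma_1(A_s^{(ij)})$ and $\|\mathbf A\|_{l_2\to l_2}=\|A_s^{(ij)}\|_{l_2\to l_2}$.

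The key computation is to bound both quantities by the target scale $R:=\frac{k}{\sqrt{mn}}\sqrt{\mu_{\mathrm{col}}\mu_{\mathrm{row}}}\,\|X\|_{l_2\to l_2}$. For the rank-one blocks one has $\|A_s^{(ij)}\|_F^2=\frac12\bigl((a\cdot b)^2+\|a\|^2\|b\|^2\bigr)\le\|a\|^2\|b\|^2$ and, from the eigenvalues $a\cdot b\pm\|a\|\|b\|$ of $ab^\top+ba^\top$, $\|A_s^{(ij)}\|_{l_2\to l_2}=\frac12\bigl(|a\cdot b|+\|a\|\|b\|\bigr)\le\|a\|\|b\|$. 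Combining these with the coherence bounds $\|\tilde u_i\|^2\le\frac km\mu_{\mathrm{col}}\|X\|_{l_2\to l_2}$ and $\|\tilde v_j\|^2\le\frac kn\mu_{\mathrm{row}}\|X\|_{l_2\to l_2}$ (which follow from $\tilde u_i^\top=\Sigma^{1/2}U^\top e_i$, $\|\Sigma\|_{l_2\to l_2}=\|X\|_{l_2\to l_2}$ and the definitions of $\mu_{\mathrm{col}},\mu_{\mathrm{row}}$) gives $\|a\|\|b\|\le R$, and, since $p\le 1$ forces $\gamma_1(A_s^{(ij)})\le p\|A_s^{(ij)}\|_F^2$, one obtains $\gamma_1(\mathbf A)\le prR^2$ and $\|\mathbf A\|_{l_2\to l_2}\le R$.

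Finally, taking $\tau=prs$ with $s=\frac\varepsilon pR$ turns the two terms of the exponent into $\frac{r\varepsilon^2}{p}$ and $r\varepsilon$, so $\mathbb{P}\{|Y_{ij}-X_{ij}|\ge\frac\varepsilon pR\}\le 2\exp\bigl(-c\min\{r\varepsilon^2/p,\,r\varepsilon\}\bigr)$. A union bound over the $mn$ entries, together with the identity $\min\{\varepsilon^2/p,\varepsilon\}\cdot\max\{p/\varepsilon^2,1/\varepsilon\}=1$, shows that the hypothesis $r\ge C_1\log\frac{mn}{\eta}\max\{p/\varepsilon^2,1/\varepsilon\}$ makes $c\min\{r\varepsilon^2/p,r\varepsilon\}\ge\log(2mn/\eta)$ for $C_1$ large enough, delivering failure probability at most $\eta$; the rank bound is immediate since $Y=\frac1p\widetilde U(QQ^\top)\widetilde V^\top$ and $QQ^\top$ has rank at most $r$. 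I expect the main obstacle to be the bookkeeping of the third step: cleanly recasting the sum of $r$ i.i.d.\ quadratic forms as a single block-diagonal Hanson–Wright application, evaluating $\gamma_1$ and the spectral norm of the non-symmetric rank-one blocks, and tracking every factor of $p,r,k,m,n$ so that all parameters collapse onto the coherence-based scale $R$.
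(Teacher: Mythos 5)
Your proposal is correct and follows essentially the same route as the paper's proof: both recast each entry deviation as a Hanson--Wright quadratic form in the stacked sparse sub-Gaussian vector with block-diagonal matrix $I_r\otimes \tilde u_i^\top\tilde v_j$, apply Theorem~\ref{Theo_main1} at $\alpha=2$ using $\gamma_1(A)\le p\Vert A\Vert_F^2$, invoke the coherence bounds $\Vert\tilde u_i\Vert_2\Vert\tilde v_j\Vert_2\le \frac{k}{\sqrt{mn}}\sqrt{\mu_{\mathrm{col}}\mu_{\mathrm{row}}}\,\Vert X\Vert_{l_2\to l_2}$, and finish with a union bound over the $mn$ entries. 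The only cosmetic differences are that you symmetrize the rank-one blocks explicitly (computing their eigenvalues), whereas the paper works with the non-symmetric matrix directly via its remark on general square matrices, and you spell out the unbiasedness computation that the paper states without proof.
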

\begin{remark}
 Recently, Budzinskiy \cite[Theorem 3.4]{Stanislav_LAA} proved that under the condition $k\ge r\ge \frac{C_1 \log mn}{\varepsilon^2}$,
     there exists a matrix $Y\in\mathbb{R}^{m\times n}$ of rank no more than $r$ such that
    \begin{align}
        \Vert X-Y\Vert_{\infty}\le \frac{k\varepsilon}{\sqrt{mn}}\sqrt{\mu_\mathrm{col}\mu_\mathrm{row}}\cdot\Vert X\Vert_{l_2\to l_2}.
    \end{align}
    Relative to Budzinskiy’s theorem, Theorem \ref{Theo_application_low} widens the admissible range of $r$ at the cost of a looser error bound. In practice, we can trade off rank versus accuracy according to the task at hand.  Our proof shows that $Y$ can be constructed based on a significantly sparser matrix $Q$, yielding substantial savings in storage and matrix–vector products compared with the dense approximation proposed in \cite{Stanislav_LAA}.
\end{remark}

\subsection{Sparse \texorpdfstring{$\alpha$}{alpha}-sub-exponential concentration}

In this subsection we establish a concentration inequality for random vectors with independent $\alpha$-sub-exponential coordinates; such bounds are widely used \cite{Rudelson_ECP}.  The derivation is standard, so the proof is relegated to the appendix. 

\begin{theorem}\label{Theorem_application}
    Assume that $\{\delta_{i}, 1\le i\le n \}$ and $\{\zeta_{i}, 1\le i\le n \}$ are two independent sequences of random variables, where $\delta_{i}\sim\textnormal{Bernoulli}(p)$ are independent and $\zeta_{i}$ are independent $\alpha$-sub-exponential random variables with mean $0$ and variance $1$.
    Consider a random vector $\xi=(\xi_{1}, \cdots, \xi_{n})$  with $\xi_{i}=\delta_{i}\cdot\zeta_{i}$. Let $A$ be an $m\times n$ fixed matrix. Then for $0<\alpha\le 2$ and $t\ge 0$,
    \begin{align}
        \mathbb{P}\Big\{ \big\vert \Vert A\xi\Vert_{2}-\sqrt{p}\Vert A\Vert_{F} \big\vert>L^{2}(\alpha)t \Big\}\le 2\exp\Big\{-c(\alpha)\min \big\{\big(\frac{t}{\Vert A\Vert_{l_2\to l_2}}\big)^{2}, \big(\frac{t}{\Vert A\Vert_{l_2\to l_2}} \big)^{\alpha} \big\}    \Big\},
    \end{align}
    where $L(\alpha)=\max_{i}\Vert \zeta_i\Vert_{\Psi_{\alpha}}$ and $c(\alpha)>0$ is a constant depending only on $\alpha$.
\end{theorem}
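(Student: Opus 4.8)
The plan is to deduce the concentration of the norm $\|A\xi\|_2$ from that of the quadratic form $\|A\xi\|_2^2=\xi^\top M\xi$ with $M:=A^\top A$, and then invoke Theorem~\ref{Theo_main1}. Note first that $M$ is symmetric (indeed positive semidefinite), so Theorem~\ref{Theo_main1} applies. Since $\mathbb{E}\xi_i=0$, $\mathbb{E}\xi_i^2=p$, and the $\xi_i$ are independent, the cross terms vanish and
\[
\mathbb{E}\,\xi^\top M\xi=p\sum_i M_{ii}=p\,\mathrm{tr}(A^\top A)=p\|A\|_F^2.
\]
Writing $Z:=\|A\xi\|_2$ and $\mu:=\sqrt p\,\|A\|_F=\sqrt{\mathbb{E}Z^2}$, I would use the elementary containment
\[
\{|Z-\mu|>s\}\subseteq\{|Z^2-\mu^2|>\max(s\mu,s^2)\},
\]
valid for $s\ge 0$: on the upper tail $Z>\mu+s$ forces $Z^2-\mu^2>2\mu s+s^2\ge\max(s\mu,s^2)$, while on the lower tail $Z<\mu-s$ is possible only when $s<\mu$, in which case $\mu^2-Z^2>2\mu s-s^2>s\mu\ge s^2$. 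Taking $s=L(\alpha)^2t$ reduces the theorem to a tail bound for $|\xi^\top M\xi-\mathbb{E}\xi^\top M\xi|$ at level $D:=\max(s\mu,s^2)$.

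Next I would apply Theorem~\ref{Theo_main1} to $M$ with all $p_i=p$, writing $D=L(\alpha)^2\tau$ so that $\tau=\max\!\big(t\sqrt p\,\|A\|_F,\;L(\alpha)^2t^2\big)$. The two matrix parameters appearing there are controlled by the spectral and Frobenius norms of $A$: since $p^2\le p\le1$,
\[
\sum_k M_{kk}^2p+\sum_{i\ne j}M_{ij}^2p^2\le p\|M\|_F^2=p\sum_k\sigma_k^4\le p\,\|A\|_{l_2\to l_2}^2\|A\|_F^2,
\]
where $\sigma_k$ are the singular values of $A$, and $\|M\|_{l_2\to l_2}=\|A^\top A\|_{l_2\to l_2}=\|A\|_{l_2\to l_2}^2$. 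Denoting by $V$ the left-hand side above, Theorem~\ref{Theo_main1} yields the bound $2\exp(-c(\alpha)\min\{\tau^2/V,(\tau/\|A\|_{l_2\to l_2}^2)^{\alpha/2}\})$.

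Finally I would lower bound the two terms in the exponent using the two lower bounds on $\tau$ separately, exploiting that both terms are increasing in $\tau$. From $\tau\ge t\sqrt p\,\|A\|_F$ and $V\le p\|A\|_{l_2\to l_2}^2\|A\|_F^2$ one gets $\tau^2/V\ge t^2/\|A\|_{l_2\to l_2}^2=(t/\|A\|_{l_2\to l_2})^2$; from $\tau\ge L(\alpha)^2t^2$ one gets $(\tau/\|A\|_{l_2\to l_2}^2)^{\alpha/2}\ge L(\alpha)^\alpha(t/\|A\|_{l_2\to l_2})^\alpha$. Because each $\zeta_i$ has variance $1$, the standard moment comparison $\|\zeta_i\|_{L_2}\lesssim_\alpha\|\zeta_i\|_{\Psi_\alpha}$ gives $1\lesssim_\alpha L(\alpha)$, hence $L(\alpha)\ge c(\alpha)>0$, so the second term is $\ge c(\alpha)(t/\|A\|_{l_2\to l_2})^\alpha$. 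Combining, $\min\{\tau^2/V,(\tau/\|A\|_{l_2\to l_2}^2)^{\alpha/2}\}\ge c(\alpha)\min\{(t/\|A\|_{l_2\to l_2})^2,(t/\|A\|_{l_2\to l_2})^\alpha\}$, which is exactly the claimed exponent. The main subtlety is the reduction step: one must retain \emph{both} thresholds $s\mu$ and $s^2$, since the Gaussian term $(t/\|A\|_{l_2\to l_2})^2$ comes from $s\mu$ while the heavy-tail term $(t/\|A\|_{l_2\to l_2})^\alpha$ comes from $s^2$; using $s\mu$ alone would produce the weaker exponent $(t\sqrt p\,\|A\|_F/\|A\|_{l_2\to l_2}^2)^{\alpha/2}$, which fails to recover the $\alpha$-power when $\sqrt p\,\|A\|_F\ll\|A\|_{l_2\to l_2}$ (e.g.\ for near rank-one $A$ with small $p$).
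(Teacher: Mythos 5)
Your proposal is correct and takes essentially the same route as the paper's own proof: reduce to the quadratic form $\xi^\top A^\top A\,\xi$, apply Theorem~\ref{Theo_main1} with the bounds $\sum_k M_{kk}^2p+\sum_{i\neq j}M_{ij}^2p^2\le p\Vert A\Vert_{l_2\to l_2}^2\Vert A\Vert_F^2$ and $\Vert A^\top A\Vert_{l_2\to l_2}=\Vert A\Vert_{l_2\to l_2}^2$, and pass from the square to the norm by the same elementary device (your containment $\{|Z-\mu|>s\}\subseteq\{|Z^2-\mu^2|>\max(s\mu,s^2)\}$ is exactly the paper's substitution $u=\max(\delta,\delta^2)$, $\delta^2=\min(u,u^2)$ in rescaled form, with the two thresholds $s\mu$ and $s^2$ producing the Gaussian and $\alpha$-power terms respectively). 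If anything, your explicit tracking of $L(\alpha)$ together with the observation $1\lesssim_\alpha L(\alpha)$ from the unit-variance assumption is slightly more careful than the paper's ``without loss of generality $L(\alpha)=1$'', since rescaling $\zeta_i$ would also rescale the variance.
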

\begin{remark}
    (i). In the case $0<\alpha\le 1$ and $p=1$, G\"{o}tze et al. \cite[Proposition 2.1]{Gotze_EJP} proved that
    \begin{align}
        &\mathbb{P}\Big\{ \big\vert \Vert A\xi\Vert_{2}-\Vert A\Vert_{F} \big\vert>L^{2}(\alpha)t \Big\}\\
        \le& 2\exp\Big\{-c(\alpha)\min \big\{\frac{t^2}{\Vert A\Vert^{2-\alpha}_F\Vert A\Vert^{\alpha}_{l_2\to l_2}}, \big(\frac{t}{\Vert A\Vert_{l_2\to l_2}} \big)^{\alpha} \big\}    \Big\}.
    \end{align}
    Compared with their result, Theorem \ref{Theorem_application} gives an improved tail probability due to that
    \begin{align}
        \Vert A\Vert_{l_2\to l_2}^2\le \Vert A\Vert^{2-\alpha}_F\Vert A\Vert^{\alpha}_{l_2\to l_2}.
    \end{align}

    (ii). In the case $\alpha=2$ and $p=1$, Rudelson and Vershynin \cite[Theorem 2.1]{Rudelson_ECP} proved that
    \begin{align}\label{Eq_application_remark_1}
        \mathbb{P}\Big\{ \big\vert \Vert A\xi\Vert_{2}-\Vert A\Vert_{F} \big\vert>L^{2}(\alpha)t \Big\}
        \le 2\exp\Big\{-\frac{ct^2}{\Vert A\Vert^{2}_{l_2\to l_2}}    \Big\}.
    \end{align}
    Theorem \ref{Theorem_application} recovers this result in the case $\alpha=2$ and $p=1$.
\end{remark}

\begin{remark}
When \(\alpha = 2\), we have the following inequality:
\[
\|\xi_i\|_{\Psi_2} \leq \|\delta_i\|_{\Psi_2} \|\zeta_i\|_{\Psi_2} \leq L \log^{-1/2}(1 + p^{-1}),
\]
where \(L = \max_i \|\zeta_i\|_{\Psi_2}\). This implies that \(\xi_i\) is a sub-Gaussian random variable. Consequently, we can derive a result of the same type as in Theorem \ref{Theorem_application} from \eqref{Eq_application_remark_1}.

In particular, Rudelson and Vershynin's result gives us:
\[
\mathbb{P}\left\{\left|\|A\xi\|_2^2 - p \|A\|_F^2\right| > L^2 t\right\} \leq 2 \exp\left\{-\frac{c \log^2(1 + p^{-1}) p t^2}{\|A\|_{l_2 \to l_2}^2}\right\}.
\]

Compared to this result, Theorem \ref{Theorem_application} provides a better tail probability when \(p\) is close to 0. This serves as a good example to illustrate why we study sparse Hanson-Wright type inequalities.
\end{remark}

\section{Preliminaries}

\subsection{Tails and Moments}
In this subsection, we first introduce
the following lemma, which provides the link between $L_{r}$ estimates and tail probability inequalities. Although such type results are by now common, we give a brief proof for the sake of reading.
\begin{lemma}\label{Lem_Moments_to_tailbound}
	Let $\xi$ be a random variable such that for $r\ge r_{0}$
	\begin{align}
		\Vert \xi\Vert_{L_r}\le \sum_{k=1}^{m}C_{k}r^{\beta_{k}}+C_{m+1},\nonumber
	\end{align}
	where $C_{1},\cdots, C_{m+1}> 0$ and $\beta_{1},\cdots, \beta_{m}>0$. Then for $t>0$,
	\begin{align}
		\mathbb{P}\big\{ \vert \xi\vert>e(mt+C_{m+1}) \big\}\le e^{r_{0}}\exp\Big(-\min\big\{\big(\frac{t}{C_{1}}\big)^{1/\beta_{1}},\cdots, \big(\frac{t}{C_{m}}\big)^{1/\beta_{m}}\big\}\Big).\nonumber
	\end{align}
\end{lemma}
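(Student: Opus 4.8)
The plan is to use the polynomial moment method: bound the tail of $|\xi|$ by applying Markov's inequality to $|\xi|^r$, and then optimize over the free exponent $r$. Concretely, I would fix $t>0$ and set
\[
r^\ast := \min_{1\le k\le m}\left(\frac{t}{C_k}\right)^{1/\beta_k},
\]
which is precisely the quantity appearing in the exponent of the claimed bound. The whole argument then splits according to whether $r^\ast$ is large enough to invoke the hypothesis.

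In the main case $r^\ast\ge r_0$, I would apply the moment assumption at $r=r^\ast$. By definition of the minimum, $\left(t/C_k\right)^{1/\beta_k}\ge r^\ast$ for every $k$, which rearranges to $C_k (r^\ast)^{\beta_k}\le t$. Summing over $k$ gives $\Vert\xi\Vert_{L_{r^\ast}}\le mt+C_{m+1}$. Now I would run Markov with the threshold $u=e(mt+C_{m+1})$:
\[
\mathbb{P}\big\{|\xi|>u\big\}=\mathbb{P}\big\{|\xi|^{r^\ast}>u^{r^\ast}\big\}\le\left(\frac{\Vert\xi\Vert_{L_{r^\ast}}}{u}\right)^{r^\ast}\le\left(\frac{mt+C_{m+1}}{e(mt+C_{m+1})}\right)^{r^\ast}=e^{-r^\ast}.
\]
Since $e^{r_0}\ge 1$, this is dominated by $e^{r_0}e^{-r^\ast}$, which is exactly the right-hand side of the lemma with $r^\ast=\min_k(t/C_k)^{1/\beta_k}$.

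The only genuine subtlety is the bookkeeping around the constraint $r\ge r_0$, under which the moment hypothesis is available. This is what the otherwise mysterious prefactor $e^{r_0}$ absorbs: in the complementary case $r^\ast<r_0$, the moment bound need not hold at $r=r^\ast$, but here the target itself is vacuous, since $e^{r_0}e^{-r^\ast}>e^{r_0}e^{-r_0}=1$ and any probability is at most $1$. Thus the inequality holds trivially in this regime, and combining the two cases completes the proof. I do not anticipate a real obstacle beyond verifying the direction of these two slack inequalities; the estimate is driven entirely by the choice of $r^\ast$ that makes every term $C_k r^{\beta_k}$ simultaneously at most $t$.
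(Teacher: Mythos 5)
Your proposal is correct and is essentially the paper's own proof: both choose the exponent $r^\ast=f(t)=\min_k(t/C_k)^{1/\beta_k}$, note $C_k(r^\ast)^{\beta_k}\le t$ so that $\Vert\xi\Vert_{L_{r^\ast}}\le mt+C_{m+1}$, apply Markov's inequality at threshold $e(mt+C_{m+1})$ to get $e^{-r^\ast}$, and dispose of the regime $r^\ast<r_0$ by the trivial bound absorbed into the prefactor $e^{r_0}$.
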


\begin{proof}
	Define the following function:
	\begin{align}
		f(t):=\min\big\{\big(\frac{t}{C_{1}}\big)^{1/\beta_{1}},\cdots, \big(\frac{t}{C_{m}}\big)^{1/\beta_{m}}\big\}.\nonumber
	\end{align}
	If $f(t)\ge r_{0}$, then
	\begin{align}
		\Vert \xi\Vert_{L_{f(t)}}\le \sum_{k=1}^{m}C_{k}f(t)^{\beta_{k}}+C_{m+1}\le mt+C_{m+1}.\nonumber
	\end{align}
	Hence,  Markov's inequality yields
	\begin{align}
		\mathbb{P}\{\vert \xi\vert>e(mt+C_{m+1})  \}\le \mathbb{P}\{\vert \xi\vert >e \Vert \xi\Vert_{L_{f(t)}}  \}\le e^{-f(t)}.\nonumber
	\end{align}
	As for $f(t)<r_{0}$, we have the following trivial bound
	\begin{align}
		\mathbb{P}\{\vert \xi\vert>e(mt+C_{m+1})  \}\le 1\le e^{r_{0}}e^{-f(t)}.\nonumber
	\end{align}
	Hence, we have for $t\ge 0$
	\begin{align}
		\mathbb{P}\{\vert \xi\vert>e(mt+C_{m+1})  \}\le  e^{r_{0}}e^{-f(t)}.\nonumber
	\end{align}
\end{proof}
The next lemma estimates the $r$-th moments of random linear and bilinear sums.
\begin{lemma}[Theorem 1.1 in \cite{Hitczenko_studiamath}]\label{Lem_Lp_linearsum}
	Let $\xi_{1}, \cdots, \xi_{n}$ be independent symmetric random variables with log-convex tails, i.e., $\log \mathbb{P}\{\vert\xi_{i}\vert\ge t \}$ is a convex function for $t\ge 0$. Then for $r\ge 2$
	\begin{align}
		\Big\Vert \sum_{i}\xi_{i}\Big\Vert_{L_{r}}\asymp  \Big(\sum_{i}\mathbb{E}\vert \xi_{i}\vert^{r}   \Big)^{1/r}+\sqrt{r}\Big( \sum_{i}\mathbb{E}\xi_{i}^{2}  \Big)^{1/2}.\nonumber
	\end{align}
\end{lemma}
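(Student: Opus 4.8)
The plan is to prove the two-sided estimate by passing through matching tail bounds for $S:=\sum_i\xi_i$ and then converting between tails and moments. Write $\sigma_i^2:=\mathbb{E}\xi_i^2$. Lemma~\ref{Lem_Moments_to_tailbound} supplies the moments-to-tails direction if one wishes to restate the conclusion as a tail inequality, but for establishing the moment estimate itself I would use the reverse identity $\mathbb{E}|S|^r=\int_0^\infty r t^{r-1}\mathbb{P}\{|S|>t\}\,dt$. The feature that makes the statement delicate is that the constants are \emph{universal} (independent of $r$): a generic Rosenthal inequality would lose a factor of order $r/\log r$ on the $\big(\sum_i\mathbb{E}|\xi_i|^r\big)^{1/r}$ term, and the entire role of the log-convex tail hypothesis is to remove this loss.

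The heart of the argument is a two-sided tail estimate of the form
\[
\mathbb{P}\{|S|>t\}\asymp \exp\Big(-c\,\frac{t^2}{\sum_i\sigma_i^2}\Big)+\sum_i\mathbb{P}\{|\xi_i|>c\,t\}
\]
in the appropriate range of $t$. For the upper bound I would truncate each summand at the level of its $r$-th quantile, $\xi_i=\xi_i\mathbf{1}_{\{|\xi_i|\le M\}}+\xi_i\mathbf{1}_{\{|\xi_i|> M\}}$: the bounded (central) parts obey a Bernstein/Bennett-type bound producing the Gaussian factor $\exp(-ct^2/\sum_i\sigma_i^2)$ at moderate scales, while the large parts contribute the ``one big jump'' sum $\sum_i\mathbb{P}\{|\xi_i|>c\,t\}$. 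Here concavity of $\phi_i(t):=-\log\mathbb{P}\{|\xi_i|>t\}$ (equivalently, log-convexity of the tails) is precisely what guarantees that in the large-deviation regime a single large summand dominates, so that multi-jump and mixed events are negligible. For the matching lower bound, restricting to the event that one fixed $|\xi_j|$ exceeds $2t$ while the remaining sum stays controlled recovers the term $\sum_i\mathbb{P}\{|\xi_i|>2t\}$, and a central-limit / Paley--Zygmund estimate at moderate scales recovers the Gaussian factor.

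With the tail estimate in hand, I would substitute it into $\mathbb{E}|S|^r=\int_0^\infty r t^{r-1}\mathbb{P}\{|S|>t\}\,dt$ and integrate the two pieces separately. By the standard Laplace/Gamma computation, the Gaussian piece integrates to a quantity of order $\big(\sqrt r\,(\sum_i\sigma_i^2)^{1/2}\big)^r$, and the one-big-jump piece integrates to order $\sum_i\mathbb{E}|\xi_i|^r$; taking $r$-th roots and using $(a+b)^{1/r}\asymp a^{1/r}+b^{1/r}$ for $a,b\ge 0$ yields exactly
\[
\Big\|\sum_i\xi_i\Big\|_{L_r}\asymp\Big(\sum_i\mathbb{E}|\xi_i|^r\Big)^{1/r}+\sqrt r\Big(\sum_i\mathbb{E}\xi_i^2\Big)^{1/2}.
\]
Because the tail bound was two-sided, both inequalities follow simultaneously.

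The main obstacle is the two-sided tail estimate itself, and within it two points use the log-convex hypothesis in an essential way. First, the one-big-jump \emph{upper} bound: one must show that the probability of two or more summands being simultaneously large is negligible compared with a single jump, which is exactly where concavity of $\phi_i$ enters and where a naive union bound is too lossy. Second, the Gaussian \emph{lower} bound at moderate deviations fails for light-tailed variables — a single Rademacher variable already violates the lower-bound direction $\sqrt r\,(\sum_i\sigma_i^2)^{1/2}\lesssim\|S\|_{L_r}$ — so one must use that log-convex tails are no lighter than exponential to ensure the central-limit regime persists down to the scale $\sqrt r\,(\sum_i\sigma_i^2)^{1/2}$. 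Securing universal, $r$-independent constants throughout, rather than the $r/\log r$ factor of generic Rosenthal bounds, is the sharp feature that the hypothesis is designed to deliver.
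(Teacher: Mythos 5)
First, a point of comparison: the paper does not prove Lemma~\ref{Lem_Lp_linearsum} at all --- it is imported verbatim as Theorem~1.1 of Hitczenko, Montgomery-Smith and Oleszkiewicz \cite{Hitczenko_studiamath}, so there is no in-paper argument to measure your proposal against. The original source, moreover, establishes the two-sided moment inequality directly (a Rosenthal-type inequality with constants uniform in $r$ for this class), rather than by first proving a two-sided tail estimate for the sum and integrating, which is the route you propose.

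Judged on its own, your outline has the right architecture and locates the role of the hypothesis correctly: the $r/\log r$ loss in generic Rosenthal bounds, the subadditivity $N_i(a+b)\le N_i(a)+N_i(b)$ of the concave rate functions $N_i(t)=-\log\mathbb{P}\{|\xi_i|\ge t\}$ as the mechanism behind single-jump dominance, and the fact that log-convex tails are no lighter than exponential (since $N_i(t)/t$ is nonincreasing), which is what excludes the Rademacher counterexample. The tails-to-moments integration at the end is routine and correct. But the proposal has a genuine gap: the asserted two-sided tail estimate $\mathbb{P}\{|S|>t\}\asymp \exp(-ct^2/\sum_i\sigma_i^2)+\sum_i\mathbb{P}\{|\xi_i|>ct\}$ carries the entire content of the theorem, and your sketch of it does not go through as written. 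On the upper-bound side, a single truncation plus Bernstein cannot produce the term $\sum_i\mathbb{P}\{|\xi_i|>ct\}$: truncating at $M\asymp t$ makes Bernstein's linear branch degenerate to $\exp(-ct/M)=\exp(-c')$, a useless constant, while truncating at $M\ll t$ leaves a jump term $\sum_i\mathbb{P}\{|\xi_i|>M\}$, which is much larger than claimed. What is actually required is a Fuk--Nagaev-type analysis stratified by the number $k$ of coordinates exceeding a threshold, in which the subadditivity of the $N_i$ controls each $k$-jump configuration and one then sums over the choice of indices and the partition of the level $t$ with summable combinatorial factors. That analysis is the theorem-level work, and it is entirely deferred.

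The lower bound has a parallel gap. The Gaussian branch $\mathbb{P}\{|S|>c\sqrt{r}\,\sigma\}\ge e^{-Cr}$, uniformly in $r\ge 2$, does not follow from the central limit theorem or from Paley--Zygmund at the fourth-moment level (heavy tails can make $\mathbb{E}S^4$ enormous relative to $\sigma^4$). A Paley--Zygmund argument at level $r$ needs the moment-growth regularity $\|S\|_{L_{2r}}\le C\|S\|_{L_r}$, which is circular unless one first proves regularity of the target expression itself; this is doable --- concavity of $N_i$ gives $N_i(2t)\le 2N_i(t)$, hence $\|\xi_i\|_{L_{2r}}\le C\|\xi_i\|_{L_r}$, and the $\ell_{2r}\subset\ell_r$ monotonicity aggregates this over $i$ --- but it must be said, and your sketch omits it. Likewise, at the crossover where $\sqrt{r}\,\sigma$ leaves the collective regime, the single-jump term must take over as the lower bound, and that handoff rests on $N_i(st)\le sN_i(t)$ for $s\ge 1$; you gesture at this correctly but do not carry it out. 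In short: the proposal is a credible folklore roadmap with the hypothesis correctly deployed, but everything difficult is concentrated in an unproven tail claim that is essentially equivalent to the lemma, so it is an outline with a genuine gap rather than a proof, and in any case it follows a different route from the moment-level argument of \cite{Hitczenko_studiamath} that the paper relies on.
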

In the following lemmas of this subsection, we assume that $\xi_{1}, \cdots, \xi_{n}\stackrel{i.i.d.}{\sim}\mathcal{W}_{s}(\alpha)$ and $A=(a_{ij})_{n\times n}$ is a fixed symmetric diagonal-free matrix. Assume further that $\tilde{\xi}_{i}$ is an independent copy of $\xi_{i}$, $1\le i\le n$. For any $1\le \alpha\le 2$, let $\alpha^{*}$ be its conjugate exponent, i.e., $1/\alpha+1/\alpha^{*}=1$.
\begin{lemma}[Theorem 6.1 in \cite{Adamczak_PTRF}]\label{Lem_Lp_quadratic_1_2}
	 In the case $1\le \alpha\le 2$, we have  for $r\ge 2$
\begin{align}
		\Big\Vert \sum_{i,j}a_{ij}\xi_{i}\tilde{\xi}_{j}\Big\Vert_{L_{r}}\asymp_{\alpha}&r^{1/2}\Vert A\Vert_{F}+r\Vert A\Vert_{l_{2}\to l_{2}}+r^{1/\alpha}\Vert A\Vert_{l_{\alpha^{*}}(l_{2})}\nonumber\\
		&+r^{(\alpha+2)/2\alpha}\Vert A\Vert_{l_{2}\to l_{\alpha^{*}}}+r^{2/\alpha}\Vert A\Vert_{l_{\alpha}\to l_{\alpha^{*}}},\nonumber
	\end{align}
    where $\Vert A\Vert_{l_{\alpha*}(l_2)}$ is defined in \eqref{matrix_norm}.
\end{lemma}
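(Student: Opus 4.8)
The plan is to treat $S:=\sum_{i,j}a_{ij}\xi_i\tilde\xi_j$ as a decoupled bilinear chaos and reduce it, by conditioning, to a pair of linear forms whose moments are governed by the Gluskin--Kwapie\'n estimate. Recall that for i.i.d.\ symmetric variables with log-concave tails (which is exactly the regime $1\le\alpha\le 2$, where $t\mapsto t^{\alpha}$ is convex) one has the two-sided linear bound
\[
\Big\Vert \sum_i c_i\xi_i\Big\Vert_{L_r}\asymp_\alpha \sqrt r\,\Vert c\Vert_2+r^{1/\alpha}\Vert c\Vert_{\alpha^*},\qquad r\ge 2,
\]
which specializes correctly to the Gaussian ($\alpha=2$) and Laplace ($\alpha=1$) cases. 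This is the workhorse; note that Lemma~\ref{Lem_Lp_linearsum} is its log-convex ($\alpha\le1$) counterpart and does not apply in the present range.

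For the upper bound I would first condition on $\tilde\xi$ and apply the displayed linear estimate to the (random) coefficient vector $b=A\tilde\xi$, obtaining
\[
\Vert S\Vert_{L_r}\lesssim_\alpha \sqrt r\,\big\Vert\,\Vert A\tilde\xi\Vert_2\,\big\Vert_{L_r}+r^{1/\alpha}\big\Vert\,\Vert A\tilde\xi\Vert_{\alpha^*}\,\big\Vert_{L_r}.
\]
The task then reduces to estimating the $L_r$-moments of the two random norms $\Vert A\tilde\xi\Vert_2$ and $\Vert A\tilde\xi\Vert_{\alpha^*}$, and each is handled by splitting into mean plus fluctuation. The means $\mathbb{E}\Vert A\tilde\xi\Vert_2^2\asymp_\alpha\Vert A\Vert_F^2$ and $\mathbb{E}\Vert A\tilde\xi\Vert_{\alpha^*}\asymp_\alpha\Vert A\Vert_{l_{\alpha^*}(l_2)}$ (estimating each coordinate $\sum_j a_{ij}\tilde\xi_j$ in $L_{\alpha^*}$) produce the terms $\sqrt r\,\Vert A\Vert_F$ and $r^{1/\alpha}\Vert A\Vert_{l_{\alpha^*}(l_2)}$. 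The fluctuations are controlled by viewing $\tilde\xi\mapsto\Vert A\tilde\xi\Vert_2$ and $\tilde\xi\mapsto\Vert A\tilde\xi\Vert_{\alpha^*}$ as Lipschitz functionals, with Lipschitz constants $\Vert A\Vert_{l_2\to l_2}$ and $\Vert A\Vert_{l_2\to l_{\alpha^*}}$ respectively, and applying the sub-gaussian-plus-heavy-tail concentration of Weibull vectors: the sub-gaussian part contributes $r\Vert A\Vert_{l_2\to l_2}$ and $r^{(\alpha+2)/2\alpha}\Vert A\Vert_{l_2\to l_{\alpha^*}}$, while the heaviest deviations, governed by the $l_\alpha\to l_{\alpha^*}$ operator norm, contribute $r^{2/\alpha}\Vert A\Vert_{l_\alpha\to l_{\alpha^*}}$. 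A handful of routine domination inequalities among the norms of \eqref{matrix_norm} then collapse all lower-order contributions into exactly these five terms.

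For the matching lower bound I would activate each norm separately. The term $\sqrt r\,\Vert A\Vert_F$ follows from a second-moment (Paley--Zygmund) argument since $\Vert S\Vert_{L_2}\asymp_\alpha\Vert A\Vert_F$. The remaining four terms arise from large-deviation events in which one or two coordinates of $\xi$ (respectively $\tilde\xi$) are forced to be of order $r^{1/\alpha}$ while the others stay typical; aligning the offending coordinates and the test directions with the optimizers of $\Vert A\Vert_{l_2\to l_2}$, $\Vert A\Vert_{l_{\alpha^*}(l_2)}$, $\Vert A\Vert_{l_2\to l_{\alpha^*}}$, and $\Vert A\Vert_{l_\alpha\to l_{\alpha^*}}$ reproduces each term up to $\alpha$-dependent constants. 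The diagonal-free hypothesis guarantees these one- and two-coordinate perturbations genuinely excite the off-diagonal mass of $A$.

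The main obstacle is the bookkeeping in the upper bound: estimating the moments of the random norms $\Vert A\tilde\xi\Vert_2$ and $\Vert A\tilde\xi\Vert_{\alpha^*}$ \emph{precisely enough} to recover exactly these five matrix norms with the stated powers of $r$, and with no spurious extra terms, demands sharp two-sided concentration for Lipschitz functionals of Weibull vectors together with careful comparisons within the norm family \eqref{matrix_norm}. The matching lower bound is comparatively mechanical but still requires constructing the correct extremal configurations. Since the statement is precisely Theorem~6.1 of \cite{Adamczak_PTRF}, one may alternatively invoke it directly; the sketch above only indicates the architecture of their argument.
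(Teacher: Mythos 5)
The paper does not prove this lemma at all: it is imported verbatim as Theorem~6.1 of \cite{Adamczak_PTRF}, so your closing observation that one may simply invoke that theorem coincides exactly with the paper's treatment. Your conditional sketch (Gluskin--Kwapie\'n applied to the linear form in $\xi$ given $\tilde\xi$, then two-sided moment/concentration estimates for the random norms $\Vert A\tilde\xi\Vert_{2}$ and $\Vert A\tilde\xi\Vert_{\alpha^{*}}$, with Paley--Zygmund and extremal configurations for the lower bound) is a faithful outline of the architecture of the cited proof --- including the correct observation that Lemma~\ref{Lem_Lp_linearsum} is the log-convex counterpart and does not apply here --- but the deferred steps (sharp two-sided control of the random norms yielding exactly the five terms, and the lower-bound constructions) are precisely the technical content of \cite{Adamczak_PTRF}, so as written the citation, not the sketch, is the proof.
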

\begin{lemma}[Example 3 in \cite{Kolesko_SPL}]\label{Lem_Lp_quadratic_0_1}
	In the case $0< \alpha\le 1$, we have  for $r\ge 2$
	\begin{align}
		\Big\Vert \sum_{i,j}a_{ij}\xi_{i}\tilde{\xi}_{j}\Big\Vert_{L_{r}}\asymp_{\alpha}&r^{1/2}\Vert A\Vert_{F}+r\Vert A\Vert_{l_{2}\to l_{2}}+r^{(\alpha+2)/2\alpha}\Vert A\Vert_{l_{2}\to l_{\infty}}+r^{2/\alpha}\Vert A\Vert_{\infty}.\nonumber
	\end{align}
\end{lemma}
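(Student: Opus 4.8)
The plan is to treat this as the $0<\alpha\le 1$ (log-convex tail) instance of a decoupled order-two chaos and reduce everything to the linear-form moment comparison in Lemma~\ref{Lem_Lp_linearsum}. The form $\sum_{i,j}a_{ij}\xi_i\tilde\xi_j$ is already decoupled, since the $\xi$'s and $\tilde\xi$'s are independent, and $\mathcal{W}_s(\alpha)$ with $0<\alpha\le 1$ has log-convex tails because $-\log\mathbb{P}(|\xi|>x)=x^\alpha$ is convex for $\alpha\le 1$. The relevant scalar moments are $\Vert\xi_i\Vert_{L_r}\asymp_\alpha r^{1/\alpha}$ (Stirling applied to $\Gamma(1+r/\alpha)$) and $\mathbb{E}\xi_i^2\asymp_\alpha 1$, so Lemma~\ref{Lem_Lp_linearsum} is available at every conditioning step.

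First I would condition on $\tilde\xi$ and apply Lemma~\ref{Lem_Lp_linearsum} to the linear form $\sum_i b_i\xi_i$ with $b=A\tilde\xi$. Since that lemma is a two-sided equivalence and the two resulting pieces are nonnegative, Fubini ($\Vert S\Vert_{L_r}=\big\Vert\,\Vert S\Vert_{L_r(\xi)}\,\big\Vert_{L_r(\tilde\xi)}$) combined with the triangle inequality gives the genuine two-sided estimate
\[
\Big\Vert\sum_{i,j}a_{ij}\xi_i\tilde\xi_j\Big\Vert_{L_r}\asymp_\alpha \sqrt r\,\big\Vert\,\Vert A\tilde\xi\Vert_2\,\big\Vert_{L_r}+r^{1/\alpha}\big\Vert\,\Vert A\tilde\xi\Vert_r\,\big\Vert_{L_r}.
\]
The $r^{1/\alpha}$ term is the easy one: Fubini again gives $\big\Vert\,\Vert A\tilde\xi\Vert_r\,\big\Vert_{L_r}^r=\sum_i\Vert(A\tilde\xi)_i\Vert_{L_r}^r$, and applying Lemma~\ref{Lem_Lp_linearsum} coordinatewise to each row sum $(A\tilde\xi)_i=\sum_j a_{ij}\tilde\xi_j$ produces $\sqrt r\big(\sum_i\Vert\mathrm{row}_i\Vert_2^r\big)^{1/r}+r^{1/\alpha}\big(\sum_{i,j}|a_{ij}|^r\big)^{1/r}$, i.e. $\ell_r$-aggregated versions of $\Vert A\Vert_{l_2\to l_\infty}$ and $\Vert A\Vert_\infty$.

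The $\sqrt r$ term requires estimating $\big\Vert\,\Vert A\tilde\xi\Vert_2\,\big\Vert_{L_r}$, the $L_r$ norm of the Euclidean norm of $A\tilde\xi$. I would split it into its mean $\big(\mathbb{E}\Vert A\tilde\xi\Vert_2^2\big)^{1/2}\asymp_\alpha\Vert A\Vert_F$ plus a fluctuation, writing $\Vert A\tilde\xi\Vert_2=\sup_{\Vert u\Vert_2\le 1}\langle A^\top u,\tilde\xi\rangle$ and bounding the deviation of this supremum of Weibull linear forms; the natural scales are $\sqrt r\sup_u\Vert A^\top u\Vert_2=\sqrt r\Vert A\Vert_{l_2\to l_2}$ and $r^{1/\alpha}\sup_u\Vert A^\top u\Vert_\infty=r^{1/\alpha}\Vert A\Vert_{l_2\to l_\infty}$, giving $\big\Vert\,\Vert A\tilde\xi\Vert_2\,\big\Vert_{L_r}\asymp_\alpha\Vert A\Vert_F+\sqrt r\Vert A\Vert_{l_2\to l_2}+r^{1/\alpha}\Vert A\Vert_{l_2\to l_\infty}$. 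I expect this to be the main obstacle: controlling the supremum sharply, rather than through a lossy $\varepsilon$-net whose $5^n$ cardinality ruins small $r$, calls for a generic-chaining/concentration argument adapted to the $\alpha$-sub-exponential, log-convex regime. This is exactly the delicate ingredient packaged by the general Kolesko--Lata\l{}a moment comparison theorem, so in practice the cleanest route is to invoke that theorem directly, as the statement is its ``Example 3'', rather than re-deriving the supremum bound by hand.

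Finally I would assemble the four scales $\sqrt r\Vert A\Vert_F$, $r\Vert A\Vert_{l_2\to l_2}$, $r^{(\alpha+2)/2\alpha}\Vert A\Vert_{l_2\to l_\infty}$ (note $(\alpha+2)/2\alpha=\tfrac12+\tfrac1\alpha$, matching $r^{1/\alpha}\cdot\sqrt r$), and $r^{2/\alpha}\Vert A\Vert_\infty$, and reconcile them with the $\ell_r$-aggregated quantities from the easy term. Here one shows that the aggregated norms $\big(\sum_i\Vert\mathrm{row}_i\Vert_2^r\big)^{1/r}$ and $\big(\sum_{i,j}|a_{ij}|^r\big)^{1/r}$ exceed their max-norm counterparts only by amounts already absorbed by the Frobenius and operator terms, via the interpolation $\Vert A\Vert_{\ell_r}\le\Vert A\Vert_\infty^{1-2/r}\Vert A\Vert_F^{2/r}$ together with a short case split on whether $r\gtrsim\log n$ (where the dimensional factor $n^{2/r}$ is $\asymp 1$ and the max-norm is recovered) or $r\lesssim\log n$ (where the operator term dominates). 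This bookkeeping, which identifies the four stated matrix norms as the dominant scales and discards the rest, is the remaining routine-but-careful part of the argument.
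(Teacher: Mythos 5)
This lemma is not proved in the paper at all: as its header indicates, it is imported verbatim as Example~3 of Kolesko--Lata{\l}a \cite{Kolesko_SPL}, so the paper's ``proof'' is the citation. Your closing move --- ``invoke that theorem directly'' --- therefore coincides with the paper's treatment, and to that extent the proposal is consistent with the source. But judged as a derivation it is incomplete, and circular at the decisive step: the entire content of the lemma beyond the routine conditioning is the sharp bound on $\bigl\Vert\,\Vert A\tilde\xi\Vert_{2}\,\bigr\Vert_{L_{r}}$, and you neither prove it nor can you cite the Kolesko--Lata{\l}a theorem for it without assuming the very result being established. Your own diagnosis is accurate (a $5^{n}$-net is too lossy for $r\ll n$; one needs the chaining argument in the log-convex regime), which is exactly why this step cannot be waved at.

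Moreover, the intermediate estimate you assert is false as a two-sided bound for general $A$. Take $A=e_{1}\mathbf{1}^{\top}$ (one full row of ones). Then $\Vert A\tilde\xi\Vert_{2}=\vert\sum_{j}\tilde\xi_{j}\vert$, and Lemma~\ref{Lem_Lp_linearsum} gives $\bigl\Vert\,\Vert A\tilde\xi\Vert_{2}\,\bigr\Vert_{L_{r}}\asymp_{\alpha}\sqrt{rn}+r^{1/\alpha}n^{1/r}$, whereas your right-hand side is $\sqrt{n}+\sqrt{rn}+r^{1/\alpha}\sqrt{n}$ (all three norms equal $\sqrt{n}$ here); at $r\asymp\log n$ these differ by a factor $(\log n)^{1/\alpha-1/2}\to\infty$. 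The correct $r^{1/\alpha}$-scale in the lower bound is $\max_{j}\Vert\mathrm{col}_{j}\Vert_{2}$, not $\Vert A\Vert_{l_{2}\to l_{\infty}}=\max_{i}\Vert\mathrm{row}_{i}\Vert_{2}$; these agree only because the lemma's $A$ is symmetric, a hypothesis your argument never invokes. Relatedly, the lemma's lower bound at the scale $r^{1/2+1/\alpha}\Vert A\Vert_{l_{2}\to l_{\infty}}$ should be extracted from your \emph{other} term, via $r^{1/\alpha}\bigl\Vert\,\Vert A\tilde\xi\Vert_{r}\,\bigr\Vert_{L_{r}}\ge r^{1/\alpha}\Vert(A\tilde\xi)_{i_{*}}\Vert_{L_{r}}\gtrsim_{\alpha}r^{1/2+1/\alpha}\max_{i}\bigl(\sum_{j}a_{ij}^{2}\bigr)^{1/2}$ with $i_{*}$ the maximizing row, not from the Euclidean-norm term. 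The parts of your reduction that are sound --- conditional application of Lemma~\ref{Lem_Lp_linearsum} plus Fubini (valid in both directions since both comparisons are two-sided and the pieces are nonnegative), and the reconciliation of the $\ell_{r}$-aggregated quantities with $\Vert A\Vert_{l_{2}\to l_{\infty}}$ and $\Vert A\Vert_{\infty}$ --- mirror manipulations the paper performs elsewhere; note that the latter needs no case split on $r$ versus $\log n$: the weighted Young trick with the $e^{2r/(r-2)}$ rescaling, used in the proof of Theorem~\ref{Theo_main2} and in Appendix~\ref{appendix_C}, gives $\bigl(\sum_{i,j}\vert a_{ij}\vert^{r}\bigr)^{1/r}\le e^{6}\Vert A\Vert_{\infty}+e^{-r}\Vert A\Vert_{F}$ directly, after which $r^{2/\alpha}e^{-r}\lesssim_{\alpha}\sqrt{r}$ absorbs the remainder into the Frobenius term.
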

By employing a similar argument as in \cite{Sambale_Progress_probability}, we can derive the following simplified result from Lemmas \ref{Lem_Lp_quadratic_1_2} and \ref{Lem_Lp_quadratic_0_1}. For ease of reference, the proof is provided in Appendix \ref{appendix_B}.
\begin{coro}\label{Cor_Section2_quadratic_forms}
	In the case $0<\alpha\le 2$, we have for $r\ge 2$,
	\begin{align}
		\Big\Vert \sum_{i,j}a_{ij}\xi_{i}\tilde{\xi}_{j}\Big\Vert_{L_{r}}\lesssim_{\alpha}r^{1/2}\Vert A\Vert_{F}+r^{2/\alpha}\Vert A\Vert_{l_{2}\to l_{2}}.\nonumber
	\end{align}
\end{coro}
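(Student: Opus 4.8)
The plan is to handle the two regimes $1\le\alpha\le 2$ and $0<\alpha\le 1$ separately, feeding them into Lemma \ref{Lem_Lp_quadratic_1_2} and Lemma \ref{Lem_Lp_quadratic_0_1} respectively, and then to bound every intermediate term in those expansions by $a+b$, where I abbreviate $a:=r^{1/2}\Vert A\Vert_{F}$ and $b:=r^{2/\alpha}\Vert A\Vert_{l_2\to l_2}$. Since $r\ge 2\ge 1$, the map $c\mapsto r^{c}$ is nondecreasing, so for a term of the shape $r^{\gamma}\Vert A\Vert_{N}$ it suffices to (i) dominate the matrix norm $\Vert A\Vert_{N}$ by a product of $\Vert A\Vert_{F}$ and $\Vert A\Vert_{l_2\to l_2}$, and (ii) check that the induced power of $r$ does not exceed the target exponent.

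The elementary norm comparisons I would use are $\Vert A\Vert_{\infty}\le\Vert A\Vert_{l_2\to l_2}$, $\Vert A\Vert_{l_2\to l_\infty}=\max_i(\sum_j a_{ij}^2)^{1/2}\le\Vert A\Vert_{l_2\to l_2}$, $\Vert A\Vert_{l_2\to l_{\alpha^*}}\le\Vert A\Vert_{l_2\to l_2}$ (because $\Vert\cdot\Vert_{\alpha^*}\le\Vert\cdot\Vert_2$ once $\alpha^*\ge 2$), and $\Vert A\Vert_{l_\alpha\to l_{\alpha^*}}\le\Vert A\Vert_{l_2\to l_2}$ (because the $\ell_\alpha$-unit ball is contained in the $\ell_2$-unit ball when $\alpha\le 2$). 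Paired with the trivial exponent checks $1\le 2/\alpha$ and $(\alpha+2)/(2\alpha)=\tfrac12+\tfrac1\alpha\le 2/\alpha$ (both equivalent to $\alpha\le 2$), these absorb every term of Lemma \ref{Lem_Lp_quadratic_0_1} and all but one term of Lemma \ref{Lem_Lp_quadratic_1_2} directly into $b$; the leading term $r^{1/2}\Vert A\Vert_F$ is exactly $a$.

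The one term requiring genuine work is $r^{1/\alpha}\Vert A\Vert_{l_{\alpha^*}(l_2)}$, which occurs in the regime $1\le\alpha\le 2$. Writing $R_i=(\sum_j a_{ij}^2)^{1/2}$ for the row $\ell_2$-norms, I would first interpolate by log-convexity of $\ell_p$-norms, $\Vert A\Vert_{l_{\alpha^*}(l_2)}=\Vert(R_i)\Vert_{\alpha^*}\le\Vert(R_i)\Vert_2^{2/\alpha^*}\Vert(R_i)\Vert_\infty^{1-2/\alpha^*}\le\Vert A\Vert_F^{2-2/\alpha}\Vert A\Vert_{l_2\to l_2}^{2/\alpha-1}$, using $1/\alpha^*=1-1/\alpha$. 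Setting $\theta:=2-2/\alpha\in[0,1]$, this exhibits the term as $a^{\theta}b^{1-\theta}$ up to a power of $r$, and the exponent bookkeeping reduces to verifying $1/\alpha\le\theta\cdot\tfrac12+(1-\theta)\cdot\tfrac2\alpha$, which rearranges to $(1-2/\alpha)^2\ge 0$; since $r\ge 1$ the surplus $r$-power only helps. A closing weighted AM--GM step $a^{\theta}b^{1-\theta}\le\theta a+(1-\theta)b\le a+b$ finishes this term.

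I expect this last term to be the only real obstacle: it is the sole place where a nontrivial norm interpolation must be combined with a matching weighted AM--GM, whereas every other term collapses under monotonicity of $r^{c}$ together with a single norm domination. Summing the resulting $O_\alpha(1)$ bounds over the finitely many terms in each regime yields the claimed two-term estimate, with the $\alpha$-dependence of the constant coming from Lemmas \ref{Lem_Lp_quadratic_1_2} and \ref{Lem_Lp_quadratic_0_1} and the bounded number of terms.
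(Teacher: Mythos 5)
Your proposal is correct, and for the only delicate term it takes a genuinely different route from the paper. Both arguments share the same skeleton: split into the regimes $1\le\alpha\le 2$ and $0<\alpha\le 1$, invoke Lemmas \ref{Lem_Lp_quadratic_1_2} and \ref{Lem_Lp_quadratic_0_1}, and absorb all the easy terms via the norm dominations and exponent checks you list (these match the paper's Appendix \ref{appendix_B} verbatim, including the chain $\Vert A\Vert_{l_{\alpha}\to l_{\alpha^{*}}}\le \Vert A\Vert_{l_{2}\to l_{\alpha^{*}}}\le \Vert A\Vert_{l_{2}\to l_{2}}$). The divergence is in handling $r^{1/\alpha}\Vert A\Vert_{l_{\alpha^{*}}(l_{2})}$: the paper, following Sambale, uses a variational argument, writing $r^{1/\alpha}\Vert A\Vert_{l_{\alpha^{*}}(l_{2})}=\sup\{\sum_{i,j}a_{ij}z_{i}y_{ij}\}$ over the constraint set $I(r)$, enlarging it to $I_1(r)$ where $\sum_i\min\{\vert z_i\vert^{\alpha},z_i^{2}\}\le r$, and splitting on $\vert z_i\vert\le 1$ versus $\vert z_i\vert>1$ to obtain $r^{1/\alpha}\Vert A\Vert_{l_{\alpha^{*}}(l_{2})}\le r^{1/2}\Vert A\Vert_{F}+r\Vert A\Vert_{l_{2}\to l_{\infty}}$; you instead use log-convexity of $\ell_{p}$-norms on the row norms $R_i$ plus weighted AM--GM. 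Your computation is sound: with $\theta=2-2/\alpha\in[0,1]$ the exponent inequality $1/\alpha\le\theta/2+(1-\theta)\cdot 2/\alpha$ is exactly $(1-2/\alpha)^{2}\ge 0$. Your route is shorter and more elementary, avoiding the set-enlargement device entirely. It is worth noting that the two are closely related: if in your interpolation you keep $\Vert(R_i)\Vert_{\infty}=\Vert A\Vert_{l_{2}\to l_{\infty}}$ instead of relaxing it to $\Vert A\Vert_{l_{2}\to l_{2}}$, the exponent bookkeeping becomes the identity $1/\alpha=\theta/2+(1-\theta)$, and AM--GM reproduces the paper's sharper intermediate bound $r^{1/2}\Vert A\Vert_{F}+r\Vert A\Vert_{l_{2}\to l_{\infty}}$ exactly; so the paper's $I(r)\subset I_1(r)$ trick is, in effect, an additive form of the same interpolation, and nothing is lost by your multiplicative version for the purposes of this corollary.
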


\subsection{Decoupling inequality}

Decoupling is a technique of replacing quadratic forms of random variables by bilinear forms. The monograph \cite{de_la_Pena_Book} systematically studies decoupling and its applications. In this subsection, we introduce a classic decoupling inequality as follows:

\begin{lemma}[Theorem 3.1.1 in \cite{de_la_Pena_Book}]\label{Lem_decoupling_inequality}
	Let $F: \mathbb{R}^{+}\to \mathbb{R}^{+}$ be a convex function and $A=(a_{ij})_{n\times n}$ be a diagonal-free matrix. If $\{\xi_{i}, i\le n\}$ is a sequence of independent random variables and $\tilde{\xi}_{i}$ is an independent copy of $\xi_{i}$, then there exists a universal constant $C$ such that
	\begin{align}\label{Eq_Section2_lem_decouple}
		\mathbb{E}F\Big(\big\vert\sum_{i,j}a_{ij}\xi_{i}\xi_{j}\big\vert   \Big)\le \mathbb{E}F\Big(C\big\vert\sum_{i,j}a_{ij}\xi_{i}\tilde{\xi}_{j}   \big\vert\Big).
	\end{align} 
\end{lemma}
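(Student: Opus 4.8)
The plan is to run the classical selector--symmetrization argument for quadratic forms, crucially exploiting that $A$ is diagonal-free. First I would introduce an auxiliary sequence $\delta_{1},\dots,\delta_{n}$ of i.i.d.\ $\mathrm{Bernoulli}(1/2)$ random variables, independent of $\{\xi_{i}\}$ and $\{\tilde\xi_{i}\}$. Since $a_{ii}=0$ and $\mathbb{E}_{\delta}[\delta_{i}(1-\delta_{j})]=1/4$ for $i\neq j$, conditioning on $(\xi_{i})$ gives the exact identity
\begin{align}
\sum_{i,j}a_{ij}\xi_{i}\xi_{j}=4\,\mathbb{E}_{\delta}\Big[\sum_{i,j}\delta_{i}(1-\delta_{j})a_{ij}\xi_{i}\xi_{j}\Big],\nonumber
\end{align}
where the $i=j$ terms vanish both because $a_{ii}=0$ and because $\delta_{i}(1-\delta_{i})=0$.

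Next I would invoke the convexity of $F$ (together with its monotonicity, which is what makes $x\mapsto F(|x|)$ amenable to Jensen) to pull $F$ through the inner average $\mathbb{E}_{\delta}$, and then apply Fubini to move $\mathbb{E}_{\delta}$ to the outside, obtaining
\begin{align}
\mathbb{E}_{\xi}\,F\Big(\big|\textstyle\sum_{i,j}a_{ij}\xi_{i}\xi_{j}\big|\Big)\le \mathbb{E}_{\delta}\,\mathbb{E}_{\xi}\,F\Big(4\big|\textstyle\sum_{i,j}\delta_{i}(1-\delta_{j})a_{ij}\xi_{i}\xi_{j}\big|\Big).\nonumber
\end{align}
For a fixed realization of $\delta$, set $I=\{i:\delta_{i}=1\}$. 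The masked form $\sum_{i\in I,\,j\in I^{c}}a_{ij}\xi_{i}\xi_{j}$ uses $(\xi_{i})_{i\in I}$ only through the first factor and $(\xi_{j})_{j\in I^{c}}$ only through the second; these two index blocks are disjoint, hence the two groups of variables are independent. Consequently, replacing $(\xi_{j})_{j\in I^{c}}$ by the independent copies $(\tilde\xi_{j})_{j\in I^{c}}$ leaves the law of the masked form unchanged, so I may pass to $\sum_{i\in I,\,j\in I^{c}}a_{ij}\xi_{i}\tilde\xi_{j}$.

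The final, and I expect hardest, step is to dominate this block-restricted bilinear form by the \emph{full} decoupled form $\sum_{i,j}a_{ij}\xi_{i}\tilde\xi_{j}$, uniformly over the partition $I$. My plan here is a conditional-Jensen (tower) argument: letting $\mathcal{G}_{I}=\sigma\big((\xi_{i})_{i\in I},(\tilde\xi_{j})_{j\in I^{c}}\big)$ and using that the $\xi$- and $\tilde\xi$-families are independent (so the two factors are conditionally independent given $\mathcal{G}_{I}$), one computes $\mathbb{E}[\xi_{i}\mid\mathcal{G}_{I}]=\xi_{i}\mathbf{1}_{i\in I}$ and $\mathbb{E}[\tilde\xi_{j}\mid\mathcal{G}_{I}]=\tilde\xi_{j}\mathbf{1}_{j\in I^{c}}$ in the centered case, whence $\mathbb{E}\big[\sum_{i,j}a_{ij}\xi_{i}\tilde\xi_{j}\mid\mathcal{G}_{I}\big]=\sum_{i\in I,\,j\in I^{c}}a_{ij}\xi_{i}\tilde\xi_{j}$; conditional Jensen then gives $\mathbb{E}\,F(4|\mathrm{block}|)\le\mathbb{E}\,F(4|\mathrm{full}|)$ for every $I$, so no maximization over partitions is even needed and the universal constant is $C=4$. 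The obstacle is exactly this completion: one must verify the conditional-expectation identity, confirm the monotonicity/convexity needed to apply Jensen to $F\circ|\cdot|$, and handle the general non-centered setting (in which the mean-carrying cross terms no longer drop out and which the cited theorem covers in full generality). Assembling the three displays then yields the claim with a universal constant $C$.
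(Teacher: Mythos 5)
The paper does not actually prove this lemma: it is imported verbatim as Theorem~3.1.1 of de~la~Pe\~na--Gin\'e, so there is no internal proof to match against. Your route is the standard selector argument (the one in Vershynin's textbook proof of decoupling): the Bernoulli identity $\mathbb{E}_{\delta}[\delta_i(1-\delta_j)]=1/4$ for $i\neq j$ (where diagonal-freeness kills the $i=j$ terms), Jensen through $\mathbb{E}_\delta$, replacement of $(\xi_j)_{j\in I^c}$ by independent copies across the disjoint blocks, and the conditional-Jensen completion via $\mathbb{E}\big[\sum_{i,j}a_{ij}\xi_i\tilde\xi_j\mid\mathcal{G}_I\big]=\sum_{i\in I,\,j\in I^c}a_{ij}\xi_i\tilde\xi_j$. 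All of these steps are correct \emph{for centered} $\xi_i$, and they yield $C=4$. Since every invocation of the lemma in this paper is applied to centered variables ($\xi_i=\delta_i\zeta_i$ with $\mathbb{E}\zeta_i=0$, and likewise in the proof of the RIP bound), your centered version would in fact suffice for all of the paper's purposes. Your parenthetical caveat about monotonicity is also right: both Jensen applications need $x\mapsto F(|x|)$ convex, which requires $F$ nondecreasing in addition to convex (e.g.\ $F(t)=(t-1)^2$ fails); the paper's statement omits "nondecreasing," but its only uses are $F(x)=x^r$, $r\ge1$, which comply.

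The genuine gap is the one you flag at the end, and it is worth being precise about why your proposed fix cannot work as stated. The lemma, like the cited Theorem~3.1.1, assumes only independence, not centering, and the conditional-expectation identity $\mathbb{E}[\xi_i\mid\mathcal{G}_I]=\xi_i\mathbf{1}_{i\in I}$ is false when $\mathbb{E}\xi_i\neq 0$: the conditional expectation of the full decoupled form then picks up mean-carrying cross terms $\sum_{i\in I^c}\mathbb{E}\xi_i\sum_{j\in I^c}a_{ij}\tilde\xi_j$ etc., so it is no longer the block form, and indeed the block-by-full domination $\mathbb{E}F(4|\mathrm{block}|)\le\mathbb{E}F(4|\mathrm{full}|)$ can fail outright without centering: take $\xi_i\equiv1$ and $a_{12}=1=-a_{21}$, so the full decoupled form is $0$ while the block form for $I=\{1\}$ equals $1$. (The theorem itself survives in this example, since the left side also vanishes --- only your completion step breaks.) The general non-centered case in de~la~Pe\~na--Gin\'e is handled by a different device: rather than taking expectations of the variables, one conditions on the unordered pairs $\{\xi_i,\tilde\xi_i\}$ and randomizes which member of each pair plays which role (a random-transposition argument), which avoids any appeal to means and produces a larger universal constant than $4$. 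So your proposal is a correct and complete proof of the centered special case --- adequate for this paper --- but not of the lemma in the stated generality, and the missing case requires a genuinely different completion mechanism, not merely a technical patch of the conditional-Jensen step.
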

\begin{remark}
	If, moreover, $A=(a_{ij})$ is a symmetric diagnal-free matrix, then \eqref{Eq_Section2_lem_decouple} can be reversed, that is,
\begin{align}
	 \mathbb{E}F\Big(\frac{1}{C}\big\vert\sum_{i,j}a_{ij}\xi_{i}\tilde{\xi}_{j}   \big\vert\Big)\le \mathbb{E}F\Big(\big\vert\sum_{i,j}a_{ij}\xi_{i}\xi_{j}\big\vert   \Big).\nonumber
\end{align} 

\end{remark}

\subsection{Contraction principle}

In this subsection, we present a well-known contraction principle in Banach space. This principle enables the extension of results from Weibull random variables to $\alpha$-sub-exponential random variables.

\begin{lemma}[Lemma 4.6 in \cite{Ledoux_Book}]\label{Lem_contraction_principle}
	Let $F: \mathbb{R}^{+}\to \mathbb{R}^{+}$ be a convex non-decreasing function. Assume that $\{\eta_{i}, i\le n\}$ and $\{\xi_{i}, i\le n\}$ are two  sequences of independent symmetric random variables such that for some constant $K\ge 1$
	\begin{align}\label{Eq_Section2_contraction}
		\mathbb{P}\{\vert \eta_{i}\vert>t  \}\le K\mathbb{P}\{\vert\xi_{i}\vert >t \},\quad i\le n,\quad t>0.
	\end{align}
	Then, for any sequence $\{a_{i}, i\le n\}$ in a Banach space $(\mathcal{A}, \Vert\cdot\Vert)$,
	\begin{align}
		\mathbb{E}F\Big( \big\Vert \sum_{i=1}^{n}a_{i}\eta_{i}\big\Vert    \Big)\le \mathbb{E}F\Big(K \big\Vert \sum_{i=1}^{n}a_{i}\xi_{i}\big\Vert   \Big).\nonumber
	\end{align}
	
\end{lemma}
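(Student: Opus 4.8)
The plan is to strip off the signs by symmetrization, encode the remaining magnitudes through a single convex, coordinatewise-monotone auxiliary function, and then exchange the magnitudes of $\eta_i$ for those of $\xi_i$ one coordinate at a time, using a one-dimensional comparison tailored to absorb the factor $K$. Throughout I would assume enough integrability that the right-hand side is finite (otherwise there is nothing to prove) and reduce to finitely many indices, and I would realize all the magnitudes $|\eta_i|,|\xi_i|$ on a common product space as mutually independent random variables, since the claim depends only on their marginal laws.

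First, using that each $\eta_i$ is symmetric and the $\eta_i$ are independent, I would introduce an independent Rademacher sequence $\{\varepsilon_i\}$ and write $(\eta_i)_i\stackrel{d}{=}(\varepsilon_i|\eta_i|)_i$, and likewise $(\xi_i)_i\stackrel{d}{=}(\varepsilon_i|\xi_i|)_i$ with the same signs. This reduces the two sides of the claim to $\mathbb{E}F(\|\sum_i a_i\varepsilon_i|\eta_i|\|)$ and $\mathbb{E}F(K\|\sum_i a_i\varepsilon_i|\xi_i|\|)$. I then define, for $t=(t_1,\dots,t_n)\in\mathbb{R}_+^n$, the deterministic function
\[
\Phi(t)=\mathbb{E}_\varepsilon F\Big(\Big\|\sum_i a_i\varepsilon_i t_i\Big\|\Big),
\]
so the left side equals $\mathbb{E}\,\Phi(|\eta_1|,\dots,|\eta_n|)$ and, since $K$ pulls out of the norm, the right side equals $\mathbb{E}\,\Phi(K|\xi_1|,\dots,K|\xi_n|)$.

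The key structural facts I need are that, with all other coordinates frozen, $s\mapsto\Phi(\dots,s,\dots)$ is convex and non-decreasing on $\mathbb{R}_+$. Convexity is immediate: $s\mapsto\|c+s\varepsilon_i a_i\|$ is convex, $F$ is convex non-decreasing, and expectation preserves convexity. Monotonicity comes from symmetry: after averaging over $\varepsilon_i$ alone, the relevant function is $\tfrac12 F(\|c+sa_i\|)+\tfrac12 F(\|c-sa_i\|)$, which is even and convex in $s$, hence non-decreasing on $[0,\infty)$, and averaging over the remaining signs preserves this. With this in hand I would use independence of the magnitudes to run a telescoping (hybrid) argument, replacing $|\eta_i|$ by $K|\xi_i|$ one index at a time, so the whole inequality reduces to the scalar statement that for every convex non-decreasing $g:\mathbb{R}_+\to\mathbb{R}_+$ one has $\mathbb{E}\,g(|\eta_i|)\le \mathbb{E}\,g(K|\xi_i|)$.

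The hard part will be this last scalar comparison, precisely because $K$ enters twice — once as the tail-domination constant and once as the multiplicative scaling — and only their interplay through convexity yields the clean constant. I would prove it by the layer-cake identity, writing $g(x)=g(0)+\int_0^x g'(u)\,du$ with $g'$ non-decreasing and non-negative, and then by Tonelli
\[
\mathbb{E}\,g(|\eta_i|)=g(0)+\int_0^\infty g'(u)\,\mathbb{P}\{|\eta_i|>u\}\,du,\qquad \mathbb{E}\,g(K|\xi_i|)=g(0)+\int_0^\infty K g'(Ku)\,\mathbb{P}\{|\xi_i|>u\}\,du.
\]
Applying $\mathbb{P}\{|\eta_i|>u\}\le K\,\mathbb{P}\{|\xi_i|>u\}$ and then $g'(u)\le g'(Ku)$ (from $K\ge1$ and monotonicity of $g'$) shows the first integral is dominated termwise by the second, giving $\mathbb{E}g(|\eta_i|)\le\mathbb{E}g(K|\xi_i|)$. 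Feeding this into each step of the telescoping chain yields $\mathbb{E}\,\Phi(|\eta_1|,\dots,|\eta_n|)\le \mathbb{E}\,\Phi(K|\xi_1|,\dots,K|\xi_n|)$, and undoing the symmetrization identity turns the right-hand side back into $\mathbb{E}F(K\|\sum_i a_i\xi_i\|)$, completing the proof.
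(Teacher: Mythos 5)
Note first that the paper does not prove this lemma: it is imported verbatim as Lemma~4.6 of Ledoux--Talagrand \cite{Ledoux_Book}, so there is no in-paper proof to compare against. Your blind proof is correct and complete, and it essentially reconstructs the textbook argument. The three pillars all check out: (i) symmetrization is legitimate because the two sides depend only on the within-sequence joint laws, so realizing $(\varepsilon_i,|\eta_i|,|\xi_i|)$ as fully independent on one product space loses nothing, and $K$ pulls inside the norm to give $\Phi(K|\xi_1|,\dots,K|\xi_n|)$; (ii) the coordinatewise structure of $\Phi$ is exactly right --- convexity in each $t_i$ is preserved under composition with the nondecreasing convex $F$ and under averaging, while monotonicity on $[0,\infty)$ genuinely needs the symmetry, via your observation that $s\mapsto \tfrac12 F(\|c+sa_i\|)+\tfrac12 F(\|c-sa_i\|)$ is even and convex, hence nondecreasing for $s\ge 0$; and (iii) the scalar comparison is the crux and you prove it correctly: with $g$ convex nondecreasing, writing $\mathbb{E}g(|\eta_i|)=g(0)+\int_0^\infty g'(u)\,\mathbb{P}\{|\eta_i|>u\}\,du$ and $\mathbb{E}g(K|\xi_i|)=g(0)+\int_0^\infty K g'(Ku)\,\mathbb{P}\{|\xi_i|>u\}\,du$, the pointwise chain $g'(u)\,\mathbb{P}\{|\eta_i|>u\}\le K g'(u)\,\mathbb{P}\{|\xi_i|>u\}\le K g'(Ku)\,\mathbb{P}\{|\xi_i|>u\}$ uses the tail hypothesis once and $K\ge 1$ with monotonicity of $g'$ once, which is precisely how the single constant $K$ does double duty in the original proof. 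The telescoping step is justified by independence (conditioning on the frozen coordinates reduces each step to the scalar lemma), and your handling of integrability (the claim is vacuous if the right side is infinite; Tonelli applies since all integrands are nonnegative) is adequate. In short: correct proof, same route as the cited source.
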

\begin{remark}\label{Rem_contraction_principle}
	If $\{\eta_{i}, i\le n\}$ and $\{\xi_{i}, i\le n\}$ are two  sequences of independent centered random variables satisfying \eqref{Eq_Section2_contraction}, the result in Lemma \ref{Lem_contraction_principle} is still valid by a symmetrization argument. Indeed, let $\tilde{\eta}_{i}$ be  independent copy of $\eta_{i}$, $1\le i\le n$. Then, Jensen inequality yields
	\begin{align}
		\mathbb{E}F\Big( \big\Vert \sum_{i=1}^{n}a_{i}\eta_{i}\big\Vert    \Big)&=\mathbb{E}F\Big( \big\Vert \sum_{i=1}^{n}a_{i}(\eta_{i}-\mathbb{E}_{\tilde{\eta}}\tilde{\eta}_{i})\big\Vert    \Big)\nonumber\\
		&\le \mathbb{E}F\Big( \big\Vert \sum_{i=1}^{n}a_{i}(\eta_{i}-\tilde{\eta}_{i})\big\Vert    \Big)\le \mathbb{E}F\Big( 2\big\Vert \sum_{i=1}^{n}a_{i}\varepsilon_{i}\eta_{i}\big\Vert    \Big),\nonumber
	\end{align}
where $\{\varepsilon_{i}, i\le n\}$ are independent Rademacher variables. On the other hand,  by the convexity of $F$
\begin{align}
	\mathbb{E}F\Big( 2\big\Vert \sum_{i=1}^{n}a_{i}\varepsilon_{i}\eta_{i}\big\Vert    \Big)\le \mathbb{E}F\Big( 2\big\Vert \sum_{i=1}^{n}a_{i}\varepsilon_{i}(\eta_{i}-\tilde{\eta}_{i})\big\Vert    \Big)\le \mathbb{E}F\Big( 4\big\Vert \sum_{i=1}^{n}a_{i}\eta_{i}\big\Vert    \Big).\nonumber
\end{align}
Hence, we have 
\begin{align}
	\mathbb{E}F\Big( \big\Vert \sum_{i=1}^{n}a_{i}\eta_{i}\big\Vert    \Big)\le \mathbb{E}F\Big( 2\big\Vert \sum_{i=1}^{n}a_{i}\varepsilon_{i}\eta_{i}\big\Vert    \Big)\le \mathbb{E}F\Big( 4\big\Vert \sum_{i=1}^{n}a_{i}\eta_{i}\big\Vert    \Big).\nonumber 
\end{align}
\end{remark}

\subsection{Concentration inequality}
In this subsection, we shall introduce Talagrand's concentration inequality for convex Lipschitz functions.
\begin{lemma}[Corollary 4.10 in \cite{Ledoux_Book_concentration_measure}]\label{Lem_concentration_inequality}
	Let $\xi_{1}, \cdots, \xi_{n}$ be independent random variables such that $\vert \xi_{i}\vert\le K$ for all $1\le i\le n$. Assume that $f: \mathbb{R}^{n}\to \mathbb{R}$ is a convex and $1$-Lipschitz function. Then for $t>0$
	\begin{align}
		\mathbb{P}\big\{\big\vert f(\xi_{1},\cdots, \xi_{n})-\mathbb{E}f(\xi_{1}, \cdots, \xi_{n}) \big\vert>Kt  \big\}\le 4e^{-t^{2}/4}.\nonumber
	\end{align}

\end{lemma}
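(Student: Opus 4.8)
The statement is the classical Talagrand concentration inequality for convex Lipschitz functions, and the plan is to derive it from Talagrand's convex distance inequality. First I would normalize: replacing each $\xi_i$ by $(\xi_i+K)/(2K)$ and $f$ by the corresponding affine reparametrization, it suffices to prove the bound for independent coordinates supported in $[0,1]$ and a convex, $1$-Lipschitz (Euclidean) function, the general case following by rescaling and absorbing the factors $K$ into the constants. Throughout write $X=(\xi_1,\dots,\xi_n)$.

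The central tool is the convex distance. For $x\in[0,1]^n$ and a Borel set $A\subseteq[0,1]^n$, define
\[
d_T(x,A)=\sup_{\alpha\ge 0,\ \|\alpha\|_2\le 1}\ \inf_{y\in A}\ \sum_{i=1}^{n}\alpha_i\,\mathbf{1}\{x_i\ne y_i\}.
\]
The key input is Talagrand's inequality $\mathbb{E}\exp\big(\tfrac14 d_T(X,A)^2\big)\le 1/\mathbb{P}\{X\in A\}$, valid for every product measure; by Markov it gives $\mathbb{P}\{d_T(X,A)\ge t\}\le e^{-t^2/4}/\mathbb{P}\{X\in A\}$. I would then convert this geometric concentration into a statement about $f$ using convexity. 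By a minimax (Sion) argument, $d_T(x,A)=\inf_{\mu}\big\|(\mathbb{P}_{Y\sim\mu}\{Y_i\ne x_i\})_{i=1}^{n}\big\|_2$, the infimum being over probability measures $\mu$ on $A$. Since the coordinates lie in $[0,1]$ one has $|x_i-\mathbb{E}_\mu Y_i|\le\mathbb{P}_\mu\{Y_i\ne x_i\}$, so the barycenter $\bar y=\mathbb{E}_\mu Y\in\mathrm{conv}(A)$ obeys $\|x-\bar y\|_2\le d_T(x,A)$, whence $\mathrm{dist}(x,\mathrm{conv}(A))\le d_T(x,A)$.

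Now take $M$ a median of $f(X)$ and $A=\{f\le M\}$. Convexity gives $f(\bar y)\le \mathbb{E}_\mu f(Y)\le M$ by Jensen, and the $1$-Lipschitz property yields $f(x)\le f(\bar y)+\|x-\bar y\|_2\le M+d_T(x,A)$. Hence $\{f(X)\ge M+t\}\subseteq\{d_T(X,A)\ge t\}$, and since $\mathbb{P}\{X\in A\}\ge\tfrac12$ we obtain $\mathbb{P}\{f(X)\ge M+t\}\le 2e^{-t^2/4}$; applying the same argument to the superlevel set $\{f\ge M\}$ controls the lower tail, so the two-sided deviation from the median is at most $4e^{-t^2/4}$. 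A routine median-to-mean comparison—the two differ by a universal constant because of the sub-Gaussian tails just established—then yields the bound centered at $\mathbb{E}f$, and undoing the normalization reinstates the scale $K$.

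The genuinely hard step is Talagrand's convex distance inequality itself. I would prove it by induction on the dimension $n$: the case $n=1$ is elementary, while the inductive step conditions on the last coordinate, relates $A$ to its section and its projection onto the first $n-1$ coordinates, and glues the two inductive estimates together via an optimization over the weight $\alpha$ combined with an elementary one-variable convexity inequality that forces the recursion to close with the sharp constant $1/4$. This recursion is the technical heart of the argument; the minimax identity, the Jensen/Lipschitz bridge, and the median-to-mean passage are all routine once it is in hand.
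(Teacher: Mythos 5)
The paper offers no internal proof of this lemma: it is imported verbatim as Corollary~4.10 of Ledoux's monograph and used as a black box. Your proposal reconstructs exactly the derivation behind the cited source --- Talagrand's convex distance inequality proved by induction on dimension, the minimax identity $d_T(x,A)=\inf_{\mu}\|(\mathbb{P}_{\mu}\{Y_i\ne x_i\})_i\|_2$, the Jensen/Lipschitz bridge $\mathrm{dist}(x,\mathrm{conv}(A))\le d_T(x,A)$, and the sublevel-set argument --- so the architecture is the canonical one and is sound.

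Two points need repair. First, the lower tail: you cannot literally ``apply the same argument to the superlevel set $\{f\ge M\}$,'' because that set is not convex, and the bridge $f(x)\le f(\bar y)+\|x-\bar y\|_2$ with $\bar y\in\mathrm{conv}(A)$ only works when $f$ is controlled on the convex hull, i.e.\ when $A$ is a \emph{sublevel} set of the convex function. The standard fix: take $A=\{f\le M-t\}$; for any $x$ with $f(x)\ge M$ and any $\bar y\in\mathrm{conv}(A)$, convexity of $f$ gives $f(\bar y)\le M-t$, so the Lipschitz property yields $t\le \|x-\bar y\|_2\le d_T(x,A)$; since $\mathbb{P}\{f(X)\ge M\}\ge 1/2$, the convex distance inequality forces $\mathbb{P}(A)\le 2e^{-t^2/4}$. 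Second, constants: with $|\xi_i|\le K$ the coordinate diameter is $2K$, so the bridge reads $|x_i-\bar y_i|\le 2K\,\mathbb{P}_\mu\{Y_i\ne x_i\}$, and your normalization produces $\mathbb{P}\{|f-m_f|>Kt\}\le 4e^{-t^2/16}$ around the \emph{median}; the median-to-mean passage you invoke degrades this further to $Ce^{-ct^2}$ with unspecified universal constants. So your route proves the lemma up to universal constants rather than with the literal $4e^{-t^2/4}$ centered at $\mathbb{E}f$ as stated (a looseness already present in the paper's own restatement of Ledoux's corollary, which is formulated for the median). This is harmless for every use the paper makes of the lemma, since those only require $\lesssim$-type bounds, but you should say so explicitly rather than claim the stated constants.
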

\subsection{The sparse Bernstein inequality}
In this subsection, we introduce the following sparse Bernstein inequality
in $\alpha$-sub-exponential random variables, $0<\alpha\le 1$. With this property, one can estimate the diagonal sum of the quadratic forms. 
\begin{lemma}\label{Prop_Section3_Bernstein_sparse}
Assume that $\{\delta_{i}, 1\le i\le n \}$ and $\{\zeta_{i}, 1\le i\le n \}$ are two independent sequences of random variables, where $\delta_{i}\sim\textnormal{Bernoulli}(p_{i})$ are independent and $\zeta_{i}$ are independent centered $\alpha$-sub-exponential random variables.
       Let $a=(a_{1},\cdots, a_{n})$ be a  fixed vector. Then for $0<\alpha\le 1$ and $t\ge 0$
       \begin{align}
       	\mathbb{P}\Big\{\big\vert \sum_{i=1}^{n}a_{i}\delta_{i}\zeta_{i}\big\vert\ge t   \Big\}\le 2\exp\Big(-c(\alpha)\min\Big\{\frac{t^{2}}{L(\alpha)^{2}(\sum_{i}a_{i}^{2}p_{i})}, \Big(\frac{t}{L(\alpha)\Vert a\Vert_{\infty}}\Big)^{\alpha}  \Big\}   \Big),\nonumber
       \end{align}
   where $L(\alpha)=\max_{i}\Vert \zeta_{i}\Vert_{\Psi_{\alpha}}$ and $c(\alpha)$ is a positive constant depending only on $\alpha$.
\end{lemma}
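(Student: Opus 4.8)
The plan is to bound the $L_r$ norm of the linear form and then pass to a tail bound via Lemma \ref{Lem_Moments_to_tailbound}. Concretely, the target moment estimate is, for $r\ge 2$,
\[
\Big\Vert \sum_{i} a_i\delta_i\zeta_i\Big\Vert_{L_r} \lesssim_\alpha L(\alpha)\sqrt{r}\Big(\sum_i a_i^2 p_i\Big)^{1/2} + L(\alpha)\,r^{1/\alpha}\Vert a\Vert_\infty,
\]
after which Lemma \ref{Lem_Moments_to_tailbound} with $m=2$, exponents $\beta_1=1/2$, $\beta_2=1/\alpha$, and $r_0=2$ produces exactly the two-regime bound $\exp(-c(\alpha)\min\{t^2/(L(\alpha)^2\sum_i a_i^2p_i),\,(t/(L(\alpha)\Vert a\Vert_\infty))^\alpha\})$; the resulting prefactor $e^{2}$ is absorbed into the factor $2$ by the usual device of using the trivial bound $\mathbb{P}\le 1$ on the range where the exponent is $O(1)$.

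First I would pass from the $\alpha$-sub-exponential coordinates to symmetric Weibull coordinates. Since $\mathbb{E}[\delta_i\zeta_i]=p_i\mathbb{E}\zeta_i=0$, the summands are centered, so Remark \ref{Rem_contraction_principle} is available. Introducing independent $\delta_i^{W}\sim\mathrm{Bernoulli}(p_i)$ and $W_i\sim\mathcal{W}_s(\alpha)$, the definition $\Vert\zeta_i\Vert_{\Psi_\alpha}\le L(\alpha)$ combined with Markov's inequality gives $\mathbb{P}(|\zeta_i|>t)\le 2e^{-(t/L(\alpha))^\alpha}$, so that
\[
\mathbb{P}(|\delta_i\zeta_i|>t)=p_i\,\mathbb{P}(|\zeta_i|>t)\le 2p_i\,e^{-(t/L(\alpha))^\alpha}=2\,\mathbb{P}\big(|L(\alpha)\delta_i^{W}W_i|>t\big).
\]
Applying the contraction principle (Lemma \ref{Lem_contraction_principle} through Remark \ref{Rem_contraction_principle}) with $F(x)=x^r$ then reduces the problem, up to a universal constant, to bounding $L(\alpha)\,\Vert\sum_i a_i\delta_i^{W}W_i\Vert_{L_r}$.

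Next I would invoke Hitczenko's moment formula (Lemma \ref{Lem_Lp_linearsum}) for the summands $\chi_i:=a_i\delta_i^{W}W_i$. These are symmetric, and their tails are log-convex, since $\log\mathbb{P}(|\chi_i|>t)=\log p_i-(t/|a_i|)^\alpha$ is convex in $t$ for $0<\alpha\le 1$. Using $\mathbb{E}|W_i|^r=\Gamma(1+r/\alpha)$, hence $(\mathbb{E}|W_i|^r)^{1/r}\asymp_\alpha r^{1/\alpha}$, together with $\mathbb{E}W_i^2=\Gamma(1+2/\alpha)\asymp_\alpha 1$, Lemma \ref{Lem_Lp_linearsum} yields
\[
\Big\Vert\sum_i\chi_i\Big\Vert_{L_r}\asymp_\alpha r^{1/\alpha}\Big(\sum_i|a_i|^r p_i\Big)^{1/r}+\sqrt{r}\Big(\sum_i a_i^2 p_i\Big)^{1/2}.
\]

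The remaining and most delicate point is to extract the correct two-regime structure from the first term, i.e.\ to show $r^{1/\alpha}(\sum_i|a_i|^r p_i)^{1/r}\lesssim_\alpha r^{1/\alpha}\Vert a\Vert_\infty+\sqrt{r}(\sum_i a_i^2 p_i)^{1/2}$. A naive estimate, such as $\sum_i|a_i|^r p_i\le\Vert a\Vert_\infty^{r-2}\sum_i a_i^2p_i$ followed by AM--GM, is too lossy: it generates a spurious $r^{1/\alpha}(\sum_i a_i^2p_i)^{1/2}$ term that overwhelms the sub-Gaussian $\sqrt r$ scale and destroys the Bernstein form. The fix is to retain the $r$-dependent weight. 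Writing $x=(\sum_i a_i^2 p_i)^{1/2}/\Vert a\Vert_\infty$ and using the bound above (valid since $r-2\ge 0$), the claim reduces to the elementary inequality $x^{2/r}\le c(\alpha)\,(1+r^{1/2-1/\alpha}x)$ for all $x\ge 0$ and $r\ge 2$. This follows by splitting on whether $x\le r^{1/\alpha-1/2}$ or $x>r^{1/\alpha-1/2}$; in both cases the estimate collapses to $r^{(2/\alpha-1)/r}\le e^{(2/\alpha-1)/e}=:c(\alpha)$, using $(\log r)/r\le 1/e$. Combining the four displays gives the target moment bound, and Lemma \ref{Lem_Moments_to_tailbound} finishes the proof. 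I expect this interpolation to be the main obstacle, since one must carefully separate the sub-Gaussian ($\sqrt r$) and sub-Weibull ($r^{1/\alpha}$) scales rather than merging them; the rest is a direct application of the tools assembled in Section 3.
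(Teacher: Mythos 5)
Your proposal is correct and follows essentially the same route as the paper's Appendix C proof: tail domination plus the contraction principle (Lemma \ref{Lem_contraction_principle} via Remark \ref{Rem_contraction_principle}) to pass to sparse symmetric Weibull variables, Hitczenko's moment formula (Lemma \ref{Lem_Lp_linearsum}) justified by log-convexity of the tails of $\delta_i\eta_i$, an interpolation replacing $r^{1/\alpha}\big(\sum_i p_i|a_i|^r\big)^{1/r}$ by $r^{1/\alpha}\Vert a\Vert_\infty$ and $\sqrt{r}\big(\sum_i p_i a_i^2\big)^{1/2}$ terms, and finally Lemma \ref{Lem_Moments_to_tailbound}. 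The only immaterial differences are that you perform the contraction step first rather than last, and you settle the interpolation with your two-case elementary inequality $x^{2/r}\le c(\alpha)\big(1+r^{1/2-1/\alpha}x\big)$ (which checks out), whereas the paper reaches the same conclusion via Young's inequality with an inserted $e^{-2r}$ weight, yielding $\big(\sum_i p_i|a_i|^r\big)^{1/r}\le e^{6}\Vert a\Vert_\infty+e^{-r}\big(\sum_i p_i a_i^2\big)^{1/2}$ for $r\ge 3$.
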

\begin{remark}
	(i). This property, though likely known in the literature, was explicitly stated and proved using a combinatorial approach in He et al. (see Theorem 3 in \cite{He_arXiv_HW}). In Appendix \ref{appendix_C} we present an alternative, more concise proof. 
	
	(ii). For $0<\alpha\le 1$ and $t\ge 0$, it holds (see Corollary 1.4 in \cite{Gotze_EJP})
	\begin{align}
		\mathbb{P}\Big\{\big\vert \sum_{i=1}^{n}a_{i}\zeta_{i}\big\vert\ge t   \Big\}\le 2\exp\Big(-c(\alpha)\min\Big\{\frac{t^{2}}{L(\alpha)^{2}\Vert a\Vert_{2}^{2}}, \Big(\frac{t}{L(\alpha)\Vert a\Vert_{\infty}}\Big)^{\alpha}  \Big\}   \Big).\nonumber
	\end{align}
Hence, Lemma \ref{Prop_Section3_Bernstein_sparse} recovers this classical result when $p_{1}=\cdots=p_{n}=1$.
\end{remark}

\section{Proofs of main results}
\subsection{The sparse Hanson-Wright inequality for $0<\alpha\le 2$}

We begin by proving Theorem \ref{Theo_main1} for the special case where matrix $A$ has zero diagonal elements. 
\begin{prop}\label{Prop_Section3_diagonal_free}
	Assume that $\{\delta_{i}, 1\le i\le n \}$ and $\{\zeta_{i}, 1\le i\le n \}$ are two independent sequences of random variables, where $\delta_{i}\sim\textnormal{Bernoulli}(p_{i})$ are independent and $\zeta_{i}$ are independent centered $\alpha$-sub-exponential random variables.
	Consider a random vector $\xi=(\xi_{1}, \cdots, \xi_{n})$  with $\xi_{i}=\delta_{i}\cdot\zeta_{i}$. Let $A=(a_{ij})_{n\times n}$ be a symmetric diagonal-free matrix. Then for $0<\alpha\le 2$ and $t\ge 0$, 
	\begin{align}
	\mathbb{P}\{\vert \xi^\top A\xi\vert\ge t  \}
		\le 2\exp\Big(-c(\alpha)\min\Big\{ \frac{t^{2}}{L(\alpha)^{4}\sum_{i, j}a_{ij}^{2}p_{i}p_{j}}, \big(\frac{t}{L(\alpha)^{2}\Vert A\Vert_{l_{2}\to l_{2}}}\big)^{\alpha/2}  \Big\}      \Big),\nonumber
	\end{align}
	where $L(\alpha)=\max_{i}\Vert \zeta_{i}\Vert_{\Psi_{\alpha}}$ and $c(\alpha)$ is a positive constant depending only on $\alpha$.
\end{prop}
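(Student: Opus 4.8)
The plan is to bound the moments $\Vert \xi^\top A\xi\Vert_{L_r}$ for every $r\ge 2$ and then convert to a tail bound via Lemma \ref{Lem_Moments_to_tailbound}. Since $A$ is diagonal-free, $\xi^\top A\xi=\sum_{i\neq j}a_{ij}\xi_i\xi_j$, and the target is the moment estimate $\Vert\xi^\top A\xi\Vert_{L_r}\lesssim_\alpha L(\alpha)^2\big(\sqrt r\,(\sum_{i,j}a_{ij}^2p_ip_j)^{1/2}+r^{2/\alpha}\Vert A\Vert_{l_2\to l_2}\big)$. Feeding this into Lemma \ref{Lem_Moments_to_tailbound} with exponents $\beta_1=1/2$ and $\beta_2=2/\alpha$ produces exactly the claimed $\min\{t^2/(L(\alpha)^4\sum_{i,j}a_{ij}^2p_ip_j),(t/(L(\alpha)^2\Vert A\Vert_{l_2\to l_2}))^{\alpha/2}\}$.

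First I would decouple: applying Lemma \ref{Lem_decoupling_inequality} with $F(x)=x^r$ replaces $\sum_{i,j}a_{ij}\xi_i\xi_j$ by the decoupled bilinear form $\sum_{i,j}a_{ij}\xi_i\tilde\xi_j$, with $\tilde\xi_i=\tilde\delta_i\tilde\zeta_i$ an independent copy, at the cost of a universal constant. Conditioning on the Bernoulli masks $\delta,\tilde\delta$, the inner sum is $\sum_{i,j}a_{ij}\delta_i\tilde\delta_j\zeta_i\tilde\zeta_j$, a bilinear form in the centered $\alpha$-sub-exponential variables $\zeta_i,\tilde\zeta_j$. I would then apply the contraction principle (Lemma \ref{Lem_contraction_principle} with Remark \ref{Rem_contraction_principle}) twice, once in the $\zeta$-variables and once in the $\tilde\zeta$-variables, to replace them by i.i.d.\ symmetric Weibull variables $\mathcal{W}_{s}(\alpha)$; each replacement costs a factor $\lesssim L(\alpha)$, since $\Vert\zeta_i\Vert_{\Psi_\alpha}\le L(\alpha)$ furnishes the tail domination \eqref{Eq_Section2_contraction} against $L(\alpha)\,\mathcal{W}_{s}(\alpha)$. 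Writing $B_\delta=\mathrm{Diag}(\delta)\,A\,\mathrm{Diag}(\tilde\delta)$ and invoking the decoupled bilinear bound of Corollary \ref{Cor_Section2_quadratic_forms} gives, conditionally on $\delta,\tilde\delta$, a bound $\lesssim_\alpha L(\alpha)^2\big(\sqrt r\,\Vert B_\delta\Vert_F+r^{2/\alpha}\Vert B_\delta\Vert_{l_2\to l_2}\big)$. (As $B_\delta$ is in general neither symmetric nor diagonal-free, I would use that this decoupled estimate holds for an arbitrary matrix, via the standard symmetric dilation that embeds a bilinear form into a quadratic form while preserving $\Vert\cdot\Vert_F$ up to $\sqrt2$ and $\Vert\cdot\Vert_{l_2\to l_2}$ exactly.)

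It remains to take $L_r$-norms over the masks. Here $\Vert B_\delta\Vert_{l_2\to l_2}\le\Vert A\Vert_{l_2\to l_2}$ deterministically, since $\mathrm{Diag}(\delta),\mathrm{Diag}(\tilde\delta)$ are contractions. The hard part is the Frobenius term: the naive bound $\Vert B_\delta\Vert_F\le\Vert A\Vert_F$ would destroy the sparsity gain, so I must show $\big\Vert\,\Vert B_\delta\Vert_F\,\big\Vert_{L_r}\lesssim(\sum_{i,j}a_{ij}^2p_ip_j)^{1/2}+\sqrt r\,\Vert A\Vert_{l_2\to l_2}$, i.e.\ with the correct $p_ip_j$-weighted scale. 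The clean route is Talagrand's convex concentration inequality (Lemma \ref{Lem_concentration_inequality}): the extension $(\delta,\tilde\delta)\mapsto(\sum_{i,j}a_{ij}^2\delta_i^2\tilde\delta_j^2)^{1/2}$ is a weighted $\ell_2$-norm in each block, hence convex and Lipschitz in $\delta$ (and in $\tilde\delta$) with constant at most $(\max_i\sum_j a_{ij}^2)^{1/2}\le\Vert A\Vert_{l_2\to l_2}$. Applying Lemma \ref{Lem_concentration_inequality} iteratively — first in $\delta$ with $\tilde\delta$ frozen, then to $\mathbb{E}_\delta\Vert B_\delta\Vert_F$ in $\tilde\delta$ — yields $\big\Vert\,\Vert B_\delta\Vert_F-\mathbb{E}\Vert B_\delta\Vert_F\,\big\Vert_{L_r}\lesssim\Vert A\Vert_{l_2\to l_2}\sqrt r$, while $\mathbb{E}\Vert B_\delta\Vert_F\le(\mathbb{E}\Vert B_\delta\Vert_F^2)^{1/2}=(\sum_{i,j}a_{ij}^2p_ip_j)^{1/2}$.

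Assembling the pieces by Minkowski's inequality, and using $r\le r^{2/\alpha}$ for $r\ge 2$ and $0<\alpha\le2$ (which absorbs the stray $r\,\Vert A\Vert_{l_2\to l_2}$ term into $r^{2/\alpha}\Vert A\Vert_{l_2\to l_2}$), delivers the target moment bound, and Lemma \ref{Lem_Moments_to_tailbound} then finishes the proof. I expect the Talagrand step to be the crux: recognizing the random Frobenius norm as a convex Lipschitz function of the masks is exactly what replaces the crude variance proxy $\Vert A\Vert_F^2$ by the sharp $\sum_{i,j}a_{ij}^2p_ip_j$, which is the sparsity improvement the proposition is after.
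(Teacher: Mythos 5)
Your proposal is correct and follows essentially the same route as the paper's proof: decoupling (Lemma \ref{Lem_decoupling_inequality}), two applications of the contraction principle to pass to symmetric Weibull variables, a conditional application of Corollary \ref{Cor_Section2_quadratic_forms}, the deterministic bound $\Vert \mathrm{Diag}(\delta)\,A\,\mathrm{Diag}(\tilde\delta)\Vert_{l_2\to l_2}\le \Vert A\Vert_{l_2\to l_2}$, two iterated applications of Talagrand's convex-concentration inequality (Lemma \ref{Lem_concentration_inequality}) to the random Frobenius norm yielding the sharp scale $\big(\sum_{i,j}a_{ij}^2p_ip_j\big)^{1/2}+\sqrt r\,\Vert A\Vert_{l_2\to l_2}$, absorption of the stray $r\Vert A\Vert_{l_2\to l_2}$ term using $2/\alpha\ge 1$, and conversion to a tail bound via Lemma \ref{Lem_Moments_to_tailbound}. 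Your two side remarks --- extending $\delta\mapsto\Vert B_\delta\Vert_F$ to the convex function $(\sum_{i,j}a_{ij}^2\delta_i^2\tilde\delta_j^2)^{1/2}$ using $\delta_i=\delta_i^2$, and justifying the bilinear bound for the non-symmetric, non-diagonal-free mask matrix via a symmetric dilation --- coincide with, or harmlessly make explicit, steps the paper uses implicitly.
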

\begin{proof}
	Let $\tilde{\xi}_{i}=\tilde{\delta}_{i}\cdot\tilde{\zeta}_{i}$ for $1\le i\le n$, where $\{\tilde{\delta}_{i}, 1\le i\le n\}$ and $\{\tilde{\zeta}_{i}, 1\le i\le n  \}$ are independent copies of $\{\delta_{i}, 1\le i\le n\}$ and $\{\zeta_{i}, 1\le i\le n  \}$, respectively. Lemma \ref{Lem_decoupling_inequality} yields  for $r\ge 1$
	\begin{align}
		\Vert \xi^{\top} A\xi\Vert_{L_{r}}\lesssim \Vert \xi^{\top} A\tilde{\xi}\Vert_{L_{r}},\nonumber
	\end{align}
where $\tilde{\xi}=(\tilde{\xi}_{1}, \cdots, \tilde{\xi}_{n})$. Let $\eta_{1}, \cdots, \eta_{n}\stackrel{i.i.d.}{\sim}\mathcal{W}_{s}(\alpha), 0<\alpha\le 2$, and  denote by $\tilde{\eta}_{i}$ the  independent copy of $\eta_{i}$, $1\le i\le n$. Using the conditional probability and Lemma \ref{Lem_contraction_principle} twice, we have for $r\ge 1$
\begin{align}
	\Vert \xi^{\top} A\tilde{\xi}/L(\alpha)^{2}\Vert_{L_{r}}&=  \Big(\mathbb{E}_{\xi}\mathbb{E}_{\tilde{\xi}}\Big\vert \frac{1}{L(\alpha)^{2}}\sum_{j}\tilde{\xi}_{j}\sum_{i}a_{ij}\xi_{i}\Big\vert^{r}\Big)^{1/r}\nonumber\\
    &\lesssim  \Big(\mathbb{E}_{\xi}\mathbb{E}_{\tilde{\delta},\tilde{\eta}}\Big\vert \frac{1}{L(\alpha)}\sum_{j}\tilde{\delta}_{j}\tilde{\eta}_{j}\sum_{i}a_{ij}\xi_{i}\Big\vert^{r}\Big)^{1/r}\lesssim\Big\Vert \sum_{i,j}a_{ij}\delta_{i}\eta_{i}\tilde{\delta}_{j}\tilde{\eta}_{j}\Big\Vert_{L_{r}}.\nonumber
\end{align}
Corollary \ref{Cor_Section2_quadratic_forms} yields for $r\ge 2$
\begin{align}\label{Eq_Section3_Proposition2_1}
	\mathbb{E}_{\eta, \tilde{\eta}}\big\vert\sum_{i,j}a_{ij}\delta_{i}\eta_{i}\tilde{\delta}_{j}\tilde{\eta}_{j}   \big\vert^{r}\le& C_{1}(\alpha)^{r}r^{r/2}\Big(\sum_{i,j}a_{ij}^{2}\delta_{i}\tilde{\delta}_{j} \Big)^{r/2}\nonumber\\
	&+C_{2}(\alpha)^{r}r^{2r/\alpha}\Vert (a_{ij}\delta_{i}\tilde{\delta}_{j})_{n\times n}\Vert_{l_{2\to l_{2}}}^{r},
\end{align}
where $\mathbb{E}_{\eta, \tilde{\eta}}$ means taking expectation with respect to $\eta_{i}, \tilde{\eta}_{i}, 1\le i\le n$.


Let $\Lambda=\textnormal{Diag}(\delta_{1}, \cdots, \delta_{n})$ be a diagonal matrix with entries $\delta_{1}, \cdots, \delta_{n}$, and $\tilde{\Lambda}=\textnormal{Diag}(\tilde{\delta}_{1}, \cdots, \tilde{\delta}_{n})$. As for the second term on the right side of \eqref{Eq_Section3_Proposition2_1}, we have 
\begin{align}\label{Eq_Section3_prop2_2}
     \Vert (a_{ij}\delta_{i}\tilde{\delta}_{j})_{n\times n}\Vert_{l_{2\to l_{2}}}=\Vert \Lambda A\tilde{\Lambda}\Vert_{l_{2}\to l_{2}}
     \le \Vert \Lambda\Vert_{l_{2}\to l_{2}}\Vert  A\Vert_{l_{2}\to l_{2}}\Vert \tilde{\Lambda}\Vert_{l_{2}\to l_{2}}\le\Vert A\Vert_{l_{2}\to l_{2}}.
\end{align}

We next turn to bounding the first term of \eqref{Eq_Section3_Proposition2_1}. Note that,  $f(x_{1}, \cdots, x_{n})=\sqrt{\sum_{i}a_{i}^{2}x_{i}^{2}}$ is a convex and Lipschitz function with $\Vert f\Vert_{\textnormal{Lip}}=\max_{i}\vert a_{i}\vert$. Hence, Lemma \ref{Lem_concentration_inequality} yields for $t>0$
\begin{align}
	\mathbb{P}_{\delta}\Big\{ \Big\vert \big(\sum_{i}\delta_{i}\sum_{j}a_{ij}^{2}\tilde{\delta}_{j}\big)^{1/2}-\mathbb{E}_{\delta}\big(\sum_{i}\delta_{i}\sum_{j}a_{ij}^{2}\tilde{\delta}_{j}\big)^{1/2} \Big\vert >\max_{i}(\sum_{j}a_{ij}^{2}\tilde{\delta}_{j})^{1/2}t \Big\}\le 4e^{-t^{2}/4},\nonumber
\end{align}
where $\mathbb{P}_{\delta}$ and $\mathbb{E}_{\delta}$ mean taking probability and expectation with respect to $\delta_{1}, \cdots, \delta_{n}$. Then, a direct integration yields for $r\ge 1$ 
\begin{align}\label{Eq_Section3_prop_3}
	&\Big(\mathbb{E}_{\delta}\big\vert \big(\sum_{i}\delta_{i}\sum_{j}a_{ij}^{2}\tilde{\delta}_{j}\big)^{1/2} \big\vert^{r}\Big)^{1/r}\nonumber\\
\lesssim&\mathbb{E}_{\delta}\big(\sum_{i}\delta_{i}\sum_{j}a_{ij}^{2}\tilde{\delta}_{j}\big)^{1/2}+\sqrt{r}\max_{i}(\sum_{j}a_{ij}^{2}\tilde{\delta}_{j})^{1/2}.\nonumber\\
	\le&  \big(\sum_{i,j}a_{ij}^{2}p_{i}\tilde{\delta}_{j}\big)^{1/2}+\sqrt{r}\max_{i}(\sum_{j}a_{ij}^{2})^{1/2}.
\end{align}
It is obvious to see that
\begin{align}
	\max_{i}(\sum_{j}a_{ij}^{2})^{1/2}=\Vert A\Vert_{l_{2}\to l_{\infty}}\le \Vert A\Vert_{l_{2}\to l_{2}}.\nonumber
\end{align}
Using Lemma \ref{Lem_concentration_inequality} again, we have for $t>0$
\begin{align}
	\mathbb{P}\Big\{\Big\vert \big(\sum_{i,j}a_{ij}^{2}p_{i}\tilde{\delta}_{j}\big)^{1/2}-\mathbb{E}\big(\sum_{i,j}a_{ij}^{2}p_{i}\tilde{\delta}_{j}\big)^{1/2}\Big\vert >\max_{j}(\sum_{i}a_{ij}^{2}p_{i})^{1/2}t \Big\}\le 4e^{-t^{2}/4}.\nonumber
\end{align}
Then for $r\ge 1$
\begin{align}\label{Eq_Section3_prop_4}
	\big\Vert \big(\sum_{i,j}a_{ij}^{2}p_{i}\tilde{\delta}_{j}\big)^{1/2} \big\Vert_{L_{r}}&\lesssim \mathbb{E}\big(\sum_{i,j}a_{ij}^{2}p_{i}\tilde{\delta}_{j}\big)^{1/2}+\sqrt{r}\max_{j}(\sum_{i}a_{ij}^{2}p_{i})^{1/2}\nonumber\\
	&\le \big(\sum_{i,j}a_{ij}^{2}p_{i}p_{j}\big)^{1/2}+\sqrt{r}\Vert A\Vert_{l_{2}\to l_{2}}.
\end{align}
By virtue of \eqref{Eq_Section3_prop_3} and \eqref{Eq_Section3_prop_4}, we have for $r\ge 1$
\begin{align}\label{Eq_Section3_prop2_5}
	\sqrt{r}\big\Vert \big(\sum_{i,j}a_{ij}^{2}\delta_{i}\tilde{\delta}_{j}\big)^{1/2} \big\Vert_{L_{r}}\lesssim \sqrt{r}\big(\sum_{i,j}a_{ij}^{2}p_{i}p_{j}\big)^{1/2}+r\Vert A\Vert_{l_{2}\to l_{2}}.
\end{align}
As $0< \alpha\leq 2$, we can absorb the last term above with $r^{2/\alpha}\Vert A\Vert_{l_{2}\to l_{2}}$. Combining \eqref{Eq_Section3_Proposition2_1} with \eqref{Eq_Section3_prop2_5}, we have for $r\ge 2$ 
\begin{align}
	\big\Vert \sum_{ij}a_{ij}\delta_{i}\eta_{i}\tilde{\delta}_{j}\tilde{\eta}_{j}  \big\Vert_{L_{r}}\lesssim_{\alpha}\sqrt{r}\big(\sum_{ij}a_{ij}^{2}p_{i}p_{j}\big)^{1/2}+r^{2/\alpha}\Vert A\Vert_{l_{2}\to l_{2}}.\nonumber
\end{align}
Hence, we finish the proof by Lemma \ref{Lem_Moments_to_tailbound} and adjusting the universal constant.
\end{proof}

Now, we are prepared to prove Theorem \ref{Theo_main1}. 
\begin{proof}[Proof of Theorem \ref{Theo_main1}]
	By the triangle inequality, we have for $r\ge 1$
	\begin{align}
		\big\Vert \xi^{\top}A\xi-\mathbb{E}\xi^{\top}A\xi \big\Vert_{L_{r}}\le 	\big\Vert \sum_{i\neq j}a_{ij}\xi_{i}\xi_{j} \big\Vert_{L_{r}}+\big\Vert \sum_{i}a_{ii}(\xi_{i}^{2}-\mathbb{E}\xi_{i}^{2})\big\Vert_{L_{r}}.\nonumber
	\end{align}

Proposition \ref{Prop_Section3_diagonal_free} yields that
\begin{align}\label{Eq_Section_Proof1_1}
		\big\Vert \sum_{i\neq j}a_{ij}\xi_{i}\xi_{j}/L(\alpha)^{2} \big\Vert_{L_{r}}\lesssim_{\alpha}\sqrt{r}\big(\sum_{i\neq j}a_{ij}^{2}p_{i}p_{j} \big)^{1/2}+r^{2/\alpha}\Vert A\Vert_{l_{2}\to l_{2}}. 
\end{align}

Let $\tilde{\xi}_{i}$ be an independent copy of $\xi_{i}$, $1\le i\le n$. Note that
\begin{align}
	\big\Vert \sum_{i}a_{ii}(\xi_{i}^{2}-\mathbb{E}\xi_{i}^{2})\big\Vert_{L_{r}}=\big\Vert \sum_{i}a_{ii}(\xi_{i}^{2}-\mathbb{E}_{\tilde{\xi}_{i}}\tilde{\xi}_{i}^{2})\big\Vert_{L_{r}}\le 2\big\Vert \sum_{i}a_{ii}\varepsilon_{i}\delta_{i}\zeta_{i}^{2}\big\Vert_{L_{r}},\nonumber
\end{align}
where $\varepsilon_{1}, \cdots, \varepsilon_{n}$ is a sequence of i.i.d. Rademacher random variables. Due to that
\begin{align}
	\mathbb{P}\{\vert \varepsilon_{i}\zeta_{i}^{2}\vert >L(\alpha)^{2}t  \}\le \mathbb{P}\{\vert \zeta_{i}\vert >L(\alpha)\sqrt{t}  \}\le 2e^{-ct^{\alpha/2}},\nonumber
\end{align}
Lemma \ref{Prop_Section3_Bernstein_sparse} yields that
\begin{align}\label{Eq_Section3_Proof1_2}
	\big\Vert \sum_{i}a_{ii}(\xi_{i}^{2}-\mathbb{E}\xi_{i}^{2})/L(\alpha)^{2}\big\Vert_{L_{r}}\lesssim_{\alpha}\sqrt{r}\big(\sum_{i}a_{ii}^{2}p_{i} \big)+r^{2/\alpha}\max_{i}\vert a_{ii}\vert.
\end{align}

By virtue of \eqref{Eq_Section_Proof1_1} and \eqref{Eq_Section3_Proof1_2}, we have
\begin{align}
   &\big\Vert (\xi^{\top}A\xi-\mathbb{E}\xi^{\top}A\xi)/L(\alpha)^{2} \big\Vert_{L_{r}}\nonumber\\
   \lesssim_{\alpha}& \sqrt{r}\big(\sum_{i\neq j}a_{ij}^{2}p_{i}p_{j}+\sum_{i}a_{ii}^{2}p_{i} \big)^{1/2}+r^{2/\alpha}\Vert A\Vert_{l_{2}\to l_{2}},
\end{align}
where we use the fact $\max_{i}\vert a_{ii}\vert\le \Vert A\Vert_{l_{2}\to l_{2}}$.
Hence, the desired result follows from Lemma \ref{Lem_Moments_to_tailbound}. 
\end{proof}

\subsection{An improved sparse Hanson-Wright inequality for $0<\alpha\le 1$}	
We complete the proof of Theorem \ref{Theo_main2} in this subsection.

\begin{proof}[Proof of Theorem \ref{Theo_main2}]

We first assume $a_{ii}=0, 1\le i\le n$.
By Lemmas \ref{Lem_decoupling_inequality} and \ref{Lem_contraction_principle}, we only need to bound the quantity $\Vert \sum_{i, j}a_{ij}\delta_{i}\eta_{i}\tilde{\delta}_{j}\tilde{\eta}_{j}\Vert_{L_{r}}$. Here, $\eta_{1}, \cdots, \eta_{n}\stackrel{i.i.d.}{\sim}\mathcal{W}_{s}(\alpha)$, and for $1\le i\le n$, $\tilde{\delta}_{i}, \tilde{\eta}_{i}$ are independent copies of $\delta_{i}, \eta_{i}$, respectively.

Note that, for $1\le i\le n$,
$$		-\log \mathbb{P}\{\vert\delta_{i}\eta_{i}\vert\ge t \} = 
		\begin{cases}
			0, & t = 0 \\
			t^{\alpha}-\log p_{i}, & t > 0
		\end{cases}
$$
is a concave function for $t\ge 0$. Hence, by Lemma \ref{Lem_Lp_linearsum}, we have for $r\ge 2$
\begin{align}\label{Eq_Section3_proof2_1}
	\Big(\mathbb{E}_{\delta, \eta}\big\vert \sum_{i}\delta_{i}\eta_{i}\sum_{j}a_{ij}\tilde{\delta}_{j}\tilde{\eta}_{j} \big\vert^{r}\Big)^{1/r}\lesssim_{\alpha}& \,r^{1/\alpha}\Big( \sum_{i}p_{i}\big\vert \sum_{j}a_{ij}\tilde{\delta}_{j}\tilde{\eta}_{j} \big\vert^{r} \Big)^{1/r}\nonumber\\
	&+\sqrt{r}\Big( \sum_{i}p_{i}(\sum_{j}a_{ij}\tilde{\delta}_{j}\tilde{\eta}_{j})^{2}  \Big)^{1/2},
\end{align}
where $\mathbb{E}_{\delta, \eta}$ means taking expectation with respect to $\{\delta_{i}, \eta_{i}, 1\le i\le n  \}$.

Combining Fubini's theorem with \eqref{Eq_Section3_proof2_1}, we have for $r\ge 2$
\begin{align}\label{Eq_Section3_proof2_added_1}
    \Big\Vert \sum_{i, j}a_{ij}\delta_{i}\eta_{i}\tilde{\delta}_{j}\tilde{\eta}_{j}\Big\Vert_{L_{r}}=&\Big(\mathbb{E}_{\tilde{\delta}, \tilde{\eta}}\mathbb{E}_{\delta, \eta}\big\vert \sum_{i}\delta_{i}\eta_{i}\sum_{j}a_{ij}\tilde{\delta}_{j}\tilde{\eta}_{j} \big\vert^{r}\Big)^{1/r}\nonumber\\
    \lesssim_{\alpha} &r^{1/\alpha}\Big(\mathbb{E}\sum_{i}p_{i}\big\vert \sum_{j}a_{ij}\tilde{\delta}_{j}\tilde{\eta}_{j} \big\vert^{r}\Big)^{1/r} \nonumber\\
    &+\sqrt{r}\Big\Vert\Big( \sum_{i}p_{i}(\sum_{j}a_{ij}\tilde{\delta}_{j}\tilde{\eta}_{j})^{2}  \Big)^{1/2}\Big\Vert_{L_{r}},
\end{align}
where the last inequality we use the fact $(a+b)^{r}\le 2^{r}(\vert a\vert^{r}+\vert b\vert^{r})$.

Next, we turn to the terms on the right side of \eqref{Eq_Section3_proof2_added_1}.
For the first term, we have
\begin{align}\label{Eq_Section3_proof2_2}
	\Big(\mathbb{E}\sum_{i}p_{i}\big\vert \sum_{j}a_{ij}\tilde{\delta}_{j}\tilde{\eta}_{j} \big\vert^{r}\Big)^{1/r} &=\Big(\sum_{i}p_{i}\Big\Vert \sum_{j}a_{ij}\tilde{\delta}_{j}\tilde{\eta}_{j} \Big\Vert^{r}_{L_{r}}\Big)^{1/r}\nonumber\\
    &\lesssim \Big(\sum_{i}p_{i} \Big(\big(\sum_{j}\mathbb{E}\vert a_{ij}\tilde{\delta}_{j}\tilde{\eta}_{j}\vert^{r}\big)^{1/r}+\sqrt{r}\big(\sum_{j}\mathbb{E}\vert a_{ij}\tilde{\delta}_{j}\tilde{\eta}_{j}\vert^{2} \big)^{1/2}\Big)^{r} \Big)^{1/r}    \nonumber\\
    &\lesssim_{\alpha} r^{1/\alpha}\Big( \sum_{i,j}\vert a_{ij}\vert^{r} p_{i}p_{j}  \Big)^{1/r}
	+\sqrt{r}\Big(\sum_{i}p_{i}\big(\sum_{j}a_{ij}^{2}p_{j} \big)^{r/2}  \Big)^{1/r},
\end{align}
where we use Lemma \ref{Lem_Lp_linearsum} in the second step and use the triangle inequality in the third step.

 For the second term on the right side of \eqref{Eq_Section3_proof2_added_1}, note that for $g_{1}, \cdots, g_{n}\stackrel{i.i.d.}{\sim}N(0, 1)$ and $a_{1}, \cdots, a_{n}\in\mathbb{R}$
\begin{align}
	\Big\Vert \sum_{i}a_{i}g_{i}\Big\Vert_{L_{r}}\asymp\sqrt{r}\big(\sum_{i}a_{i}^{2}\big)^{1/2}.\nonumber
\end{align}
Hence, we have by Lemma \ref{Lem_Lp_linearsum}
\begin{align}
	&\mathbb{E}\Big(r \sum_{i}p_{i}(\sum_{j}a_{ij}\tilde{\delta}_{j}\tilde{\eta}_{j})^{2}  \Big)^{r/2}\nonumber\\
    \le& \,C^{r}\mathbb{E}\mathbb{E}_{g}\Big(\sum_{i}\sqrt{p_{i}}g_{i}\sum_{j}a_{ij}\tilde{\delta}_{j}\tilde{\eta}_{j}   \Big)^{r}\nonumber\\
	=& \, C^{r}\mathbb{E}\mathbb{E}_{\tilde{\delta}, \tilde{\eta}}\Big(\sum_{j}\tilde{\delta}_{j}\tilde{\eta}_{j} \sum_{i}a_{ij}\sqrt{p_{i}}g_{i}  \Big)^{r}\nonumber\\
    \le& C_{1}(\alpha)^{r} \mathbb{E}\Big( r^{r/\alpha}\sum_{j}p_{j}\big\vert \sum_{i}a_{ij}\sqrt{p_{i}}g_{i}\big\vert^{r}+r^{r/2}\big(\sum_{j}p_{j}\big(\sum_{i}a_{ij}\sqrt{p_{i}}g_{i}\big)^{2} \big)^{r/2}   \Big)\nonumber\\
	\le & \, C_{2}(\alpha)^{r}r^{r/\alpha+r/2}\sum_{j}p_{j}\big(\sum_{i}a_{ij}^{2}p_{i} \big)^{r/2}\, +C_{3}(\alpha)^{r}\mathbb{E}\Big(\sum_{i, j}a_{ij}\sqrt{p_{i}p_{j}}g_{i}\tilde{g}_{j} \Big)^{r},\nonumber
\end{align}
where $\tilde{g}_{j}$ is an independent copy of $g_{j}$, $1\le j\le n$.
Note that $g_{i}$ are also sub-gaussian random variables.
Hence, we have by Corollary \ref{Cor_Section2_quadratic_forms} and Lemma \ref{Lem_contraction_principle}
\begin{align}
	\Big\Vert \sum_{i, j}a_{ij}\sqrt{p_{i}p_{j}}g_{i}\tilde{g}_{j}\Big\Vert_{L_{r}}\lesssim \sqrt{r}\Big(\sum_{i,j}a_{ij}^{2}p_{i}p_{j} \Big)^{1/2}+r\Vert (a_{ij}\sqrt{p_{i}p_{j}})_{n\times n}\Vert_{l_{2}\to l_{2}}.\nonumber
\end{align}

Up to now, we have proved for diagonal free symmetric matrix $A$
\begin{align}
	\Big\Vert  \sum_{i,j}a_{ij}\delta_{i}\eta_{i}\tilde{\delta}_{j}\tilde{\eta}_{j} \Big\Vert_{L_{r}}\lesssim_{\alpha}\,& r^{2/\alpha}\Big(\sum_{ij}\vert a_{ij}\vert^{r}p_{i}p_{j}  \Big)^{1/r} + r^{1/2+1/\alpha}\Big(\sum_{i}p_{i}\big(\sum_{j}a_{ij}^{2}p_{j} \big)^{r/2}  \Big)^{1/r}\nonumber\\
	&+r\Vert (a_{ij}\sqrt{p_{i}p_{j}})_{n\times n}\Vert_{l_{2}\to l_{2}}
    +\sqrt{r}\Big(\sum_{i,j}a_{ij}^{2}p_{i}p_{j} \Big)^{1/2}.\nonumber
\end{align}
Note that for $r\ge 3$, by Young's inequality, we have
\begin{align}
	\Big(\sum_{i,j}\vert a_{ij}\vert^{r}p_{i}p_{j} \Big)^{1/r}&\le \max_{i,j}\vert a_{ij}\vert^{(r-2)/r}\cdot\Big( \sum_{i, j}a_{ij}^{2}p_{i}p_{j} \Big)^{1/r}\nonumber\\
	&=\Big( e^{2r/(r-2)}\max_{i,j}\vert a_{ij}\vert \Big)^{(r-2)/r}\cdot\Big( \sum_{i ,j}a_{ij}^{2}p_{i}p_{j}e^{-2r} \Big)^{1/r}\nonumber\\
	&\le \frac{r-2}{r}\Big( e^{2r/(r-2)}\max_{i,j}\vert a_{ij}\vert \Big)+\frac{2}{r}\Big( \sum_{i, j}a_{ij}^{2}p_{i}p_{j}e^{-2r} \Big)^{1/2}\nonumber\\
	&\le e^{6}\Vert A\Vert_{\infty}+e^{-r}\Big( \sum_{i, j}a_{ij}^{2}p_{i}p_{j} \Big)^{1/2}.\nonumber
\end{align}
Similarly
\begin{align}
	\Big(\sum_{i}p_{i}\big(\sum_{j}a_{ij}^{2}p_{j} \big)^{r/2}  \Big)^{1/r}\le e^{6}\max_{i} \Big(\sum_{j}a_{ij}^{2}p_{j} \Big)^{1/2}+e^{-r}\Big( \sum_{i, j}a_{ij}^{2}p_{i}p_{j} \Big)^{1/2}.\nonumber
\end{align}
Note that the $e^{-r}$ factors we introduced will eliminate the $r^{2/\alpha}, r^{1/2 +1/\alpha}$ factors in the corresponding terms. Hence, we have for $r\ge 3$
\begin{align}\label{Eq_Section3_proof2_3}
	\Big\Vert  \sum_{i,j}a_{ij}\delta_{i}\eta_{i}\tilde{\delta}_{j}\tilde{\eta}_{j} \Big\Vert_{L_{r}}\lesssim_{\alpha}&r^{2/\alpha}\Vert A\Vert_{\infty}+r^{1/2+1/\alpha}\max_{i} \Big(\sum_{j}a_{ij}^{2}p_{j} \Big)^{1/2}\nonumber\\
	&+r\Vert (a_{ij}\sqrt{p_{i}p_{j}})_{n\times n}\Vert_{l_{2}\to l_{2}}+\sqrt{r}\Big(\sum_{i,j}a_{ij}^{2}p_{i}p_{j} \Big)^{1/2}.
\end{align}

For a general symmetric matrix $A$, we only need to estimate the diagonal case. Following the same line as in the proof of Theorem \ref{Theo_main1}, we have 
\begin{align}\label{Eq_Section3_proof2_4}
	\big\Vert \sum_{i}a_{ii}(\xi_{i}^{2}-\mathbb{E}\xi_{i}^{2})/L(\alpha)^{2}\big\Vert_{L_{r}}\lesssim_{\alpha}\sqrt{r}\big(\sum_{i}a_{ii}^{2}p_{i} \big)^{1/2}+r^{2/\alpha}\max_{i}\vert a_{ii}\vert.
\end{align}
Combining \eqref{Eq_Section3_proof2_3} with \eqref{Eq_Section3_proof2_4}, the desired result follows from Lemma \ref{Lem_Moments_to_tailbound}.
\end{proof}
\section{Proofs of applications}
\subsection{Proof of Theorem \ref{thm:RIP}}
Although Theorem~\ref{thm:RIP} is not a literal corollary of our main theorem, its proof follows the same roadmap we developed for Theorem~\ref{Theo_main1}; thus it can be viewed as an application of our techniques. Before prove Theorem \ref{thm:RIP}, we first show the following properties.

\begin{prop}\label{Proposition_application_covariace_1}

Let $\delta\in\mathbb{R}^d$ and $B\in \mathbb{R}^{d\times m}$  as in Theorem \ref{thm:RIP}. Let $A=\mathrm{Diag}(a_1, \cdots, a_d)$ be a diagonal matrix. For any $r\ge 1$, we have
\begin{align}
    &\big(\mathbb{E}\Vert  B^\T\cdot \mathrm{Diag}(\delta)\cdot A\cdot \mathrm{Diag}(\delta)\cdot B\Vert^{r}_{F} \big)^{1/r}\nonumber\\
    \lesssim& \Vert B\Vert_{l_2\to l_2}\cdot\Vert A\Vert_{l_2\to l_2}\big(\Vert \mathrm{Diag}(\sqrt{p_1}, \cdots, \sqrt{p_d} )\cdot B\Vert_{F}+ r^{1/2}\Vert B\Vert_{l_2\to l_2}\big).
\end{align}  
\end{prop}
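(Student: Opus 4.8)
The plan is to reduce the matrix to a weighted sum of rank-one terms, split it into its deterministic mean and a fluctuation, bound the mean directly by submultiplicativity of the norms, and control the fluctuation with Talagrand's convex concentration inequality (Lemma~\ref{Lem_concentration_inequality}). Write $b_i^\top\in\mathbb{R}^{1\times m}$ for the $i$-th row of $B$, so that $B^\top\mathrm{Diag}(c_1,\dots,c_d)B=\sum_{i=1}^d c_i\, b_i b_i^\top$, and set $P^{1/2}=\mathrm{Diag}(\sqrt{p_1},\dots,\sqrt{p_d})$. Since $\delta_i\in\{0,1\}$ we have $\delta_i^2=\delta_i$, hence $\mathrm{Diag}(\delta)A\,\mathrm{Diag}(\delta)=\mathrm{Diag}(\delta_i a_i)$ and the target matrix equals $W:=\sum_{i=1}^d \delta_i a_i\, b_i b_i^\top$. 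By the triangle inequality, $\Vert W\Vert_F\le \Vert \mathbb{E}W\Vert_F+\Vert W-\mathbb{E}W\Vert_F$, so after taking $L_r$ norms it suffices to bound $\Vert\mathbb{E}W\Vert_F$ and $\big\Vert\,\Vert W-\mathbb{E}W\Vert_F\big\Vert_{L_r}$ separately.

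For the mean term I would write $\mathbb{E}W=B^\top\mathrm{Diag}(p_i a_i)B=(P^{1/2}B)^\top A\,(P^{1/2}B)$, and then apply the submultiplicative bound $\Vert XY\Vert_F\le \Vert X\Vert_{l_2\to l_2}\Vert Y\Vert_F$ twice together with $\Vert P^{1/2}B\Vert_{l_2\to l_2}\le\Vert B\Vert_{l_2\to l_2}$ (valid since $p_i\le 1$). This yields $\Vert\mathbb{E}W\Vert_F\le \Vert B\Vert_{l_2\to l_2}\Vert A\Vert_{l_2\to l_2}\Vert P^{1/2}B\Vert_F$, which is exactly the first term of the claimed bound.

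For the fluctuation I view $h(\delta):=\Vert W-\mathbb{E}W\Vert_F=\big\Vert B^\top\mathrm{Diag}((\delta_i-p_i)a_i)B\big\Vert_F$ as a function of the independent variables $\delta_1,\dots,\delta_d$ with $|\delta_i|\le 1$. It is convex, being the composition of the Frobenius norm with an affine map of $\delta$. To get its Lipschitz constant I estimate, for any $v\in\mathbb{R}^d$, $\big\Vert\sum_i v_i a_i b_i b_i^\top\big\Vert_F=\Vert B^\top\mathrm{Diag}(v_i a_i)B\Vert_F\le \Vert B\Vert_{l_2\to l_2}\Vert \mathrm{Diag}(v_i a_i)B\Vert_F$, and since $\Vert\mathrm{Diag}(v_i a_i)B\Vert_F^2=\sum_i v_i^2 a_i^2\Vert b_i\Vert_2^2\le (\max_i\Vert b_i\Vert_2^2)\Vert A\Vert_{l_2\to l_2}^2\Vert v\Vert_2^2$ with $\max_i\Vert b_i\Vert_2\le\Vert B\Vert_{l_2\to l_2}$, the Lipschitz constant is $L:=\Vert B\Vert_{l_2\to l_2}^2\Vert A\Vert_{l_2\to l_2}$. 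Applying Lemma~\ref{Lem_concentration_inequality} to $h/L$ (with $K=1$) gives $\mathbb{P}\{|h-\mathbb{E}h|>Lt\}\le 4e^{-t^2/4}$, and integrating this sub-Gaussian tail (or invoking Lemma~\ref{Lem_Moments_to_tailbound}) yields $\big\Vert h\big\Vert_{L_r}\lesssim \mathbb{E}h+\sqrt{r}\,L$ for $r\ge 1$.

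It remains to control $\mathbb{E}h\le(\mathbb{E}h^2)^{1/2}$. Expanding, $\mathbb{E}\Vert W-\mathbb{E}W\Vert_F^2=\sum_{i,j}\mathbb{E}[(\delta_i-p_i)(\delta_j-p_j)]\,a_i a_j(b_i^\top b_j)^2$; by independence and mean-zero only the diagonal survives, giving $\sum_i p_i(1-p_i)a_i^2\Vert b_i\Vert_2^4\le(\max_i a_i^2\Vert b_i\Vert_2^2)\sum_i p_i\Vert b_i\Vert_2^2\le \Vert A\Vert_{l_2\to l_2}^2\Vert B\Vert_{l_2\to l_2}^2\Vert P^{1/2}B\Vert_F^2$, using $\sum_i p_i\Vert b_i\Vert_2^2=\Vert P^{1/2}B\Vert_F^2$. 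Thus $\mathbb{E}h\le \Vert A\Vert_{l_2\to l_2}\Vert B\Vert_{l_2\to l_2}\Vert P^{1/2}B\Vert_F$, matching the mean term, and collecting the three pieces gives the stated inequality. I expect the main technical point to be the clean derivation of the Lipschitz constant $L$; the factorization $\sum_i v_i a_i b_i b_i^\top=B^\top\mathrm{Diag}(v_i a_i)B$ is what makes this elementary and avoids any Hadamard-product norm estimate.
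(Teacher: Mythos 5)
Your proof is correct, and it takes a recognizably different route from the paper's, even though both hinge on the same key tool, Talagrand's convex concentration inequality (Lemma \ref{Lem_concentration_inequality}). The paper factorizes $\mathrm{Diag}(\delta)\,A\,\mathrm{Diag}(\delta)=\big(\mathrm{Diag}(\delta)\sqrt{A}\big)\big(\sqrt{A}\,\mathrm{Diag}(\delta)\big)$ with a \emph{complex} diagonal square root $\sqrt{A}$ (needed since the $a_i$ may be negative), peels off $\Vert B\Vert_{l_2\to l_2}\Vert\sqrt{A}\Vert_{l_2\to l_2}$ by submultiplicativity, and then concentrates the reduced quantity $\Vert\sqrt{A}\,\mathrm{Diag}(\delta)B\Vert_F$, which is a weighted $\ell_2$ norm of $\delta$ whose Lipschitz constant $\Vert\sqrt{A}\Vert_{l_2\to l_2}\Vert B\Vert_{l_2\to l_\infty}$ is immediate; the mean is handled by Cauchy--Schwarz. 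You instead exploit $\delta_i^2=\delta_i$ to write the target matrix as $W=\sum_i\delta_i a_i b_i b_i^\top$, affine in $\delta$, split $\Vert W\Vert_F\le\Vert\mathbb{E}W\Vert_F+\Vert W-\mathbb{E}W\Vert_F$, concentrate the centered Frobenius norm directly (your Lipschitz computation via $\sum_i v_i a_i b_ib_i^\top=B^\top\mathrm{Diag}(v_ia_i)B$ is clean and gives the right constant $\Vert B\Vert_{l_2\to l_2}\Vert A\Vert_{l_2\to l_2}\Vert B\Vert_{l_2\to l_\infty}\le\Vert B\Vert_{l_2\to l_2}^2\Vert A\Vert_{l_2\to l_2}$), and compute the variance of the fluctuation exactly, where independence kills the cross terms and leaves $\sum_i p_i(1-p_i)a_i^2\Vert b_i\Vert_2^4$. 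What each route buys: the paper's is shorter, since a single submultiplicativity step reduces everything to a scalar function of $\delta$; yours avoids the complex square root entirely (sidestepping the mildly delicate point of feeding a complex-matrix-defined, though still real convex Lipschitz, function into Talagrand's lemma), concentrates the actual quantity of interest rather than an upper bound, and your exact variance identity is in principle slightly sharper thanks to the $p_i(1-p_i)$ factors, though both arguments land on the same stated bound.
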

\begin{proof}


    Note that
    \begin{align}
        &\Vert  B^\T\cdot \mathrm{Diag}(\delta)\cdot A\cdot \mathrm{Diag}(\delta)\cdot B\Vert_{F}\nonumber\\
        \le& \Vert B^\T \cdot\mathrm{Diag}(\delta)\cdot\sqrt{A}\Vert_{l_2\to l_2}\cdot\Vert \sqrt{A}\cdot\mathrm{Diag}(\delta)\cdot B\Vert_{F}\nonumber\\
        \le& \Vert B\Vert_{l_2\to l_2}\cdot\Vert \sqrt{A}\Vert_{l_2\to l_2}\cdot\Vert \sqrt{A}\cdot\mathrm{Diag}(\delta)\cdot B\Vert_{F},
    \end{align}
    where $\sqrt{A}$ is a (complex) diagonal matrix such that $\sqrt{A}\cdot\sqrt{A}=A$. Talagrand's concentration inequality (Lemma \ref{Lem_concentration_inequality}) yields that
    \begin{align}
        &\mathbb{P}\Big\{\big\vert \Vert \sqrt{A}\cdot\mathrm{Diag}(\delta)\cdot B\Vert_{F}-\mathbb{E}\Vert \sqrt{A}\cdot\mathrm{Diag}(\delta)\cdot B\Vert_{F}  \big\vert>t   \Big\}\nonumber\\
        \le& 2\exp\{-\frac{t^2}{\big(\Vert \sqrt{A}\Vert_{l_2\to l_2}\cdot \Vert B\Vert_{l_2\to \infty}\big)^2} \}.
    \end{align}
    We finish the proof due to that
    \begin{align}
        \mathbb{E}\Vert \sqrt{A}\cdot\mathrm{Diag}(\delta)\cdot B\Vert_{F}&\le \big(\mathbb{E}\Vert \sqrt{A}\cdot\mathrm{Diag}(\delta)\cdot B\Vert_{F}^2\big)^{1/2}\nonumber\\
        &\le \Vert \sqrt{A}\Vert_{l_2\to l_2}\cdot \Vert \mathrm{Diag}(\sqrt{p_1}, \cdots, \sqrt{p_d} )\cdot B\Vert_{F}.
    \end{align}
\end{proof}

\begin{prop}\label{Proposition_application_covariace_2}
    Let $\delta\in\mathbb{R}^d$ and $B\in \mathbb{R}^{d\times m}$  as in Theorem \ref{thm:RIP}. For a fixed vector $x\in\mathbb{R}^{d}$, denote $A=xx^\T$. For any $r\ge 1$, we have
\begin{align}
    \big(\mathbb{E}\Vert  B^\T\cdot \mathrm{Diag}(\delta)\cdot A\cdot \mathrm{Diag}(\delta)\cdot B\Vert^{r}_{F} \big)^{1/r}\lesssim \Vert B\Vert_{l_2\to l_2}^2\Vert x\Vert_{2}^{2}.
\end{align}  

\end{prop}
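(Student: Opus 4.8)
The plan is to exploit the rank-one structure of $A=xx^\T$ to collapse the Frobenius norm into a squared Euclidean norm, after which the bound follows from a purely deterministic (pointwise in $\delta$) estimate, so that—unlike Proposition~\ref{Proposition_application_covariace_1}—no concentration argument is needed.

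First I would observe that, since $\mathrm{Diag}(\delta)x=x\circ\delta$, we have
$$\mathrm{Diag}(\delta)\,A\,\mathrm{Diag}(\delta)=\mathrm{Diag}(\delta)\,xx^\T\,\mathrm{Diag}(\delta)=(x\circ\delta)(x\circ\delta)^\T,$$
so that, writing $v:=B^\T(x\circ\delta)\in\mathbb{R}^{m}$,
$$B^\T\,\mathrm{Diag}(\delta)\,A\,\mathrm{Diag}(\delta)\,B=vv^\T.$$
The Frobenius norm of a rank-one matrix satisfies $\Vert vv^\T\Vert_F=\Vert v\Vert_2^2$ (indeed $\Vert vv^\T\Vert_F^2=\sum_{i,j}(v_iv_j)^2=\Vert v\Vert_2^4$), which reduces the quantity of interest to $\Vert v\Vert_2^2=\Vert B^\T(x\circ\delta)\Vert_2^2$.

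Next I would bound $\Vert v\Vert_2$ deterministically. By submultiplicativity of the operator norm, $\Vert v\Vert_2\le\Vert B^\T\Vert_{l_2\to l_2}\,\Vert x\circ\delta\Vert_2=\Vert B\Vert_{l_2\to l_2}\,\Vert x\circ\delta\Vert_2$, and since $\delta_i\in\{0,1\}$ we have $\Vert x\circ\delta\Vert_2^2=\sum_i x_i^2\delta_i\le\Vert x\Vert_2^2$. Hence $\Vert v\Vert_2\le\Vert B\Vert_{l_2\to l_2}\Vert x\Vert_2$ holds pointwise in $\delta$, so $\Vert vv^\T\Vert_F\le\Vert B\Vert_{l_2\to l_2}^2\Vert x\Vert_2^2$ with no randomness remaining. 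Taking the $L_r$ norm over $\delta$ then yields $\big(\mathbb{E}\Vert B^\T\,\mathrm{Diag}(\delta)\,A\,\mathrm{Diag}(\delta)\,B\Vert_F^r\big)^{1/r}\le\Vert B\Vert_{l_2\to l_2}^2\Vert x\Vert_2^2$ for every $r\ge1$, with absolute constant.

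There is essentially no obstacle here: the only conceptual point is that the rank-one form of $A$, combined with the boundedness $\delta_i\in\{0,1\}$, produces a deterministic bound, so the Talagrand concentration step that was required for a general diagonal $A$ in Proposition~\ref{Proposition_application_covariace_1} can be bypassed entirely, and in particular the bound carries no dependence on $r$.
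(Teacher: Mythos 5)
Your proof is correct and is essentially identical to the paper's: both exploit the rank-one identity $B^\T\,\mathrm{Diag}(\delta)\,xx^\T\,\mathrm{Diag}(\delta)\,B=vv^\T$ with $v=B^\T(x\circ\delta)$ (the paper writes this as $B^\T\,\mathrm{Diag}(x)\,\delta\delta^\T\,\mathrm{Diag}(x)\,B$), then use $\Vert vv^\T\Vert_F=\Vert v\Vert_2^2$, submultiplicativity, and $\Vert x\circ\delta\Vert_2\le\Vert x\Vert_2$ to obtain a deterministic pointwise bound from which the $L_r$ estimate is immediate. Your remark that no concentration step is needed, in contrast to Proposition~\ref{Proposition_application_covariace_1}, accurately reflects the paper's argument as well.
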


\begin{proof}
    

\begin{align}
    &\Vert  B^\T\cdot \mathrm{Diag}(\delta)\cdot A\cdot \mathrm{Diag}(\delta)\cdot B\Vert_{F}\nonumber\\
    =&\Vert  B^\T\cdot \mathrm{Diag}(x)\cdot \delta\delta^\T\cdot \mathrm{Diag}(x)\cdot B\Vert_{F}\nonumber\\
    =&\Vert  B^\T\cdot \mathrm{Diag}(x)\cdot \delta\Vert_{2}^{2}\le \Vert B\Vert_{l_2\to l_2}^2 \Vert \mathrm{Diag}(x)\cdot\delta\Vert^2_{2}\le \Vert B\Vert_{l_2\to l_2}^2\Vert x\Vert_{2}^{2}.
\end{align}

\end{proof}

\begin{proof}[Proof of Theorem~\ref{thm:RIP}]
    For each fixed $\theta$, we can rewrite
    \begin{align}
        \theta^\T(\widehat{\Sigma} - \Sigma)\theta& = \frac{1}{n}\sum_{s=1}^n \left[{X^{(s)}}^\T A_{\theta,p} X^{(s)} - \E({X^{(s)}}^\T A_{\theta,p} X^{(s)})\right]\nonumber\\
        &= \frac{1}{n}\sum_{s=1}^n \left[{\xi^{(s)}}^\T B^\T \mathrm{Diag}(\delta^{(s)})A_{\theta, p}\mathrm{Diag}(\delta^{(s)})B\xi^{(s)} - \E({X^{(s)}}^\T A_{\theta,p} X^{(s)})\right]\nonumber\\
        &=\tilde{\xi}^\T\tilde{B}^\T\mathrm{Diag}(\tilde{\delta})\tilde{A}_{\theta, p}\mathrm{Diag}(\tilde{\delta})\tilde{B}\tilde{\xi}-\mathbb{E}\tilde{\xi}^\T\tilde{B}^\T\mathrm{Diag}(\tilde{\delta})\tilde{A}_{\theta, p}\mathrm{Diag}(\tilde{\delta})\tilde{B}\tilde{\xi}.
    \end{align}
    Here
    \begin{align}
        &\tilde{\xi}=({\xi^{(1)}}^\T, \cdots,{\xi^{(n)}}^\T)^\T\in \mathbb{R}^{mn},\quad \tilde{B}=\mathrm{Diag}(B, B, \cdots, B)\in R^{dn\times mn},\nonumber\\
        &\tilde{\delta}=({\delta^{(1)}}^\T, \cdots, {\delta^{(n)}}^\T)^{\T}\in \mathbb{R}^{dn\times dn},\quad \tilde{A}_{\theta, p}=\mathrm{Diag}(A_{\theta, p}, \cdots, A_{\theta, p})/n\in\mathbb{R}^{dn\times dn}.
    \end{align}
Hence, by virtue of the decoupling inequality and Corollary \ref{Cor_Section2_quadratic_forms}, we have for $r\ge 1$
\begin{align}\label{Eq_bound1}
    \Vert \theta^\T(\widehat{\Sigma} - \Sigma)\theta\Vert_{L_r}&\lesssim_{\alpha}\Vert \tilde{\xi}^\T\tilde{B}^\T\mathrm{Diag}(\tilde{\delta})\tilde{A}_{\theta, p}\mathrm{Diag}(\tilde{\delta})\tilde{B}\tilde{\eta}\Vert_{L_r}\nonumber\\
    &\lesssim_{\alpha} r^{1/2}\big(\mathbb{E}\Vert  \tilde{B}^\T\mathrm{Diag}(\tilde{\delta})\tilde{A}_{\theta, p}\mathrm{Diag}(\tilde{\delta})\tilde{B}\Vert_{F}^{r}\big)^{1/r}\nonumber\\
    & \qquad+r^{2/\alpha}\big(\mathbb{E}\Vert  \tilde{B}^\T\mathrm{Diag}(\tilde{\delta})\tilde{A}_{\theta, p}\mathrm{Diag}(\tilde{\delta})\tilde{B}\Vert_{l_2\to l_2}^{r}\big)^{1/r},
    \end{align}
    where $\tilde{\eta}$ is an independent copy of $\tilde{\xi}$. 

For the first term on the right-side of \eqref{Eq_bound1}, we have
\begin{align}
    \Vert  \tilde{B}^\T\mathrm{Diag}(\tilde{\delta})\tilde{A}_{\theta, p}\mathrm{Diag}(\tilde{\delta})\tilde{B}\Vert_{F}=\Big(\frac{1}{n^2}\sum_{s=1}^{n}\Vert B^\T\mathrm{Diag}(\delta^{(s)})A_{\theta, p}\mathrm{Diag}(\delta^{(s)}) B\Vert_{F}^2\Big)^{1/2}
\end{align}
Note that
\begin{align}\label{Eq_123456}
    A_{\theta, p}=(\theta\circ\frac{1}{p})(\theta\circ\frac{1}{p})^\T-\mathrm{Diag}((\theta\circ\frac{1}{p})(\theta\circ\frac{1}{p})^\T)+\mathrm{Diag}(A_{\theta, p}).
\end{align}
Hence, we can split the above boundary into two parts according to the division of $A_{\theta, p}$, the boundary formed by $(\theta\circ\frac{1}{p})(\theta\circ\frac{1}{p})^\T$ and the boundary formed by $\mathrm{Diag}(A_{\theta, p})-\mathrm{Diag}((\theta\circ\frac{1}{p})(\theta\circ\frac{1}{p})^\T)$. Propositions \ref{Proposition_application_covariace_1} and \ref{Proposition_application_covariace_2} yield that, $\Vert B^\T\mathrm{Diag}(\delta)A_{\theta, p}\mathrm{Diag}(\delta)B\Vert_{F}$ is a sub-gaussian random variable with  sub-gaussian norm 
\begin{align}
    K_1(\theta)&=\Vert B\Vert_{l_2\to l_2}\cdot(\mathrm{Diag}(\sqrt{p_1}, \cdots, \sqrt{p_d} )\cdot B\Vert_{F}+\Vert B\Vert_{l_2\to l_2})\cdot\Vert(\theta\circ\frac{1}{p})(\theta\circ\frac{1}{p})^\T \Vert_{l_2\to l_2}\nonumber\\
    &=\Vert B\Vert_{l_2\to l_2}\cdot(\mathrm{Diag}(\sqrt{p_1}, \cdots, \sqrt{p_d} )\cdot B\Vert_{F}+\Vert B\Vert_{l_2\to l_2})\cdot\Vert\theta\circ\frac{1}{p} \Vert^2_{2}
\end{align}
Hence, Bernstein's inequality yields that
\begin{align}
    \mathbb{P}\Big\{\Big\vert\frac{1}{n^2}\sum_{s=1}^{n}\Big(\Vert B^\T\mathrm{Diag}(\delta^{(s)})A_{\theta, p}\mathrm{Diag}(\delta^{(s)})B\Vert_{F}^{2}-&\mathbb{E}\Vert B^\T\mathrm{Diag}(\delta^{(s)})A_{\theta, p}\mathrm{Diag}(\delta^{(s)})B\Vert_{F}^2\Big) \Big\vert\nonumber\\
    &>K_{1}(\theta)^2t \Big\}
    \le 2\exp\{-c\min\{ n^3t^2, n^2t \}\},
\end{align}
implying that
\begin{align}
    \Big(\mathbb{E}\Big(\frac{1}{n^2}\sum_{s=1}^{n}\Vert B^\T\mathrm{Diag}(\delta^{(s)})A_{\theta, p}\mathrm{Diag}(\delta^{(s)}) B\Vert_{F}^2\Big)^{r/2} \Big)^{2/r}&\lesssim \frac{1}{n}\mathbb{E}\Vert B^\T\mathrm{Diag}(\delta^{(s)})A_{\theta, p}\mathrm{Diag}(\delta^{(s)})B\Vert_{F}^2\nonumber\\
    &+(\frac{r}{n^3})^{1/2}K^2_{1}(\theta)+\frac{r}{n^2}K^2_{1}(\theta)
\end{align}
Hence, by virtue of \eqref{Eq_123456}, the first term on the right hand side of \eqref{Eq_bound1} can be further bounded by 
\begin{align}
    (\frac{r}{n})^{1/2}K_2(\theta)+(\frac{r}{n})^{3/4}K_{1}(\theta)+\frac{r}{n}K_{1}(\theta),
\end{align}
where
\begin{align}
    K_2(\theta):=\Big(\mathbb{E}\Vert B^\T\mathrm{Diag}(\delta^{(1)})A_{\theta, p}\mathrm{Diag}(\delta^{(1)})B\Vert_{F}^2\Big)^{1/2}.
\end{align}

For the second term  on the right-side of \eqref{Eq_bound1}, we have
\begin{align}
    \Vert  \tilde{B}^\T\mathrm{Diag}(\tilde{\delta})\tilde{A}_{\theta, p}\mathrm{Diag}(\tilde{\delta})\tilde{B}\Vert_{l_2\to l_2}\le \frac{1}{n}\Vert B\Vert_{l_2\to l_2}^2\cdot\Vert A_{\theta, p}\Vert_{l_2\to l_2}.
\end{align}
Hence, we have for $r\ge1$
\begin{align}
    \Vert \theta^\T(\widehat{\Sigma} - \Sigma)\theta\Vert_{L_r}&\lesssim_{\alpha} \frac{K_2(\theta)r^{1/2}}{\sqrt{n}}+\frac{K_{1}(\theta)r^{3/4}}{n^{3/4}}+\frac{K_{1}(\theta)r^{2/\alpha}}{n},
\end{align}
implying that
\begin{align}\label{Eq_application_covariance_1}
   &\mathbb{P}\Big\{ \big\vert \theta^\T(\widehat{\Sigma} - \Sigma)\theta\big\vert>\frac{t^{1/2}}{\sqrt{n}}\sup_{\theta\in\Omega}K_2(\theta)+\frac{t^{3/4}}{n^{3/4}}\sup_{\theta\in\Omega}K_1(\theta)+\frac{t^{2/\alpha}}{n}\sup_{\theta\in\Omega}K_1(\theta)\Big\}\nonumber\\
   \le&\mathbb{P}\Big\{ \big\vert \theta^\T(\widehat{\Sigma} - \Sigma)\theta\big\vert>\frac{K_2(\theta)t^{1/2}}{\sqrt{n}}+\frac{K_{1}(\theta)t^{3/4}}{n^{3/4}}+\frac{K_{1}(\theta)t^{2/\alpha}}{n}\Big\}\le  e^{-c(\alpha)t},
\end{align}
where $c(\alpha)$ is a constant only depending on $\alpha$.

For the set $\Omega$, there exists an $\varepsilon$-net $\mathcal{N}$ of $\Omega
$ such that, for $\varepsilon\in (0, 1)$,
\begin{align}
    \vert \mathcal{N}\vert\le \binom{d}{k}(1+2/\varepsilon)^k\le (\frac{ed}{k})^k(\frac{3}{\varepsilon})^k\le (\frac{3ed}{\varepsilon k})^k.
\end{align}
Hence, for any $\theta\in\Omega$, there exists a $\theta^\prime\in \mathcal{N}$ satisfying
\begin{align}
    \big\vert \theta^\T(\widehat{\Sigma} - \Sigma)\theta\big\vert&\le \big\vert (\theta-\theta^\prime)^\T(\widehat{\Sigma} - \Sigma)\theta+\theta^{\prime\T}(\widehat{\Sigma} - \Sigma)(\theta-\theta^\prime)\big\vert+\big\vert \theta^{\prime\T}(\widehat{\Sigma} - \Sigma)\theta^\prime\big\vert\nonumber\\
    &\le 2\varepsilon\sup_{\tilde{\theta}\in \Omega}\big\vert \tilde{\theta}^\T(\widehat{\Sigma} - \Sigma)\theta\big\vert+\big\vert \theta^{\prime\T}(\widehat{\Sigma} - \Sigma)\theta^\prime\big\vert.\nonumber
\end{align}
Due to the symmetry of $\widehat{\Sigma} - \Sigma$, we have
\begin{align}
     \tilde{\theta}^\T(\widehat{\Sigma} - \Sigma)\theta=\frac{1}{4}\Big( (\tilde{\theta}+\theta)^\T(\widehat{\Sigma} - \Sigma)(\tilde{\theta}+\theta)-(\tilde{\theta}-\theta)^\T(\widehat{\Sigma} - \Sigma)(\tilde{\theta}-\theta)\Big).
\end{align}
Note that $\Vert\tilde{\theta}\pm \theta\Vert_2\le 2 $. Hence, we have
\begin{align}
    \sup_{\theta\in\Omega}\big\vert \theta^\T(\widehat{\Sigma} - \Sigma)\theta\big\vert\le 4\varepsilon \sup_{\theta\in\Omega}\big\vert \theta^\T(\widehat{\Sigma} - \Sigma)\theta\big\vert+ \sup_{\theta^\prime\in\mathcal{N}}\big\vert \theta^{\prime\T}(\widehat{\Sigma} - \Sigma)\theta^\prime\big\vert.
\end{align}
Taking $\varepsilon=1/16$, we have
\begin{align}
    &\mathbb{P}\Big\{\frac{3}{4} \sup_{\theta\in\Omega}\big\vert \theta^\T(\widehat{\Sigma} - \Sigma)\theta\big\vert>\frac{t^{1/2}}{\sqrt{n}}\sup_{\theta\in\Omega}K_2(\theta)+\frac{t^{3/4}}{n^{3/4}}\sup_{\theta\in\Omega}K_1(\theta)+\frac{t^{2/\alpha}}{n}\sup_{\theta\in\Omega}K_1(\theta)\Big\}\nonumber\\
    \le& \mathbb{P}\Big\{ \sup_{\theta^\prime\in\mathcal{N}}\big\vert \theta^{\prime\T}(\widehat{\Sigma} - \Sigma)\theta^\prime\big\vert>\frac{t^{1/2}}{\sqrt{n}}\sup_{\theta\in\Omega}K_2(\theta)+\frac{t^{3/4}}{n^{3/4}}\sup_{\theta\in\Omega}K_1(\theta)+\frac{t^{2/\alpha}}{n}\sup_{\theta\in\Omega}K_1(\theta)\Big\}\nonumber\\
    \le &\vert\mathcal{N}\vert 2e^{-c(\alpha)t}\le 2\exp(-c(\alpha)t+k\log(48ed/k))
    \end{align}
We conclude the proof by taking $t=(u+k\log(48ed/k))/c(\alpha)$.

\end{proof}

\subsection{Proof of Theorem \ref{Theo_application_low}}
\begin{proof}[Proof of Theorem \ref{Theo_application_low}]
    Let $Q=[Q_1, \cdots, Q_r]\in\mathbb{R}^{k\times r}$ be a random matrix with independent copies of $r^{-\frac{1}{2}}\delta\circ\xi$, where $\delta\sim \mathrm{Bernoulli}(p)$ and $\xi$ is a centered sub-gaussian random variable satisfying $\Vert \xi\Vert_{\Psi_2}\le 1$ and $\mathbb{E}\xi^2=1$. The random variable $\delta$ is independent of $\xi$ and $0\le p\le 1$.
    
    Let $X=U\Sigma V^\T$ be the thin singular value decomposition of $X$, where $U\in\mathbb{R}^{m\times k}, \Sigma\in \mathbb{R}^{k\times k}$ and $V\in \mathbb{R}^{n\times k}$. Define the following matrices
    \begin{align}
        U\Sigma^{1/2}=\widetilde{U}=[\tilde{u}^\T_1, \cdots, \tilde{u}^\T_m]^\T,\quad V\Sigma^{1/2}=\widetilde{V}=[\tilde{v}^\top_1, \cdots, \tilde{v}^\top_n]^\top.
    \end{align}
    Then we have
    \begin{align}
        \tilde{u}_i\tilde{v}^\top_{j}=X_{ij},\quad \mathbb{E}\tilde{u}_i Q Q^\T \tilde{v}^\T_j= p\cdot X_{ij},\nonumber
    \end{align}
    where $1\le i\le m$ and $1\le j\le n$. We rewrite $\tilde{u}_i Q Q^\T \tilde{v}^\T_j$ as a quadratic form
    \begin{align}
        \tilde{u}_i Q Q^\T \tilde{v}^\T_j=\mathrm{vec}(Q)^\T\cdot (I_{r}\otimes \tilde{u}_i^\T \tilde{v}_j)\cdot\mathrm{vec}(Q),
    \end{align}
    where $\mathrm{vec}(Q)=[Q^\T_1,\cdots, Q^\T_r]^\T\in \mathbb{R}^{kr}$ and $\otimes$ is the Kronecker product. Then, we have by Theorem \ref{Theo_main1}
    \begin{align}
        \mathbb{P}\Big\{\big\vert\tilde{u}_i Q Q^\T \tilde{v}^\T_j -p\cdot X_{ij}\big\vert>t  \Big\}&\le 2\exp\big(-c\min\{\frac{r^2t^2}{p\Vert I_{r}\otimes \tilde{u}_i^\T \tilde{v}_j\Vert_{F}^2 }, \frac{rt}{\Vert I_{r}\otimes \tilde{u}_i^\T \tilde{v}_j\Vert_{l_2\to l_2}} \}\big)\nonumber\\
        &\le 2\exp\big(-c\min\{\frac{rt^2}{p\Vert \tilde{u}_i\Vert_{2}^2\Vert \tilde{v}_j\Vert_{2}^2 }, \frac{rt}{\Vert \tilde{u}_i\Vert_{2}\Vert \tilde{v}_j\Vert_{2}} \}\big).
    \end{align}
    To obtain the first inequality, we use the following fact: $$p^2 \sum_{i\neq j} a_{ij}^2 + p\sum_i a_{ii}^2 \leq p\sum_{i,j}a_{ij}^2.$$ Hence, we have for $t>0$
    \begin{align}
        \mathbb{P}\Big\{\big\vert\tilde{u}_i Q Q^\T \tilde{v}^\T_j -p\cdot X_{ij}\big\vert>\Vert \tilde{u}_i\Vert_{2}\Vert \tilde{v}_j\Vert_{2}t  \Big\}\le \exp(-cr\min\{\frac{t^2}{p}, t \}).
    \end{align}
    Then by the union bound, we have 
    \begin{align}
        \mathbb{P}\Big\{\max_{i, j}\big\vert\tilde{u}_i Q Q^\T \tilde{v}^\T_j -p\cdot X_{ij}\big\vert>&\frac{kt}{\sqrt{mn}}\sqrt{\mu_\mathrm{col}\mu_\mathrm{row}}\cdot\Vert X\Vert_{l_2\to l_2}  \Big\}\\
        \le& mn\exp(-cr\min\{\frac{t^2}{p}, t \}).
    \end{align}
    Assume that $$r>\log \frac{mn}{\eta}\cdot\max\{\frac{p}{ct^2}, \frac{1}{ct}\},$$ then with probability at least $1-\eta$,  for $1\le i\le m$ and $1\le j\le n$
    \begin{align}
        \big\vert X_{ij}-\frac{1}{p}\tilde{u}_i Q Q^\T \tilde{v}^\T_j\big\vert \le\frac{kt}{p\sqrt{mn}}\sqrt{\mu_\mathrm{col}\mu_\mathrm{row}}\cdot\Vert X\Vert_{l_2\to l_2}.\nonumber
    \end{align}
    The desired result follows by setting $Y=(Y_{ij})$ such that $Y_{ij}=\frac{1}{p}\tilde{u}_i Q Q^\T \tilde{v}^\T_j$. 
\end{proof}

\paragraph{Acknowledgment}K. Wang was partially supported by Hong Kong RGC grant GRF 16304222. Y. Zhu was partially supported by an AMS-Simons Travel Grant and the Simons Grant MPS-TSM-00013944. This work was partially completed during the first author's visit to Professor Hanchao Wang at Shandong University. The first author would like to express sincere gratitude to Professor Wang for his valuable discussions and support during the visit, which significantly contributed to the development of the ideas presented in this paper.


\bibliographystyle{plain}
\bibliography{NewHW}


	

	
	
	



\appendix

\section{Proof of Proposition \ref{Lem_appendix_1}}\label{appendix_A}
\begin{proof}
	Let $\tilde{\eta}=(\tilde{\eta}_{1}, \cdots, \tilde{\eta}_{n})^{\top}$ be an independent copy of $\eta$. Lemma \ref{Lem_contraction_principle} and Remark \ref{Rem_contraction_principle} yield for $r\ge 1$
	\begin{align}
		\Vert \eta^\top A\eta-\mathbb{E}\eta^\top A\eta\Vert_{L_{r}}\asymp \Vert \eta^\top A\tilde{\eta}\Vert_{L_{r}}.\nonumber
	\end{align}
By virtue of Lemma \ref{Lem_Lp_quadratic_0_1}, we have for $r\ge 2$
\begin{align}
	\Vert \eta^\top A\tilde{\eta}\Vert_{L_{r}}\asymp_{\alpha}& \sqrt{r}\Vert A\Vert_{F}+r\Vert A\Vert_{l_{2}\to l_{2}}\nonumber\\
	&+r^{1/\alpha+1/2}\max_{i}\big(\sum_{j}a_{ij}^{2} \big)^{1/2}+r^{2/\alpha}\Vert A\Vert_{\infty}:=f_{1}(A, r).\nonumber
\end{align}
Hence, for $r\ge 2$, we have by the Paley-Zygmund inequality
\begin{align}
	\mathbb{P}\Big\{ \vert \eta^\top A\eta-\mathbb{E}\eta^\top A\eta\vert\ge\frac{C_{1}(\alpha)}{2} f_{1}(A, r)\Big\}&\ge \mathbb{P}\Big\{ \vert \eta^\top A\tilde{\eta}\vert\ge \frac{1}{2}\Vert \eta^\top A\tilde{\eta}\Vert_{L_{r}}\Big\}\nonumber\\
	&\ge (1-2^{-r})^{2}\Big( \frac{\Vert \eta^\top A\tilde{\eta}\Vert_{L_{r}}}{\Vert \eta^\top A\tilde{\eta}\Vert_{L_{2r}}}   \Big)^{2r}\ge \frac{1}{2}e^{-c_1(\alpha)r},\nonumber
\end{align}
where in the last inequality, we use Lemma~\ref{Lem_Lp_quadratic_0_1}.
If we lower bound $e^{-c_1(\alpha)r}$ by $e^{-c_1(\alpha)(r+2)}$, this inequality is valid for all $r\ge 0$. Hence, we have for $t\ge 0$
\begin{align}
		\mathbb{P}\Big\{ \vert \eta^\top A\eta-\mathbb{E}\eta^\top A\eta\vert\ge\frac{C_{1}(\alpha)}{2} f_{1}(A, t)\Big\}\ge \frac{1}{2}e^{-2c_{1}(\alpha)}e^{-c_{1}(\alpha)t},\nonumber
\end{align}
which immediately yields the desired result.
\end{proof}

\section{Proof of Theorem \ref{Theorem_application}}\label{appendix_application}
\begin{proof}[Proof of Theorem \ref{Theorem_application}]
    Without loss of generality, we assume $L(\alpha)=1$. Note that 
$
        \mathbb{E}\Vert A\xi\Vert_{2}^{2}=p\Vert A\Vert_{F}^2.
$  
Hence, Theorem \ref{Theo_main1} yields for $u>0$
\begin{align}\label{Eq_application_1}
    &\mathbb{P}\Big\{\Big\vert\Vert A\xi\Vert_{2}^{2}-p\Vert A\Vert_{F}^2 \Big\vert >t\Big\}\nonumber\\
    \le& 2\exp\Big\{-c(\alpha)\min\{\frac{t^2}{p\Vert A^\top A\Vert_{F}^2},\big(\frac{t}{\Vert A^\top A\Vert_{l_2\to l_2}}\big)^{\alpha/2} \} \Big\}.
\end{align}

Take $t=up\Vert A\Vert_{F}^2$, $u\ge 0$, and note that
\begin{align}
    \Vert A^\top A\Vert_{F}\le \Vert A\Vert_{l_2\to l_2} \Vert A\Vert_F,\quad \Vert A^\top A\Vert_{l_2\to l_2}=\Vert A\Vert_{l_2\to l_2}^2.
\end{align}
Then, \eqref{Eq_application_1} yields that
\begin{align}\label{Eq_application2}
    &\mathbb{P}\Big\{\Big\vert\Vert A\xi\Vert_{2}^{2}-p\Vert A\Vert_{F}^2 \Big\vert >up\Vert A\Vert_{F}^2\Big\}\nonumber\\
    \le& 2\exp\Big\{-c(\alpha)\min\{\frac{u^2p\Vert A\Vert_{F}^2}{\Vert  A\Vert_{l_2\to l_2}^2},\big(\frac{up\Vert A\Vert_{F}^2}{\Vert A\Vert_{l_2\to l_2}^2}\big)^{\alpha/2} \} \Big\}.
\end{align}

Let $u=max(\delta, \delta^{2}), \delta\ge 0$. Then, the event 
\begin{align}
    \Big\{\Big\vert\Vert A\xi\Vert_{2}^{2}-p\Vert A\Vert_{F}^2 \Big\vert \le up\Vert A\Vert_{F}^2\Big\}
\end{align}
implies the event
\begin{align}
    \Big\{\Big\vert\Vert A\xi\Vert_{2}-\sqrt{p}\Vert A\Vert_{F} \Big\vert \le \delta\sqrt{p}\Vert A\Vert_{F}\Big\},
\end{align}
due to the numeric bound $\max(\vert z-1\vert, \vert z-1\vert^{2})\le \vert z^2-1\vert, z\ge 0$. Note that $\delta^2=\min(u, u^2)$, then we have by \eqref{Eq_application2}
\begin{align}
    &\mathbb{P}\Big\{\Big\vert\Vert A\xi\Vert_{2}-\sqrt{p}\Vert A\Vert_{F} \Big\vert > \delta\sqrt{p}\Vert A\Vert_{F}\Big\}\nonumber\\
    \le& 2\exp\Big\{-c(\alpha)\min\{\frac{\delta^2p\Vert A\Vert_{F}^2}{\Vert  A\Vert_{l_2\to l_2}^2},\big(\frac{\delta^2p\Vert A\Vert_{F}^2}{\Vert A\Vert_{l_2\to l_2}^2}\big)^{\alpha/2} \} \Big\}.
\end{align}
We finish the proof by taking $t=\delta\sqrt{p}\Vert A\Vert_{F}$.

\end{proof}

\section{Proof of Corollary \ref{Cor_Section2_quadratic_forms}}\label{appendix_B}
\begin{proof}
We first consider the case $0<\alpha\le 1$. Note that 
\begin{align}
  \Vert A\Vert_{\infty} \le \Vert A\Vert_{l_{2}\to l_{\infty}} \le \Vert A\Vert_{l_{2}\to l_{2}}.\nonumber
\end{align}
Therefore, in this case, Corollary \ref{Cor_Section2_quadratic_forms} can be immediately deduced from Lemma \ref{Lem_Lp_quadratic_0_1}.

We next turn to the case $1\le\alpha\le 2$. In this case, note that 
	\begin{align}
		\Vert A\Vert_{l_{\alpha}\to l_{\alpha^{*}}}\le \Vert A\Vert_{l_{2}\to l_{\alpha^{*}}}\le \Vert A\Vert_{l_{2}\to l_{2}}.\nonumber
	\end{align}
Hence, to finish the proof, it is enough to prove for $1\le \alpha\le 2$
\begin{align}
	r^{1/\alpha}\Vert A\Vert_{l_{\alpha^{*}}(l_{2})}\le r^{1/2}\Vert A\Vert_{F}+r\Vert A\Vert_{l_{2}\to l_{\infty}}.\nonumber
\end{align}
Define the following set for $1\le \alpha\le 2$
\begin{align}
	I(r):=\{(x_{ij})=(z_{i}y_{ij})\in \mathbb{R}^{n\times n}: \sum_{i=1}^{n}\vert z_{i}\vert^{\alpha}\le r, \max_{i=1,\cdots, n}\sum_{j=1}^{n}y_{ij}^{2}\le 1    \}.\nonumber
\end{align}
We have the following relation
\begin{align}
	r^{1/\alpha}\Vert A\Vert_{l_{\alpha^{*}}(l_{2})}=\sup\big\{\sum_{i,j}a_{ij}x_{ij}:  (x_{ij})\in I(r)  \big \}.\nonumber
\end{align}
Indeed, on the one hand, we have
\begin{align}
	\sup_{(x_{ij})\in I(r)}\sum_{i,j} a_{ij}z_{i}y_{ij}\le \sup_{(z_{i})}\sum_{i}z_{i}\sup_{(y_{ij})}\sum_{j}a_{ij}y_{ij}\le r^{1/\alpha}\Vert A\Vert_{l_{\alpha^{*}}(l_{2})}.\nonumber
\end{align}
 On the other hand, letting 
 \begin{align}
 	y_{ij}=\frac{a_{ij}}{(\sum_{j=1}^{n}a_{ij}^{2})^{1/2}}, \quad z_{i}=\frac{r^{1/\alpha}(\sum_{j=1}^{n}a_{ij}^{2})^{(\alpha^{*}-1)/2}}{(\sum_{i=1}^{n}(\sum_{j=1}^{n}a_{ij}^{2})^{\alpha^{*}/2})^{1/\alpha}}\nonumber
 \end{align}
yields the inverse inequality. Define another subset of $\mathbb{R}^{n\times n}$
\begin{align}
	I_{1}(r):=\{(x_{ij})=(z_{i}y_{ij})\in \mathbb{R}^{n\times n}: \sum_{i=1}^{n}\min\{\vert z_{i}\vert^{\alpha}, z_{i}^{2}\}\le r, \max_{i=1,\cdots, n}\sum_{j=1}^{n}y_{ij}^{2}\le 1    \}.\nonumber
\end{align}
Obviously, $I(r)\subset I_{1}(r)$. Given any $(z_{i})$ and $(y_{ij})$ satisfying the conditions of $I_{1}(r)$, we have
\begin{align}
	\big\vert \sum_{i,j}a_{ij}z_{i}y_{ij}   \big\vert\le&\sum_{i=1}^{n}\vert z_{i}\vert\big( \sum_{j=1}^{n}a_{ij}^{2}  \big)^{1/2}\big( \sum_{j=1}^{n}y_{ij}^{2}  \big)^{1/2}\le \sum_{i=1}^{n}\vert z_{i}\vert\big( \sum_{j=1}^{n}a_{ij}^{2}  \big)^{1/2}\nonumber\\
	\le & \sum_{i=1}^{n}\vert z_{i}\vert\mathbb{I}_{\{\vert z_{i}\vert \le 1\}}\big( \sum_{j=1}^{n}a_{ij}^{2}  \big)^{1/2}+\sum_{i=1}^{n}\vert z_{i}\vert\mathbb{I}_{\{\vert z_{i}\vert> 1\}}\big( \sum_{j=1}^{n}a_{ij}^{2}  \big)^{1/2}\nonumber\\
	\le &\Vert A\Vert_{F} \big(\sum_{i=1}^{n} z_{i}^{2}\mathbb{I}_{\{\vert z_{i}\vert \le 1\}}\big)^{1/2}+\Vert A\Vert_{l_{2}\to l_{\infty}}\sum_{i=1}^{n}\vert z_{i}\vert\mathbb{I}_{\{\vert z_{i}\vert> 1\}}\nonumber\\
	\le &r^{1/2}\Vert A\Vert_{F}+r\Vert A\Vert_{l_{2}\to l_{\infty}}.\nonumber
\end{align}
This completes the proof. 
\end{proof}

\section{Proof of Lemma \ref{Prop_Section3_Bernstein_sparse}}\label{appendix_C}

\begin{proof}
	Let $\eta_{1}, \cdots, \eta_{n}\stackrel{i.i.d.}{\sim}\mathcal{W}_{s}(\alpha)$, $0<\alpha\le 1$. Note that, for $1\le i\le n$,
$$		-\log \mathbb{P}\{\vert\delta_{i}\eta_{i}\vert\ge t \} = 
		\begin{cases}
			0, & t = 0 \\
			t^{\alpha}-\log p_{i}, & t > 0
		\end{cases}
$$
is a concave function for $t\ge 0$. Hence, Lemma \ref{Lem_Lp_linearsum} yields for $r\ge 2$
\begin{align}\label{Eq_Section3_linearsum_proof1}
	\big\Vert \sum_{i}a_{i}\delta_{i}\eta_{i}\big\Vert_{L_{r}}\asymp_{\alpha} r^{1/\alpha}\big(\sum_{i}p_{i}\vert a_{i}\vert^{r} \big)^{1/r}+\sqrt{r}(\sum_{i}p_{i}a_{i}^{2})^{1/2}.
\end{align}
Here, we use the fact $\Vert \eta_{1}\Vert_{L_{r}}\asymp r^{1/\alpha}$. As for the first term on the right side of \eqref{Eq_Section3_linearsum_proof1}, we have for $r\ge 3$
\begin{align}
	\big(\sum_{i}p_{i}\vert a_{i}\vert^{r} \big)^{1/r}&\le \max_{i}\vert a_{i}\vert^{(r-2)/r}\cdot\big(\sum_{i}p_{i} a_{i}^{2} \big)^{1/r}\nonumber\\
	&= 	\big(e^{2r/(r-2)}\max_{i}\vert a_{i}\vert\big)^{(r-2)/r}\cdot \big(\sum_{i}p_{i} a_{i}^{2}e^{-2r} \big)^{1/r}\nonumber\\
	&\le \frac{r-2}{r}\big(e^{2r/(r-2)}\max_{i}\vert a_{i}\vert\big)+\frac{2}{r}\big(\sum_{i}p_{i} a_{i}^{2}e^{-2r} \big)^{1/2}\nonumber\\
	&\le e^{6}\Vert a\Vert_{\infty} +e^{-r}\big(\sum_{i}p_{i} a_{i}^{2} \big)^{1/2}.\nonumber
\end{align}
This result with \eqref{Eq_Section3_linearsum_proof1} yields for $r\ge 3$,
\begin{align}
	\big\Vert \sum_{i}a_{i}\delta_{i}\eta_{i}\big\Vert_{L_{r}}\lesssim_{\alpha}\sqrt{r}(\sum_{i}p_{i}a_{i}^{2})^{1/2}+r^{1/\alpha}\Vert a\Vert_{\infty}.\nonumber
\end{align}

For $t\ge 0$ and $1\le i\le n$, we have 
\begin{align}
	\mathbb{P}\{ \vert \delta_{i}\zeta_{i}\vert\ge L(\alpha)t  \}\le 2	\mathbb{P}\{ \vert \tilde{\delta}_{i}\eta_{i}\vert\ge t  \},\nonumber
\end{align}
where $\tilde{\delta}_{i}$ is an independent copy of $\delta_{i}$, $1\le i\le n$.
Hence, Lemma \ref{Lem_contraction_principle} yields for $r\ge 3$
\begin{align}
	\big\Vert \sum_{i}a_{i}\delta_{i}\zeta_{i}/L(\alpha)\big\Vert_{L_{r}}\lesssim \big\Vert \sum_{i}a_{i}\tilde{\delta}_{i}\eta_{i}\big\Vert_{L_{r}}\lesssim_{\alpha}\sqrt{r}(\sum_{i}p_{i}a_{i}^{2})^{1/2}+r^{1/\alpha}\Vert a\Vert_{\infty}.\nonumber
\end{align}
Then, using Lemma \ref{Lem_Moments_to_tailbound} and adjusting the universal constant yield the desired result.
\end{proof}

\section{Supplement to Theorem \ref{thm:RIP}}\label{appendix_e}
In this section, we shall give a detailed expansion of $\mathbb{E}\Vert B^\T\mathrm{Diag}(\delta^{(1)})A_{\theta, p}\mathrm{Diag}(\delta^{(1)})B\Vert_{F}^2$ appearing in Theorem \ref{thm:RIP}. For convenience, assume that $B=(b_{ij})_{d\times m}$ and $A_{\theta, p}=(a_{ij})_{d\times d}$.

Note that
\begin{align}
    \Vert B^\T\mathrm{Diag}(\delta^{(1)})A_{\theta, p}\mathrm{Diag}(\delta^{(1)})B\Vert_{F}^2=&\sum_{i=1}^{m}\sum_{j=1}^{m}\big(\sum_{l=1}^{d}\sum_{k=1}^{d}\delta_lb_{li}a_{lk}\delta_{k}b_{kj} \big)^{2}\nonumber\\
    =&\sum_{i=1}^{m}\sum_{j=1}^{m}\sum_{l=1}^{d}\sum_{p=1}^{d}\sum_{k=1}^{d}\sum_{q=1}^{d}\delta_{l}b_{li}a_{lk}\delta_{k}b_{kj}\delta_{p}b_{pj}a_{pq}\delta_{q}b_{qj}.
\end{align}
In order to calculate the expectation of the above random variables, we need to make the following classifications based on whether the indicators $l, p, k$, and $q$ are equal.
\begin{itemize}
    \item All indicators are different.
    \begin{align}
        \mathrm{I}_{1}=\{(l, p, k, q):1\le l\neq p\neq k\neq q\le d \}.
    \end{align}
    \item Only two indicators are equal.
    \begin{align}
        \mathrm{I}_{2}&=\{(l, p, k, q):1\le l= p\neq k\neq q\le d \},\quad
        \mathrm{I}_{3}=\{(l, p, k, q):1\le l\neq p= k\neq q\le d \},\\
        \mathrm{I}_{4}&=\{(l, p, k, q):1\le l\neq p\neq k= q\le d \},\quad
        \mathrm{I}_{5}=\{(l, p, k, q):1\le l=k\neq p\neq q\le d \},\\
        \mathrm{I}_{6}&=\{(l, p, k, q):1\le l=q\neq p\neq k\le d \},\quad
        \mathrm{I}_{7}=\{(l, p, k, q):1\le l\neq p=q\neq k\le d \}.
    \end{align}
    \item Only three indicators are equal.
    \begin{align}
        \mathrm{I}_{8}&=\{(l, p, k, q):1\le l= p= k\neq q\le d \},\quad
        \mathrm{I}_{9}=\{(l, p, k, q):1\le q= p= k\neq l\le d \},\\
        \mathrm{I}_{10}&=\{(l, p, k, q):1\le l= q= k\neq p\le d \},\quad
        \mathrm{I}_{11}=\{(l, p, k, q):1\le l= p= q\neq k\le d \}.
    \end{align}
    \item All indicators are equal.
    \begin{align}
        \mathrm{I}_{12}=\{(l, p, k, q):1\le l= p= q= k\le d \}.
    \end{align}
\end{itemize}
Hence, we have 
\begin{align}
    \mathbb{E}\Vert B^\T\mathrm{Diag}(\delta^{(1)})A_{\theta, p}\mathrm{Diag}(\delta^{(1)})B\Vert_{F}^2=\sum_{i=1}^{m}\sum_{j=1}^{m}\sum_{w=1}^{12}Q_{w}(i,j).
\end{align}
Here, 
\begin{align}
    Q_{1}(i,j)&=\sum_{(l, k, p, q)\in I_1}b_{li}\theta_{l}\theta_{k}b_{kj}b_{pj}\theta_p\theta_qb_{qj},\quad Q_{2}(i,j)=\sum_{(l, k, p, q)\in I_2}\frac{1}{p_l}b_{li}\theta_{l}\theta_{k}b_{kj}b_{pj}\theta_p\theta_qb_{qj},\\
    Q_{3}(i,j)&=\sum_{(l, k, p, q)\in I_3}\frac{1}{p_k}b_{li}\theta_{l}\theta_{k}b_{kj}b_{pj}\theta_p\theta_qb_{qj},\quad Q_{4}(i,j)=\sum_{(l, k, p, q)\in I_4}\frac{1}{p_k}b_{li}\theta_{l}\theta_{k}b_{kj}b_{pj}\theta_p\theta_qb_{qj},\\
    Q_{5}(i,j)&=\sum_{(l, k, p, q)\in I_5}b_{li}\theta_{l}\theta_{k}b_{kj}b_{pj}\theta_p\theta_qb_{qj},\quad Q_{6}(i,j)=\sum_{(l, k, p, q)\in I_6}\frac{1}{p_l}b_{li}\theta_{l}\theta_{k}b_{kj}b_{pj}\theta_p\theta_qb_{qj},\\
    Q_{7}(i,j)&=\sum_{(l, k, p, q)\in I_7}b_{li}\theta_{l}\theta_{k}b_{kj}b_{pj}\theta_p\theta_qb_{qj},\quad Q_{8}(i,j)=\sum_{(l, k, p, q)\in I_8}\frac{1}{p_l}b_{li}\theta_{l}\theta_{k}b_{kj}b_{pj}\theta_p\theta_qb_{qj},\\
    Q_{9}(i,j)&=\sum_{(l, k, p, q)\in I_9}\frac{1}{p_k}b_{li}\theta_{l}\theta_{k}b_{kj}b_{pj}\theta_p\theta_qb_{qj},\quad Q_{10}(i,j)=\sum_{(l, k, p, q)\in I_{10}}\frac{1}{p_k}b_{li}\theta_{l}\theta_{k}b_{kj}b_{pj}\theta_p\theta_qb_{qj},\\
    Q_{11}(i,j)&=\sum_{(l, k, p, q)\in I_{11}}\frac{1}{p_l}b_{li}\theta_{l}\theta_{k}b_{kj}b_{pj}\theta_p\theta_qb_{qj},\quad Q_{12}(i,j)=\sum_{(l, k, p, q)\in I_1}\frac{1}{p_l}b_{li}\theta_{l}\theta_{k}b_{kj}b_{pj}\theta_p\theta_qb_{qj}.
\end{align}

\end{document}